\documentclass[a4paper,7pt,oneside,onecolumn,number,preprint,centertitle]{elsarticle}

\usepackage{amsmath,amssymb,bm}
\usepackage{graphicx}
\usepackage{textcomp}
\usepackage{xcolor}
\usepackage{amsthm}
\usepackage{tikz}
\usetikzlibrary{positioning}
\usetikzlibrary{arrows.meta}

\usepackage{enumerate}
\usepackage{bbold}
\usepackage{epsfig}
\usepackage{multicol}
\usepackage{float}
\usepackage{hyperref}
\usepackage{multirow}
\usepackage{mathrsfs} %

\usepackage{algorithm}
\usepackage{algorithmicx}
\usepackage{algpseudocode}
\usepackage{flushend}

\usepackage{ifthen}
\DeclareMathAlphabet{\mbb}{U}{bbold}{m}{n} %
\newcommand \mb[1] {\ifthenelse{\equal{#1}{0}}{\mbb{0}}{\ifthenelse{\equal{#1}{1}}{\mbb{1}}{\mathbb{#1}}}} %

\renewcommand{\l}{\left}

\newtheorem{thm}{Theorem}
\newtheorem{prop}[thm]{Proposition}
\newtheorem{cor}[thm]{Corollary}
\newtheorem{rem}{Remark}
\newtheorem{lem}[thm]{Lemma}
\newtheorem{ass}{Assumption}
\newtheorem{defn}{Definition}
\newtheorem{prob}{Problem}

\newtheorem{exmp}{Example}%

\renewcommand{\textcolor}[2]{#2}
\begin{document}
\begin{frontmatter}

\title{$L_1$ Minimum Energy Estimation of \\ Positive Systems}

\author[inst1]{Alba Gurpegui Ramón}
    \ead{alba.gurpegui_ramon@control.lth.se}
    
\affiliation[inst1]{organization={Department of Automatic Control, Lund University},%
            addressline={Ole Römers väg 1}, 
            city={Lund},
            postcode={223 63}, 
            country={Sweden}}

\begin{abstract}
We present a novel class of minimum energy estimation problems for
positive linear systems with linear costs subject to coupled process
and measurement disturbances, where the latter is bounded by the state and the former is bounded by the noise. Through dynamic programming we show that the problem admits explicit solutions. Remarkably, the Luenberger observer structure of the optimal estimator emerges from the optimization itself, with a switching gain that inherits the sparsity of the disturbance constraints, and it is obtained from a differential equation in the finite horizon case. In the infinite horizon case, an algebraic equation characterizes the limit of the finite-horizon solution and yields a stationary estimator with a static gain. We introduce two distinct notions of detectability to characterize when the static gain stabilizes the error dynamics while preserving their positivity, and analyze under which conditions the stationary solution can be formulated as a linear program. We further establish that a disturbance perturbation of finite $L_1$-norm yields an estimation error of finite $L_1$-norm. The framework is illustrated on a large-scale irrigation network.
\footnotetext{
This work is partially funded by the Wallenberg AI, Autonomous Systems and Software Program (WASP) funded by the Knut and Alice Wallenberg Foundation and the European Research Council (ERC) under grant agreement No 101199738.}
\end{abstract}

\begin{keyword}
Optimal Estimation \sep Positive Systems \sep Dynamic Programming \sep Large-scale Systems.
\end{keyword}

\end{frontmatter}

\begin{abstract}
We present a novel class of minimum energy estimation problems for
positive linear systems with linear costs subject to coupled process
and measurement disturbances, where the latter is bounded by the state and the former is bounded by the noise. Through dynamic programming we show that the problem admits explicit solutions. Remarkably, the Luenberger observer structure of the optimal estimator emerges from the optimization itself, with a switching gain that inherits the sparsity of the disturbance constraints, and it is obtained from a differential equation in the finite horizon case. In the infinite horizon case, an algebraic equation characterizes the limit of the finite-horizon solution and yields a stationary estimator with a static gain. We introduce two distinct notions of detectability to characterize when the static gain stabilizes the error dynamics while preserving their positivity, and analyze under which conditions the stationary solution can be formulated as a linear program. We further establish that a disturbance perturbation of finite $L_1$-norm yields an estimation error of finite $L_1$-norm. The framework is illustrated on a large-scale irrigation network.
\end{abstract}
\section{Introduction}
The full state of a dynamical system is rarely available for measurement, and its reconstruction from noisy, partial observations is a fundamental problem in control and signal processing. In minimum energy estimation, the estimate is the state produced by the least-energy disturbance and noise signals consistent with the model and the observed output. Introduced by Mortensen~\cite{Mortensen} and refined by Hijab~\cite{Hijab}, this approach recasts estimation as an optimal control problem whose value function satisfies a Hamilton–Jacobi–Bellman (HJB) equation. It lies between two classical traditions. Stochastic filtering originates in the least squares methods of Legendre and Gauss~\cite{Legendre, Gauss}, continued with the work of Kolmogorov and Wiener~\cite{Kolmogorov, Wiener}, and culminated in the Kalman filter~\cite{Kalman}, with which minimum energy estimation coincides in the linear Gaussian case~\cite{Willems}. Robust estimation, notably $H_{\infty}$ estimation~\cite{Nagpal, ShakedTheodor}, instead protects against the least favorable disturbance without assuming its statistics, at a possible cost in nominal performance~\cite{Simon}. Overviews are given in~\cite{Simon, Kailath}.

The disturbance energy is most often measured by a quadratic cost, which yields a Riccati equation and a linear filter \cite{Goodwin, AndersonMoore}. This quadratic structure is analytically convenient, but the number of unknown parameters in the Riccati equation grows quadratically with the state dimension and the resulting gain is generally dense. In this paper, we instead adopt a linear measure of disturbance energy.
This is consistent with the physical notion of energy as the time integral of power. In an electrical circuit, for instance, power is the product of current $I$ and voltage $V$, giving the total energy $E=\int I^{\top}V\,dt$ \cite[Sec. 1.1]{van2014port}, which is linear in each factor and becomes quadratic only after imposing Ohm's law, $V=RI$. Our cost has an analogous form which pairs each disturbance linearly with a weight, rather than penalizing it quadratically. We refer to its time integral as the disturbance energy. Such a cost is natural when disturbance magnitude accumulates additively over time as in the case of cumulative unknown inputs, persistent nonnegative perturbations, or a total disturbance budget. Linear or $L_1$-induced performance measures have been studied extensively for positive systems, where stability and induced-norm analysis admit exact linear-programming characterizations \cite{Briat, Ebihara}. Positive systems are characterized by the property that their state and output remain nonnegative for any nonnegative input and initial state. This type of dynamics captures a wide range of physical phenomena, and has thus been the subject of substantial control-theoretic research. Classical references on this topic involve the well-established nonnegative matrix theory pioneered by Perron \cite{Perron} and Frobenius \cite{Frobenius}, summarized in standard textbooks \cite{BermanPlemmons, fiedler1986}. A primary advantage of positive systems is that stability can be certified using linear Lyapunov functions \cite{Farina, Blanchini_lyapunov, blanchini_lyapunov2}. This is especially important in the large-scale setting because of the resulting computational scalability \cite{EbiharaPeucelleArzelierTAC2017, tutorial}. 

Estimation for positive systems has been approached in several ways. Positive observers, whose estimates remain nonnegative, were introduced for compartmental systems \cite{vandenHof1998} and characterized by linear programming for general positive systems, where the positivity constraint is shown to be restrictive, as an unstable positive system cannot be stabilized by a positive observer \cite{AitRami2011}. Building on the monotonicity of positive dynamics, interval observers instead track the state through certified upper and lower bounds with positive error dynamics~\cite{Khan2021, ZhangShen2023} and it was recently shown that structuring a positive Luenberger observer as monotonically converging bounds yields stabilizing observer feedback for a broader class of positive systems than earlier designs~\cite{OhlinRantzerTegling2026}. A separate line of research minimizes the $L_1$-induced gain from the disturbance to the estimation error, posing the observer design as a linear program through linear Lyapunov functions \cite{ChenFilter}. These approaches either constrain the observer to be positive or seek to bound its disturbance-to-error gain, rather than deriving the estimator from an optimality principle. In contrast, we formulate the estimation problem as a minimum-energy optimal control problem with linear costs, from which both the observer and the optimal disturbance are obtained explicitly. Crucially, we do not require the observer itself to be positive; instead, the structure of the problem ensures the nonnegativity of the state and the estimation error.

This work is inspired by the linear regulator framework for positive systems, for which the Bellman equation admits an explicit solution whose number of unknowns grows linearly with the state dimension \cite{TAC_paper, Rantzer2022, yuchao}, and the optimal feedback policy inherits its sparsity from the problem constraints. In this paper we develop an estimation counterpart of this line of work, an $L_1$-minimum energy estimation problem, in which the measurements processed by the estimator are corrupted by state-dependent disturbances, and the process disturbance and measurement noise satisfy linear elementwise bounds. 

The main contributions of this manuscript are as follows:
\begin{enumerate}[C1]
    \item We introduce the $L_1$-minimum energy estimation ($L_1$-MEE) problem for positive systems, characterized by a linear time-invariant positive system and linear costs on the process disturbance and the measurement noise, which are state-dependent, the process disturbance being elementwise bounded by the measurement noise and the measurement noise by the state. 

    \item We show that the finite-horizon $L_1$-MEE problem is solved by a backward differential equation, which yields a Luenberger observer with a switching gain (Section~\ref{subsec_finite_hor}).

    \item In the infinite horizon, we prove that the costate of the finite-horizon differential equation converges as the horizon grows if and only if an algebraic equation has a minimal nonnegative solution, which can be computed by value iteration. This solution yields a stationary estimator with a static gain. We further give a  sufficient condition under which the stationary estimator is optimal (Section~\ref{subsec_inf_hor}).

    \item We introduce positive zero-state detectability and H-detectability, and use them to give \textit{a posteriori} and \textit{a priori} conditions under which the stationary gain stabilizes the error dynamics while preserving their positivity (Section~\ref{Stab_detect_section}).
    
    \item We give a linear-programming formulation of the algebraic equation, analyze its primal–dual structure, and relate boundedness of the program to the existence of a stabilizing observer gain (Section~\ref{Section_LP}).
    
    \item We analyze the robustness of the stationary estimator to deviations of the realized disturbance from the disturbance associated with its gain, showing that a perturbation of finite $L_1$-norm keeps the estimation error bounded-input bounded-output stable in the $L_1$ sense (Section~\ref{section_BIBO}). 
    
    \item We illustrate the framework on a large-scale irrigation network, demonstrating the scalability achieved by the linear cost and the positivity structure in Section~\ref{sect_exmpl_scal}.
\end{enumerate}
Although the proposed framework shares structural similarities with the linear regulator problem for positive systems~\cite{TAC_paper}, the estimation problem is not obtained by applying the regulator results to the error dynamics. The disturbance class introduced in C1 bounds the process disturbance in proportion to the measurement noise and the measurement noise by the state. This coupling has a natural interpretation, since in the Kalman filter the gain also grows with the ratio of process noise to measurement noise. In C2, the state is unknown and the optimization is over disturbances consistent with the measurements. The value function is  affine rather than linear, with an extra term driven by the measured data. The optimal estimator turns out to be a Luenberger observer whose gain emerges from the optimal reconstruction of the disturbance rather than being designed. The same Luenberger structure, with a static gain, is retained by the stationary estimator of C3, obtained from an algebraic equation that can be computed by value iteration and, as in the regulator case, admits a linear programming formulation (C5) under detectability conditions (C4).  Finally, since the realized disturbance generally differs from the reconstructed one, C6 establishes a robustness property specific to estimation.
\vspace{-3mm}
\subsection{Notation} Let $\mathbb{R}_{+}$ denote the set of nonnegative real numbers. The inequality $X > Y$ $(X \geq Y)$ means all elements of the matrix $X-Y$ are positive (nonnegative). The notation $\left | X \right |$ means elementwise absolute value. $\mathbb 1$ and $\mathbb 0$ denote the column vectors with all entries equal to one and zero respectively. For a square matrix $X$, the spectral abscissa $\alpha(X)$ is the largest real part of its eigenvalues, and $X$ is Hurwitz if $\alpha(X)<0$. A square matrix is Metzler if all its off-diagonal entries are nonnegative. For $x \in \mathbb R$, $\lceil x \rceil$ denotes the smallest integer greater than or equal to $x$. The signum of a scalar is defined as the set-valued map 
\begin{align*}
    \operatorname{sign}(x) = 
    \scalebox{.9}{\ensuremath{
    \begin{cases}
        \{+1\} & \text{if } x > 0\\
        \{-1\} & \text{if } x < 0\\
        [-1,+1] & \text{if } x = 0
    \end{cases}}}
\end{align*}
and is extended to vectors in an element-wise fashion.
\section{Problem Set Up}
Consider the linear time-invariant (LTI) system
\begin{align}\label{cont_dyn}
    \dot x(t)&=Ax(t)+Bu(t)+ w(t)\\
    y(t)&=Cx(t)+v(t) \notag
\end{align}
where $x(t) \in \mathbb R^n_+$ is the state variable, $u(t)\in \mathbb R^l$ the control input, $y(t) \in \mathbb R^m$ is the measured output, and $w(t)\in \mathbb R^n$ and $v(t)\in \mathbb R^m$ are unknown process disturbance and measurement noise, respectively. The matrices $A\in \mathbb R^{n\times n}$, $B \in \mathbb R^{n \times l}$, $C \in \mathbb R^{m\times n}_+$ are known. The disturbances $w$ and $v$ are restricted to the state-dependent admissible sets 
\begin{align}\label{admissible_w}
    \mathcal W(v)&=\big\{w \in \mathbb R^n \hspace{.5mm}:\hspace{1mm} |w|\leq Hv\big\}\\
    \mathcal V(x)&=\big\{v \in \mathbb R^m \hspace{.5mm}:\hspace{1mm} v\geq 0, \hspace{1mm} v\leq Cx \big\}.
    \label{admissible_v}
\end{align}
with $H \in \mathbb R^{n\times m}_+$ a matrix determined by the problem designer.
The constraint $|w|\leq Hv$ ties the admissible process-disturbance magnitude to the measurement noise through the design matrix $H$, so that process disturbances are admitted only in proportion to the sensor noise, and the bound $v\leq Cx$ ties the measurement noise to the state. Deterministic, state-dependent disturbance bounds of this kind are an established modeling device, studied through robust invariance~\cite{SchaichCannon2015}. The matrix $H$ sets, channelwise, how much process disturbance is admissible per unit of measurement noise, and may be chosen small or large to reflect weak or strong coupling. The upper bound $v \leq Cx$  models measurement noise whose magnitude scales with the state, as in range-based sensing where accuracy degrades with distance~\cite{Takeuchi1985}. Since $v\leq Cx$ and $|w|\leq Hv$ with $H,C\geq 0$, the process disturbance also inherits a state-dependent bound $|w|\leq Hv\leq HCx$, so both disturbances are ultimately governed by the state and vanish together as the state vanishes.
\begin{rem}
    The matrix $H$ specifies the topology and strength of the coupling between process and measurement disturbances. Componentwise,
    \begin{align*}
        |w_i(t)|\leq \sum_{j=1}^m H_{ij}v_j(t)
    \end{align*}
    so diagonal, sparse, block-structured, or dense $H$ represent local, grouped, or network couplings, each entry scaling the admissible process disturbance relative to the sensor noise. Since the bound $|w|\leq Hv \leq HCx$ forces $w$ to vanish wherever the state does, it targets state-dependent couplings such as reaction or compartmental networks in which the disturbance magnitude decays with the state. The coupling also has a filtering interpretation. In the classical Kalman filter, the gain grows with the ratio of process noise to measurement noise, so for fixed $H$ a large $w$ with small $v$ would demand a large gain. The bound $|w|\leq Hv$ ties the admissible process disturbance to the sensor noise, preventing $w$ from dominating $v$ by more than the factor $H$ and keeping the gain bounded, $|L|\leq H$. 
\end{rem}
Process disturbances whose magnitude scales with the state are likewise common in positive systems. In stochastic reaction networks and compartmental epidemic models the intrinsic fluctuations are state-dependent, with magnitude set by the species or compartment counts \cite{ReactionNoise, EpidemicNoise}. More generally, state-dependent noise intensity is a classical object in stochastic control~\cite{Wonham1967, McLane1969}. These models are stochastic, driven by a random process with state-dependent intensity. Our formulation is deterministic, it admits any signal whose realization respects the state-dependent bound and is consistent with the output, without prescribing a distribution. 

Several of our results rely on the following assumption.
\begin{ass}\label{ass}
\begin{subequations}\label{cond:ass}
Consider $\psi \in \mathbb R^n$, $\theta \in \mathbb R^m$, $A \in \mathbb R^{n\times n}$, $C\in \mathbb R^{m\times n}_+$,  $H\in \mathbb R^{n\times m}_+$, $x(-T)\in \mathbb R^n_+$ and $ \chi \in \mathbb R^n$. The following holds:
    \begin{align}  
        &A-HC \hspace{1mm} \text{is a Metzler matrix,}\label{cond:ass:A-HC}\\
        &\theta\geq H^{\top}|\psi|,\label{cond:ass:r-E.Ts}\\
        &C\in \mathbb R^{m\times n}_+ , \hspace{1mm} H\in \mathbb R^{n\times m}_+, \hspace{1mm} H \text{ has non-zero columns,} \label{cond:ass:E}\\
        &x(-T)\geq \chi. \label{cond:ass:x(-T)}
    \end{align}
\end{subequations}
\end{ass}
We next formulate the estimation problem. Given the measured output $y(\cdot)$ and input $u(\cdot)$ over a time interval $[-T, 0]$, $T \in \mathbb R_+$, the goal is to reconstruct the state by selecting the process disturbance $w(\cdot)$ that best explains the data under the admissible-set constraints, at minimum weighted cost. A candidate disturbance explains the data when the induced measurement noise is consistent with the output, $v=y-Cx$, and both signals are admissible, $w\in \mathcal W(v)$ and $v \in \mathcal V(x)$. No distribution is assumed on the disturbances. Any signal, be it deterministic or a realization of a stochastic process, is admitted provided it satisfies these two requirements.
\begin{prob}[$L_1$-MEE Problem]\label{problem_L1MEE}
    Let $A\in \mathbb R^{n \times n}$, $B\in \mathbb R^{n \times l}$, $C\in \mathbb R^{m\times n}_+$, $\psi \in \mathbb R^n$, $\theta \in \mathbb R^{m}$. Suppose Assumption~\ref{ass} holds. Given the signals $u(\cdot)$, $y(\cdot)$ on $[-T, 0]$, and the initial state $x(-T)\in \mathbb R^n_+$, find $w, x: [-T, 0]\rightarrow \mathbb R^n$ and $v:[-T, 0]\rightarrow \mathbb R^m$ minimizing 
    \begin{align}\label{cost_cont_fin}
    \int_{-T}^0 \left[ \psi^{\top}w(\tau) + \theta^{\top}v(\tau)\right] d\tau 
    \end{align}
    subject to~\eqref{cont_dyn},~\eqref{admissible_w} and~\eqref{admissible_v} for all $\tau \in [-T, 0]$. The cost~\eqref{cost_cont_fin} is a weighted $L_1$-functional, linear in the disturbances. We refer to~\eqref{cost_cont_fin} as the disturbance energy.
\end{prob}
Solving Problem~\ref{problem_L1MEE} yields an optimal disturbance signal $w^*(\cdot)$. The corresponding state trajectory of~\eqref{cont_dyn} is the reconstruction of $x$ produced by this least-cost disturbance. We denote it $\bar x: [-T, 0]\rightarrow \mathbb R^n$, initialized at $\chi$, and refer to it as the \textit{$L_1$-minimum energy estimator} of $x$. Theorem~\ref{MEE_LR_fin} shows that $\bar x$ is generated by a Luenberger observer. As shown in Remark~\ref{rem-error}, under the optimal disturbance, the resulting error $e(t)=x(t)-\bar x(t)$ is nonnegative, so $\bar x$ is a certified lower reconstruction of the state, in the spirit of the lower estimate of an interval observer. Problem~\ref{problem_L1MEE} is inspired by the minimum-energy estimation problem in~\cite[Sec. 23.2.1]{hespanha}, adapted to linear costs and positive dynamics, and by the optimization-based approach to constrained estimation in~\cite{Goodwin}. Beyond its physical motivation, the linear and state-proportional form of the admissible sets is what makes the problem tractable, since it keeps the cost linear, renders the error dynamics Metzler, and keeps the positive orthant forward-invariant, so that the estimation problem inherits the explicit, linear  structure of the corresponding regulator problem~\cite{TAC_paper}. 
\begin{rem}
    Equation~\eqref{cond:ass:A-HC} ensures that the positive orthant is forward invariant under the system dynamics. Equation~\eqref{cond:ass:r-E.Ts} ensures that  the cost function is bounded from below. Equation~\eqref{cond:ass:E} ensures that $Hv\geq 0$ so that $\big\{ w :|w|\leq Hv\big\}$ is nonempty and well defined for $x \geq 0$ and the column condition excludes measurement channels that do not affect the admissible disturbances. Equation~\eqref{cond:ass:x(-T)} ensures the initial error is nonnegative $e(-T)=x(-T)-\chi \geq 0$. This condition is always satisfiable, since the state is nonnegative, $x(-T)\geq 0$, so that $\chi =0$ is always admissible. A tighter known lower bound on the initial state may be used in its place when available.
\end{rem}
\vspace{-5mm}
\section{Main Results}
\subsection{Finite Horizon}\label{subsec_finite_hor}
We apply dynamic programming to solve Problem~\ref{problem_L1MEE} in finite horizon. Theorem~\ref{MEE_LR_fin} shows that the resulting optimal estimator is a Luenberger observer, whose switching  gain inherits the sparsity of the disturbance constraints, and it gives the optimal disturbance and the optimal cost explicitly. 
\begin{thm}\label{MEE_LR_fin}
    Consider $A \in \mathbb R^{n\times n}$, $B\in \mathbb R^{n\times l}$, $\psi \in \mathbb R^n$, $C\in \mathbb R^{m\times n}_+ , \hspace{1mm} H\in \mathbb R^{n\times m}_+$, $\theta \in \mathbb R^m$ and $\chi \in \mathbb R^n$. Suppose Assumption~\ref{ass} holds. Let $q:[-T, 0]\rightarrow \mathbb R^n_+$ be the unique solution to the backward ordinary differential equation
    \begin{align}\label{ODE}
            -\dot q(t)^{\top}&=q(t)^{\top}A+\theta^{\top}C-|\psi-q(t)|^{\top}HC\\
            q(0)&= 0.\notag
        \end{align}
    For any gain
    \begin{align}
        L(t)&\in-\mathrm{diag}(\mathrm{sign}(\psi -q(t)))H, \label{eq:sol:L_t}
    \end{align}
    denote by $\bar x : [-T, 0] \rightarrow \mathbb R^n$ the solution to the differential equation 
    \begin{align}
        \dot{\bar x}(t)&=A\bar x(t)+Bu(t)+L(t)(y(t)-C\bar x(t)) \label{eq:sol:x}
    \end{align}
    from an initial condition $\bar x(-T)=\chi$. Then $\bar x$ is an $L_1$-minimum energy estimator of $x$. Moreover, define
    \begin{align}
        -\dot c(t)&=(\theta^{\top}+\psi^{\top}L(t))(y(t)-C\bar x(t)) \label{eq:sol:c}\\
        c(0)&=0. \label{eq:sol:cT}
    \end{align}
     The optimal cost of Problem~\ref{problem_L1MEE} is given by $$q(-T)^{\top}(\chi -x(-T))+c(-T),$$ and is attained by the optimal disturbance $w^*(t)=L(t)v(t)$.
\end{thm}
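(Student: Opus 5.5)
The plan is a verification (completion-of-squares) argument for the candidate cost-to-go $V(t)=-q(t)^{\top}e(t)+c(t)$, where $e=x-\bar x$ is the estimation error. First I would settle well-posedness. The right-hand side of \eqref{ODE} is globally Lipschitz in $q$, since $|\psi-q|$ is $1$-Lipschitz, so a unique solution exists on $[-T,0]$. To show $q(t)\geq 0$ I would use a comparison argument in reversed time $s=-t$. There, $q$ obeys $\tfrac{d}{ds}q^{\top}=q^{\top}A+\theta^{\top}C-|\psi-q|^{\top}HC$ with $q=0$ at $s=0$. At a boundary point with $q_i=0$ and $q\geq 0$, the $i$-th component of the right-hand side is $\sum_{j\neq i}q_jA_{ji}+\big(\theta^{\top}C-|\psi-q|^{\top}HC\big)_i$. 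I would bound $|\psi-q|\leq|\psi|+q$ and use \eqref{cond:ass:r-E.Ts} together with $C,H\geq 0$. This reduces nonnegativity of that component to the off-diagonal entries of $A-HC$ being nonnegative, which is \eqref{cond:ass:A-HC}. The orthant is then forward invariant in $s$.

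Next, fix any admissible triple $(w,x,v)$ with $v=y-Cx$. Note that $\bar x$ and $c$ depend only on the data $u,y,\chi$ and on $L$, not on the candidate $w$. Subtracting \eqref{eq:sol:x} from \eqref{cont_dyn} and using $y-C\bar x=v+Ce$ gives $\dot e=Ae+w-Lv-LCe$. Differentiating $V$ and substituting \eqref{ODE} and \eqref{eq:sol:c}, I would use the key identity coming from \eqref{eq:sol:L_t}:
\begin{align*}
(\psi-q)^{\top}L=-|\psi-q|^{\top}H .
\end{align*}
With it, all terms proportional to $Ce$ cancel, and one obtains
\begin{align*}
\dot V+\psi^{\top}w+\theta^{\top}v=(\psi-q)^{\top}w+|\psi-q|^{\top}Hv\;\geq\;0,
\end{align*}
where the inequality uses $|w|\leq Hv$. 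Equality holds for $w=Lv$, again by the identity above.

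Integrating over $[-T,0]$ with $q(0)=0$ and $c(0)=0$ shows that every admissible triple has cost at least $V(-T)=q(-T)^{\top}(\chi-x(-T))+c(-T)$. Equality holds exactly when $w=Lv$ almost everywhere. This gives the stated optimal cost and the optimal disturbance $w^*(t)=L(t)v(t)$. The reconstruction under $w^*$ is the trajectory of \eqref{eq:sol:x}, so $\bar x$ is an $L_1$-minimum energy estimator.

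It remains to check that $w^*$ is admissible. Since $|L|\leq H$, we get $|w^*|\leq Hv$. For $v\in\mathcal V(x)$, observe that under $w^*$ the error obeys $\dot e=(A-LC)e$. For $i\neq j$ we have $(-LC)_{ij}\geq(-HC)_{ij}$, so $A-LC$ is Metzler by \eqref{cond:ass:A-HC}. Together with $e(-T)\geq 0$ from \eqref{cond:ass:x(-T)}, this yields $e\geq 0$, i.e. $\bar x\leq x$.

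I expect the main obstacle to be this last admissibility step and the nonnegativity of $q$, rather than the algebra. The verification identity is mechanical once $L$ is chosen from the sign set. However, I must make sure the minimizing signal is genuinely feasible, which relies on the Metzler structure of $A-LC$ for every selection $L(t)$, including the switching and set-valued cases where $\psi_i=q_i(t)$. I must also handle measurability of the selection $L(\cdot)$, for which I would take a measurable selection of the sign map.
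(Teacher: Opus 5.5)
Your verification core is correct and is essentially the paper's proof. The paper obtains the same affine candidate $V(t,\xi)=q(t)^{\top}(\bar x(t)-\xi)+c(t)$ by matching the $\xi$-coefficients in the HJB equation. It then integrates the same pointwise inequality $-\tfrac{d}{dt}V\le\psi^{\top}w+\theta^{\top}v$. Your direct differentiation of $-q^{\top}e+c$, using $(\psi-q)^{\top}L=-|\psi-q|^{\top}H$, is that verification without the HJB detour. Your boundary check for $q\ge 0$ is a valid substitute for the monotonicity argument of Lemma~\ref{lem_monotone}.

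The step that fails is the admissibility paragraph.
\begin{itemize}
\item Showing $e=x^*-\bar x\ge 0$ only yields $v^*=y-Cx^*\le y-C\bar x$. It gives neither $v^*\ge 0$ nor $v^*\le Cx^*$.
\item Your claim $|w^*|\le Hv^*$ from $|L|\le H$ silently uses $v^*\ge 0$. Since $|Lv^*|=|D|\,|Hv^*|$, what is actually needed is at least $Hv^*\ge 0$.
\end{itemize}
These are state constraints on the trajectory $x^*$ generated from the given $x(-T)$ by the feedback $w^*=L(y-Cx^*)$. They depend on the data $y$ and do not follow from the Metzler structure of $A-LC$.

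They can in fact fail. Take $n=m=1$, $A=0$, $B=0$, $C=H=1$, $\psi=-1$, $\theta=2$, so that $q(t)=1-e^{t}$ and $L\equiv 1$. Suppose the true signals are $v=x/2$, $w=-v$ on an initial interval and $v=w=0$ afterwards; these are admissible, since $0\le v\le x$ and $|w|\le v$. Then $d=x-x^*$ obeys $\dot d=-d-2v$ with $d(-T)=0$, so $d<0$ on the second interval. There $v^*=v+d=d<0$, and hence $|w^*|>Hv^*$, so $w^*$ is infeasible.

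The paper does not resolve this either: its HJB minimizes only over $|w|\le H(y-C\xi)$, treating $\mathcal V(x)$ as non-binding, and attainment is simply asserted. What your argument does prove rigorously is the lower bound $q(-T)^{\top}(\chi-x(-T))+c(-T)$ on the cost of every admissible triple, since it uses only $|w|\le Hv$. Attainment by $w^*=Lv$ requires the extra hypothesis $0\le y-Cx^*\le Cx^*$ along the $w^*$-trajectory. You should state this as an assumption rather than claim to derive it.

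A minor point: equality in your inequality holds for every selection $w\in-\mathrm{diag}(\mathrm{sign}(\psi-q))Hv$, not exactly when $w=Lv$.
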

\begin{rem}\label{rem-error}
    The estimator~\eqref{eq:sol:x} is a Luenberger observer. Unlike positive-observer designs, which constrain the estimate $\bar x$ to remain nonnegative, here no positivity is imposed on the observer. What remains nonnegative is the state $x$ and the estimation error  $e(t)=x(t)-\bar x(t)$. In the absence of control $(u=0)$, the state stays nonnegative because $A$ is Metzler and the admissible disturbances satisfy $|w|\leq Hv\leq HCx$. The error dynamics satisfy 
    $$ \dot e(t)= (A-L(t)C)e(t)+w(t)-L(t)v(t)$$ and under the optimal disturbance $w^*(t)=L(t)v(t)$ reduce to the autonomous system $\dot e(t)= (A-L(t)C)e(t)$. Since $|L(t)|\leq H$ and $A-HC$ is Metzler by Assumption~\ref{ass}~\eqref{cond:ass:A-HC}, the matrix $A-L(t)C$ is also Metzler, so with $e(-T)\geq 0$  the error remains nonnegative and decays asymptotically under detectability properties (see Section~\ref{Stab_detect_section}).
\end{rem}
\begin{rem}\label{rem_K}
    Because $\operatorname{sign}(\cdot)$ is set-valued, the right-hand side of~\eqref{eq:sol:L_t} is in general a set of gains, all achieving the optimal cost. Every diagonal $D=\operatorname{diag}(d)$ with
    \begin{align*}
        d_i\in\operatorname{sign}(\psi_i-q(t)_i), \quad i=1,\dots,n,
    \end{align*}
    yields an admissible gain $L(t)=-DH$. They can switch only when $\psi_i-q(t)_i=0$. Different choices of $D$ may produce different estimator trajectories $\bar x$ generated by~\eqref{eq:sol:x}, as Example~\ref{exmp_1} illustrates, yet all produce the same solution $q(t)$ of~\eqref{ODE} and hence the same optimal value.
\end{rem}
To prove Theorem~\ref{MEE_LR_fin} we use the following Lemma.
\begin{lem}\label{lem_monotone}
    Assume that Assumption~\ref{ass} holds. Then the solution to the differential equation~\eqref{ODE} is monotone for all nonnegative terminal conditions. That is, if $q_i$, $i=1,2$ satisfy the differential equations~\eqref{ODE} and $0 \leq q_1(t_q)\leq q_2(t_q)$, at some $t_q \in [-T, 0]$ then $q_1(\tau_q)\leq q_2(\tau_q)$, for all $\tau_q \leq t_q$. Moreover, if $-\dot{q}(t_q)\geq 0$, then $-\dot q(\tau_q)\geq 0$, for all $\tau_q \leq t_q$. In addition, the solution $c$ of~\eqref{eq:sol:c} satisfies $-\dot c(t)\geq 0$ for all $t \in [-T, 0]$ under the optimal disturbance $w^*=L(t)v$.
\end{lem}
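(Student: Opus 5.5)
The plan is to rewrite \eqref{ODE} in reversed time as a differential equation with a Metzler, piecewise-linear right-hand side and use comparison arguments for cooperative systems. Set $s=-t$ and $p(s)=q(-s)$, so that
\begin{align*}
\frac{d}{ds}p(s)=A^{\top}p(s)+C^{\top}\theta-C^{\top}H^{\top}|\psi-p(s)| =: F(p(s)).
\end{align*}
Since $|\psi-p|$ is Lipschitz, $F$ is globally Lipschitz and solutions are unique, which gives the uniqueness claimed in Theorem~\ref{MEE_LR_fin}. The key structural fact is that $F$ is quasi-monotone (of Kamke type). For $p\le p'$ with $p_i=p'_i$, write the increment $F(p')-F(p)$ as $A^{\top}(p'-p)-C^{\top}H^{\top}(|\psi-p'|-|\psi-p|)$. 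Each coordinate of $|\psi-p|$ changes by $-\sigma_j(p'_j-p_j)$ for some $\sigma_j\in[-1,1]$, by a mean-value/Clarke argument on the absolute value. Hence
\begin{align*}
F(p')-F(p)=\bigl(A^{\top}-C^{\top}H^{\top}\Sigma\bigr)(p'-p),
\end{align*}
with $\Sigma=\mathrm{diag}(\sigma)$ and $|\Sigma|\le I$. The matrix $A-\Sigma HC$ is Metzler because its off-diagonal entries are bounded below by those of $A-HC$, using $H,C\ge0$ and Assumption~\ref{ass}~\eqref{cond:ass:A-HC}. So $F(p')-F(p)=M(p'-p)$ with $M$ Metzler, and the $i$-th component of this difference is $\ge 0$ whenever $p\le p'$ and $p_i=p'_i$.

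The first claim then follows by comparison. I would set $\delta(s)=p_2(s)-p_1(s)$. Then $\dot\delta=M(s)\delta$ with $M(s)$ measurable, bounded and Metzler, so $\delta(s)=\Phi(s,s_0)\delta(s_0)$ with $\Phi\ge0$, since the transition matrix of a Metzler system is nonnegative. Taking $\delta(s_0)\ge0$ gives $q_1\le q_2$ backward in $t$. The condition $0\le q_1(t_q)$ is only needed so that the solutions stay in $\mathbb R^n_+$. For that, I would show $\mathbb R^n_+$ is forward invariant in $s$: at $p\ge0$ with $p_i=0$,
\begin{align*}
F_i(p)\ge \sum_{j\ne i}A_{ji}p_j+\bigl(C^{\top}(\theta-H^{\top}|\psi-p|)\bigr)_i .
\end{align*}
Bounding $|\psi-p|\le|\psi|+p$ and using $\theta\ge H^{\top}|\psi|$ leaves the term $-(C^{\top}H^{\top}p)_i$. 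This term combines with the $A$ term into an off-diagonal entry of $A-HC$, which is nonnegative. Hence $F_i(p)\ge0$ and the orthant is invariant; in particular $q\ge0$ starting from $q(0)=0$.

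For the derivative claim, I would let $r(s)=\dot p(s)$. Since $F$ is Lipschitz, $p$ is $C^1$, and the differences $p(s+h)-p(s)$ obey the same linear Metzler comparison as above. That comparison preserves nonnegativity: if $r(s_0)\ge0$, then $p(s_0+h)\ge p(s_0)$ for small $h>0$. Applying the monotonicity just proved to the time-shifted solution $\tilde p(s)=p(s+h)$ gives $\tilde p(s)\ge p(s)$ for all $s\ge s_0$. Dividing by $h$ and letting $h\to 0$ yields $r(s)\ge0$, that is, $-\dot q(\tau_q)\ge0$ for $\tau_q\le t_q$. The delicate point here is the small-$h$ step when $r(s_0)$ has zero components. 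I would instead argue via the semigroup: $p(s_0+h)-p(s_0)=\int_0^h r$, and $r$ satisfies $\dot r=M(s)r$ almost everywhere, with $M(s)$ a Metzler generalized Jacobian. So $r(s)=\Phi(s,s_0)r(s_0)\ge0$ directly.

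Finally, for $c$: under $w^*=Lv$ we have $v=y-Cx$ measured, and $-\dot c=(\theta+L^{\top}\psi)^{\top}v$. With $L=-DH$, $D=\mathrm{diag}(d)$, $d_i\in\mathrm{sign}(\psi_i-q_i)$, we get $L^{\top}\psi=-H^{\top}D\psi$, and $D\psi\le|\psi|$. Hence $\theta+L^{\top}\psi\ge\theta-H^{\top}|\psi|\ge0$ by \eqref{cond:ass:r-E.Ts}. Since admissibility gives $v\ge0$, it follows that $-\dot c\ge0$. Note that \eqref{eq:sol:c} uses $y-C\bar x=Ce+v$. Under the optimal disturbance this is $\ge0$ as well, because $e\ge0$ by Remark~\ref{rem-error}, so the sign conclusion holds with either reading. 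The main obstacle is the nonsmoothness of $|\psi-q|$, which I handle through the generalized Jacobian $\Sigma$ with $|\Sigma|\le I$ throughout.
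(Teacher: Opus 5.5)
Your proof is correct. It uses the same core fact as the paper, the triangle inequality together with $A-HC$ Metzler, but reaches the comparison and derivative claims by a more explicit mechanism.

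The paper bounds the Jacobian of the right-hand side of \eqref{ODE} below by $A^{\top}-C^{\top}H^{\top}$ and cites the Kamke-type monotonicity result for cooperative systems in Smith's monograph. For the derivative claim it only records $-\dot q\geq (A-HC)^{\top}q$ and asserts that $-\dot q\geq 0$ propagates backward.

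You write the triangle-inequality fact in exact form, $F(p')-F(p)=M(p'-p)$ with $M$ Metzler. You then reduce both claims to nonnegativity of the transition matrix of a time-varying Metzler linear system. This is applied once to the difference of two solutions and once to $r=\dot p$.

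What your route buys:
\begin{itemize}
\item It is self-contained and treats the nonsmoothness of $|\psi-q|$ directly. The paper's cited result is phrased for differentiable vector fields.
\item It shows the comparison does not need $0\leq q_1(t_q)$.
\item It actually proves the propagation of $-\dot q\geq 0$ rather than asserting it.
\end{itemize}
The paper's route is shorter but delegates these points to the citation.

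Your orthant-invariance computation is the paper's inequality $F(p)\geq (A-HC)^{\top}p$ for $p\geq 0$, restricted to the faces. Your treatment of $c$ coincides with the paper's, provided you use the reading $y-C\bar x=Ce+v\geq 0$ that \eqref{eq:sol:c} actually prescribes, rather than your first reading with $v$ alone.

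Two small repairs are needed.
\begin{itemize}
\item With your sign convention the matrix is $A^{\top}+C^{\top}H^{\top}\Sigma$, not $A^{\top}-C^{\top}H^{\top}\Sigma$. This is harmless, since $\Sigma$ ranges over a symmetric set.
\item In the $r$-equation, a pointwise generalized Jacobian does not by itself give a measurable coefficient. Name an explicit measurable selection instead, e.g.\ $\Sigma(s)=\mathrm{diag}(\operatorname{sgn}(\psi-p(s)))$ with $\operatorname{sgn}(0)=0$. This is valid because $\frac{d}{ds}|\psi_j-p_j(s)|=0$ for almost every $s$ with $p_j(s)=\psi_j$.
\end{itemize}
With that selection the equation reads $\dot r=(A-LC)^{\top}r$, where $L=-\Sigma H$ is an admissible time-reversed gain from \eqref{eq:sol:L_t}. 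So the costate derivative evolves under the transpose of the error-dynamics matrix, and nonnegativity of its transition matrix closes the argument.
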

\begin{proof} 
Let $-\dot q_i(t_q)=f_q(q_i)$, where $f_q(q)$ is the right-hand side of~\eqref{ODE}. By the triangle inequality it holds that $\frac{\partial f_q}{\partial q}\geq A^{\top}-C^{\top}H^{\top}$, which is Metzler by Assumption~\eqref{cond:ass:A-HC}, hence $\frac{\partial f_q}{\partial q}$ is Metzler. By~\cite[Rem 1.1, Ch. 3]{Smith1995MonotoneDS} the trajectories that satisfy $-\dot q = f_q(q)$ are monotone. 
Similarly, by Assumptions~\eqref{cond:ass:A-HC}, ~\eqref{cond:ass:r-E.Ts} and the triangle inequality it follows that equation~\eqref{ODE} satisfies the inequality
\begin{align*}
   -\dot{q}(t_q)\geq ( A^{\top}-C^{\top}H^{\top})q(t_q),
\end{align*}
which, again by Assumption~\eqref{cond:ass:A-HC}, completes the first part of the proof. Evaluating this inequality at $t=0$ and using $q(0)=0$ gives $-\dot q(0)\geq 0$, and since $-\dot q(t_q)\geq 0$ propagates to all $\tau_q \leq t_q$ we have $-\dot q(t)\geq 0$ for all $t\in [-T, 0]$ so $q$ is nonincreasing forward in time and $q(t)\geq 0$ for all $t\in [-T, 0]$.

Lastly, consider the equation~\eqref{eq:sol:c}. By Assumption~\ref{ass}~\eqref{cond:ass:r-E.Ts} and $y(t)-C\bar x(t) \geq y(t)-C x(t) \geq 0$ for all $t \in [-T, 0]$ it follows that $-\dot c(t) \geq 0$. 
\end{proof}

Next we prove Theorem~\ref{MEE_LR_fin}.

\begin{proof}
Let $V(t,\xi)$ be the value function at state $x(t)=\xi$ and time $t\in[-T,0]$ and the measurements $y(\tau)$ on $\tau\in [t, 0]$
\begin{align*}
    V(t,\xi)=\inf_{w(\cdot)} \int_{t}^0 \big( \psi^{\top}w(\tau)+\theta^{\top}v(\tau)\big)d\tau.
\end{align*}
By dynamic programming theory~\cite{hespanha}, $V$ satisfies the HJB equation
\begin{align}\label{HJB}
   &-\partial_t V(t,\xi)=\min_{|w|\le H(y-C\xi)}\{\psi^\top w+\theta^\top(y-C\xi) \\
   &~~~~~~~~~~~~~~~~~~~~~~~+\partial_\xi V(t,\xi)^\top(A\xi+Bu+w)\}.\notag
\end{align}
Let 
\begin{align}\label{candidate_VF}
    V(t,\xi):=q(t)^{\top}(\bar x(t)-\xi)+c(t)
\end{align}
be our value function candidate, so $\partial_\xi V=-q(t)$ and $\partial_t V=\dot q(t)^{\top}(\bar x(t)-\xi)+q^{\top} \dot{\bar x}(t)+\dot c(t)$. Substituting into~\eqref{HJB}
\begin{align*}
    &-\Big(\dot q(t)^{\top}(\bar x(t)-\xi)+q(t)^{\top}\dot{\bar x}(t)+ \dot c(t)\Big)\\
    &~~= \min_{|w|\leq H(y-C\xi)}\big\{ \psi^{\top}w+\theta^{\top}(y-C\xi)-q^{\top}(t)(A\xi+Bu+w)\big\}\\
         &~~= \min_{|w|\leq H(y-C\xi)}\big\{ (\psi^{\top}- q^{\top}(t))w\big\} +\theta^{\top}(y-C\xi)-q^{\top}(t)A\xi-q^{\top}(t)Bu\\
         &~~=(\theta^{\top}-|\psi^{\top}- q^{\top}(t)|H)(y-C\xi)-q^{\top}(t)(A\xi+Bu)
\end{align*}
This equality can equivalently be rewritten as  
\begin{align*}
    0&=q^{\top}(t)\dot{\bar x}(t)+\dot q^{\top}(t)\bar x(t) +\dot c(t)-q(t)^{\top}Bu+ (\theta^{\top}-|\psi^{\top}- q^{\top}(t)|H)y \\
        &~~~~~~~~~~~~~~~~~~~~~~~~~~~~
        +(-\dot q^{\top}(t)-q^{\top}(t)A -\theta^{\top}C+|\psi^{\top}- q^{\top}(t)|HC)\xi.
\end{align*}
Since this should hold for an arbitrary $\xi$, it follows that
    \begin{align}\label{ODE_proof}
        -\dot q^{\top}(t)=q^{\top}(t)A+\theta^{\top}C-|\psi^{\top}-q^{\top}(t)|HC
    \end{align}
which yields~\eqref{ODE}.
The remaining $\xi$-independent terms then satisfy
\begin{align}\label{aux}
     0&=q^{\top}(t)(\dot{\bar x}-Bu)+\dot q^{\top}(t)\bar x(t) + (\theta^{\top}-|\psi^{\top}- q^{\top}(t)|H)y +\dot c(t).
\end{align}
 Define $D(t)\in \mathrm{diag}(\mathrm{sign}(\psi -q(t)))$ and $L(t)=-D(t)H$. Substituting~\eqref{ODE_proof} in~\eqref{aux}, the following holds
 \begin{align*}
        0&= (\theta^{\top}-(\psi^{\top}- q^{\top}(t)) D(t)H)y +q(t)^{\top}(\dot{\bar x}-Bu)\\
        &~~~~~~~~~-(q^{\top}(t)A+\theta^{\top}C-(\psi^{\top}-q^{\top}(t))D(t)HC)\bar x(t)+\dot c(t)\\
        &=q^{\top}(t)(-A\bar x-Bu+\dot{\bar x}+D(t)Hy-D(t)HC\bar x)\\
        &~~~~~~~~~~~~~+(\theta^{\top}-\psi^{\top}D(t)H)y-(\theta^{\top}-\psi^{\top}D(t)H)C\bar x+\dot c(t)
        \end{align*}
    which gives~\eqref{eq:sol:x} and~\eqref{eq:sol:c}.

Furthermore, the boundary conditions for $q$ and $c$ are determined by the boundary condition $V(0, \xi)=0$ for all $\xi$, because the cost of the integral~\eqref{cost_cont_fin} on $[0, 0]$ is zero. Imposing this on our value function candidate~\eqref{candidate_VF}
\begin{align*}
    0=V(0,\xi)=q(0)^{\top}(\bar x(0)-\xi)+c(0),\ \forall \xi\in\mathbb R^n.
\end{align*}
This yields the boundary conditions $q(0)=0$, $c(0)=0$.

Lastly, observe that the right hand side of~\eqref{ODE_proof} is continuous. Hence, $V(t,\xi)\in C^{1,2}(t,\xi)$ on $[-T,0]\times\mathbb R^n$. It remains to confirm that $w^*$ is optimal among all admissible disturbances and that $V$ therefore attains the optimal cost. Since the value function $V(t,\xi)$ satisfies the HJB equation~\eqref{HJB}, a standard verification argument~\cite{viscosity_fleming} applies for any admissible $w(\cdot)$ on $[-T, 0]$. Note that
\begin{align*}
    \tfrac{d}{dt}V(t,x(t)) = \partial_t V(t,x(t)) -q(t)^{\top}\dot x(t).
\end{align*}
It follows from the HJB equation~\eqref{HJB} that for any admissible $w$
\begin{align*}
    -\partial_t V \le \psi^\top w + \theta^{\top} (y-Cx) - q^\top(Ax+Bu+w),
\end{align*}
with equality at $w=w^*$ and $w^*(t)=L(t)(y(t)-Cx(t))$. Therefore,
\begin{align*}
   -\tfrac{d}{dt}V(t,x(t)) &= -\partial_t V + q^\top(Ax+Bu+w)\\
    &\leq \psi^\top w + \theta^{\top} (y-Cx),
\end{align*}
with equality at $w=w^*$. Integrating from $-T$ to 0 yields
\begin{align*}
    &V(-T, x(-T))-V(0, x(0)) \leq \int_{-T}^0 [\psi^{\top}w(\tau)+\theta^{\top}(y(\tau)-Cx(\tau))]d\tau.
\end{align*}
The boundary condition $V(0, x(0))=0$ gives 
\begin{align*}
    V(-T, x(-T))\leq \int_{-T}^0 [\psi^{\top}w(\tau)+\theta^{\top}(y(\tau)-Cx(\tau))]d\tau
\end{align*}
with equality for $w=w^*$. Hence, $V(-T, x(-T))=q(-T)^{\top}(\chi-x(-T))+c(-T)$ is the optimal cost with optimal disturbance $w^*(t)=L(t)(y(t)-Cx(t))=L(t)v(t)$.
\end{proof}

The next example illustrates the non-uniqueness of the optimal gain. 
\begin{exmp}\label{exmp_1}
\begin{figure}[h]
    \includegraphics[scale=0.47]{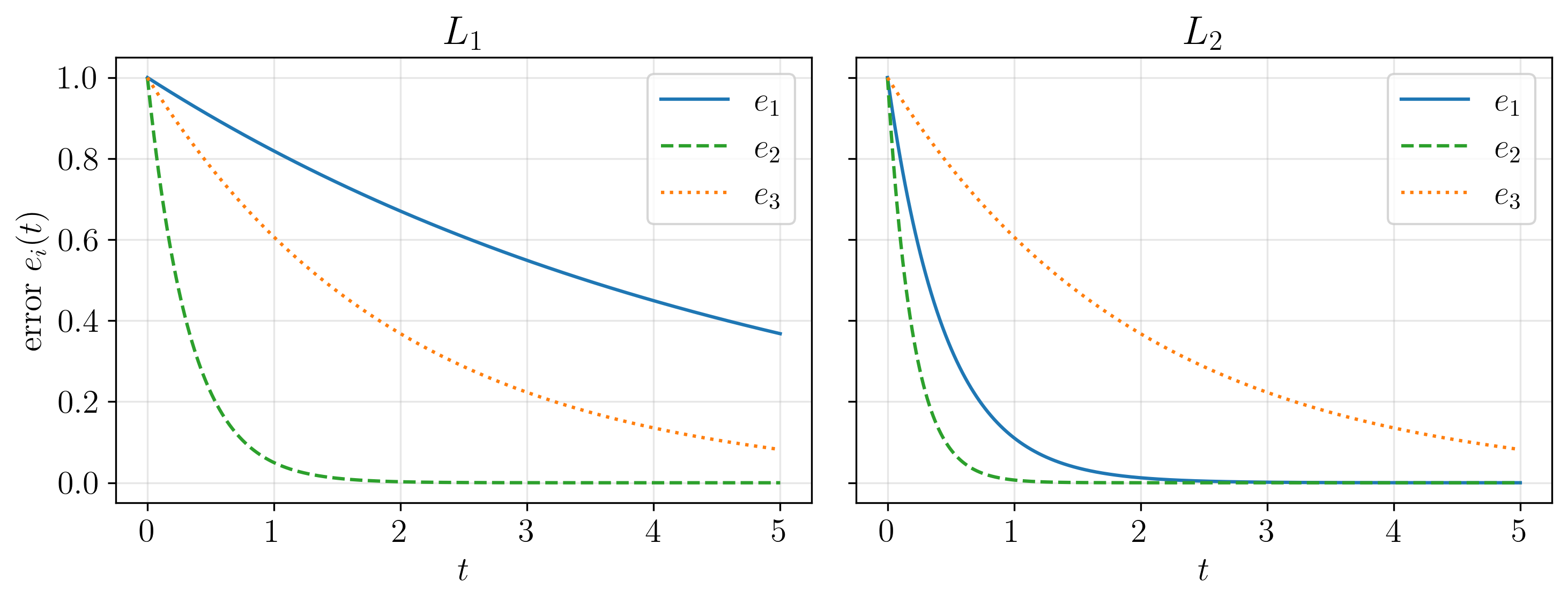}
        
        \caption{Error system trajectories under two optimal gains $L_1$, $L_2$.}
        \label{example1}
    \end{figure}
    Let 
    \begin{align*}
        A&=\begin{bmatrix}
            -1.2&0&0\\
            0&-4&0\\
            0&0&\frac{1}{2}
        \end{bmatrix}, \hspace{1mm} C=I_3, \hspace{1mm} H=I_3\\
        \psi&=\begin{bmatrix}
            0&0&-1
        \end{bmatrix}^{\top}, ~~~~~\hspace{1mm} \theta=\begin{bmatrix}
            0&0&2
        \end{bmatrix}^{\top}
    \end{align*}
    with $B=0$. Assumption~\ref{ass} holds since $A-HC=\mathrm{diag}(-2.2, -5, -1/2)$ is Metzler and $\theta =\begin{bmatrix}
        0&0&2
    \end{bmatrix}^{\top}\geq H^{\top}|\psi|=\begin{bmatrix}
        0&0&1
    \end{bmatrix}^{\top}$. The Perron-Frobenius eigenvalue of $A$ is $\lambda=1/2>0$, so the open loop is unstable. The solution to the ODE~\eqref{ODE} is
    \begin{align*}
        q(t)=\begin{bmatrix}
            0&0&2(1-e^{t/2})
        \end{bmatrix}^{\top}, \hspace{1mm} q(0)=0.
    \end{align*}
    Note that
    \begin{align*}
        \psi - q(t)=\begin{bmatrix}
            0&0&-3+2e^{t/2}
        \end{bmatrix}^{\top}
    \end{align*}
    Hence, both 
    \begin{align*}
        \scalebox{.95}{$L_1=\begin{bmatrix}
            -1&0&0\\
            0&-1&0\\
            0&0&1
        \end{bmatrix}, \hspace{0.5mm} L_2=\begin{bmatrix}
            1&0&0\\
            0&1&0\\
            0&0&1
        \end{bmatrix} \hspace{1mm} \forall t \in [-T, 0]$}
     \end{align*}
    achieve the optimal cost and give the closed loop system matrices
    \begin{align*}
        \scalebox{.8}{$A-L_1 C=\begin{bmatrix}
            -0.2&0&0\\
            0&-3&0\\
            0&0&-\frac{1}{2}
        \end{bmatrix}, \hspace{1mm} A-L_2 C=\begin{bmatrix}
            -2.2&0&0\\
            0&-5&0\\
            0&0&-\frac{1}{2}
        \end{bmatrix}$}
    \end{align*}
    which are Hurwitz. Although $L_1$, $L_2$ incur the same optimal cost, they induce different error transients. Under $L_1$ (Figure~\ref{example1}, left) the first component $e_1$ is the slowest, while under $L_2$ (Figure~\ref{example1}, right) $e_3$ becomes the slowest component instead. 
\end{exmp}
\subsection{Infinite Horizon}\label{subsec_inf_hor}
The infinite-horizon $L_1$-MEE problem is analyzed through the costate of the finite-horizon solution evaluated at the initial time. Theorem~\ref{thm_MEE_LR_inf} shows that this costate converges as $T\rightarrow\infty$ if and only if the algebraic equation~\eqref{ARE} admits a minimal nonnegative solution $q$, which defines a stationary gain and a time-invariant Luenberger observer, the stationary estimator. Since the cost~\eqref{cost_cont_fin} ends at the current time $(t=0)$, the optimal gain may still switch close to $t=0$. Remark~\ref{rem_optimal_inf} describes the resulting optimal estimator, and Proposition~\ref{prop_static_optimal} gives a sign condition on the cost weights under which the stationary estimator is optimal. Finally, Theorem~\ref{Cont_VI} provides a value iteration scheme that computes $q$.
\begin{thm}\label{thm_MEE_LR_inf}
     Consider $A \in \mathbb R^{n\times n}$, $B\in \mathbb R^{n\times l}$, $\psi \in \mathbb R^n$, $C\in \mathbb R^{m\times n}_+ , \hspace{1mm} H\in \mathbb R^{n\times m}_+$, $\theta \in \mathbb R^m$. Let Assumption~\ref{ass} hold. 
     The following are equivalent: 
    \begin{enumerate}[(i)]
        \item The limit $q:=\lim_{T\rightarrow \infty}q_T(-T)$ exists in $\mathbb R^n_+$ where $q_T:[-T, 0]\rightarrow \mathbb R^n_+$ is the solution of the ODE~\eqref{ODE} on $[-T, 0]$.
    \item There exists $q\in \mathbb R^n_+$ satisfying 
    \begin{align}\label{ARE}
        0=q^{\top}A+\theta^{\top}C-|\psi^{\top}-q^{\top}|HC,
    \end{align}
    which is minimal among all nonnegative solutions $\hat q$ of~\eqref{ARE} in the sense that $q \leq \hat q$.
    \end{enumerate}
    Moreover, when $(i)$ and $(ii)$ hold, the optimal gain~\eqref{eq:sol:L_t} of Theorem~\ref{MEE_LR_fin} at the initial time converges, as $T\rightarrow\infty$, to a stationary gain 
    \begin{align}
        L&\in-\mathrm{diag}(\mathrm{sign}(\psi -q))H \label{sol_inf:L}
    \end{align}
    which defines the stationary estimator
    \begin{align}
        \dot{\bar x}(t)&=A\bar x(t)+Bu(t)+L(y(t)-C\bar x(t)), 
        \label{sol_inf:xbar}
    \end{align}
    initialized at $\bar x(-T)=\chi$.
\end{thm}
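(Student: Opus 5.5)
The proof runs through the time-invariant structure of~\eqref{ODE}. Write the right-hand side as $f(q)^{\top}=q^{\top}A+\theta^{\top}C-|\psi^{\top}-q^{\top}|HC$. Since the vector field is autonomous and $q_T(0)=0$, a time shift shows that $q_T(-T)=\phi(T)$. Here $\phi$ is the solution of the forward ODE $\dot\phi=f(\phi)$, $\phi(0)=0$, with $f$ written as a column map. By Lemma~\ref{lem_monotone}, $-\dot q\ge 0$ holds along $q_T$, so $\phi$ is nondecreasing in $T$, and $\phi(T)\ge 0$. The monotonicity (comparison) part of Lemma~\ref{lem_monotone} says that solutions ordered at one time stay ordered backward in time. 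This means the forward flow of $\dot\phi=f(\phi)$ preserves the order $\le$ on $\mathbb R^n_+$. This is the cooperative property coming from $\partial f/\partial q$ being Metzler.

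For (i)$\Rightarrow$(ii), let $q=\lim_{T\to\infty}\phi(T)$. Because $\phi$ is monotone and converges, $\dot\phi(T)=f(\phi(T))$ tends to $f(q)$ by continuity. If some component of $f(q)$ were nonzero, the corresponding component of $\phi$ would diverge, which is a contradiction. Hence $f(q)=0$, which is~\eqref{ARE}. For minimality, take any nonnegative solution $\hat q$ of~\eqref{ARE}. The constant function $\hat q$ solves~\eqref{ODE} with terminal value $\hat q\ge 0=q_T(0)$. The comparison in Lemma~\ref{lem_monotone} then gives $q_T(t)\le\hat q$ for all $t$, in particular $\phi(T)\le\hat q$, and letting $T\to\infty$ yields $q\le\hat q$.

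For (ii)$\Rightarrow$(i), let $q$ be the minimal nonnegative solution. The same comparison with the constant solution $q$ gives $0\le\phi(T)\le q$ for all $T$. So $\phi$ is nondecreasing and bounded above, and it converges to some $q_\infty\le q$. By the argument of the previous paragraph $q_\infty$ solves~\eqref{ARE}. It is nonnegative, so minimality gives $q\le q_\infty$, hence $q_\infty=q$. This establishes the equivalence.

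For the gain, let $L_T$ be a gain at the initial time, chosen from~\eqref{eq:sol:L_t} with $q_T(-T)=\phi(T)\to q$. Each component $\psi_i-\phi_i(T)$ converges to $\psi_i-q_i$. When $\psi_i\neq q_i$, the sign of $\psi_i-\phi_i(T)$ is eventually constant and equals $\mathrm{sign}(\psi_i-q_i)$. When $\psi_i=q_i$, every value in $[-1,1]$ is allowed in~\eqref{sol_inf:L}. The set-valued map $\mathrm{sign}$ has closed graph (it is upper semicontinuous), so any limit point of $L_T$ lies in $-\mathrm{diag}(\mathrm{sign}(\psi-q))H$. Moreover, an admissible selection converges: take the constant diagonal entries in the strict components, and any fixed value in $[-1,1]$ in the degenerate ones.

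I expect the main obstacle to be this last step. Convergence of the gain must be read as convergence of a suitable selection, or as convergence into the limit set, because $\mathrm{sign}$ is discontinuous at $0$. I would state it via upper semicontinuity of $\mathrm{sign}$. The stationary estimator~\eqref{sol_inf:xbar} is then simply the observer~\eqref{eq:sol:x} with this constant gain.
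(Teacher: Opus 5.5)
Your proof is correct and follows the same route as the paper's. Both compare $q_T$ with constant nonnegative equilibria via Lemma~\ref{lem_monotone}, use bounded monotone convergence, and close with minimality. Your time shift $q_T(-T)=\phi(T)$ and your observation that a nonzero component of $f(q)$ would force $\phi$ to diverge give a cleaner justification than the paper's claim that the derivative vanishes by monotone convergence.

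One small correction in the gain step. Since $\phi$ increases to $q$ from below, in a degenerate component with $\psi_i=q_i$ the sign of $\psi_i-\phi_i(T)$ is forced to be $+1$ whenever $\phi_i(T)<q_i$. Your convergent admissible selection must therefore take $+1$ there, not an arbitrary fixed value in $[-1,1]$. Its limit still lies in $-\mathrm{diag}(\mathrm{sign}(\psi-q))H$, so the conclusion holds.
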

\begin{rem}\label{rem_optimal_inf}
    The stationary estimator~\eqref{sol_inf:xbar} is not optimal in general. Since the cost~\eqref{cost_cont_fin} ends at the current time, the costate of Theorem~\ref{MEE_LR_fin} satisfies $q(0)=0$ and increases monotonically to $q$ as $t\rightarrow-\infty$ (Lemma~\ref{lem_monotone}). The optimal estimator is therefore the observer~\eqref{eq:sol:x} with the gain~\eqref{eq:sol:L_t}, each $\mathrm{sign}(\psi_i-q_i(t))$ switching at most once, from $\mathrm{sign}(\psi_i-q_i)$ in the past to $\mathrm{sign}(\psi_i)$ near $t=0$. The minimum value of the infinite-horizon $L_1$-MEE problem is
    \begin{align}\label{opt_value_inf}
        \lim_{T\rightarrow\infty}\big[q_T(-T)^{\top}(\chi-x(-T))+c_T(-T)\big]
    \end{align} attained by the optimal disturbance $w^*(t)=L(t)v(t)$.
\end{rem}
\begin{rem}
    The estimation error $e(t)=x(t)-\bar x(t)$ of the stationary estimator satisfies  
    \vspace{-1mm}
    \begin{align}\label{error_inf}
        \dot e(t)=(A-LC)e(t)+w(t)-Lv(t).
    \end{align}
\end{rem}
\begin{rem}
As in the finite horizon case (Remark~\ref{rem_K}), the gain~\eqref{sol_inf:L} is set-valued at any index $i$ with $(\psi-q)_i=0$. Different admissible gains give different estimator trajectories $\bar x$ while $q$ is unchanged. 
\end{rem}
Next we prove Theorem~\ref{thm_MEE_LR_inf}.

\begin{proof}
Let $q_T(t)$ denote the solution to~\eqref{ODE} on $[-T, 0]$ for a fixed horizon $T\in \mathbb R_+$. Throughout the proof we use, without further reference, the monotonicity of $q_T(-T)$ in the horizon $T$ established in Lemma~\ref{lem_monotone}. 

$(i)\Longrightarrow(ii)$ Assume $q_T(-T)$ converges to $q\in \mathbb R^n_+$. By Lemma~\ref{lem_monotone}, $q_T(-T)$ is monotone nondecreasing in $T$, so
\begin{align*}
   \frac{\partial q_T(-T)}{\partial T} \rightarrow 0, \hspace{1mm} \mathrm{as}\hspace{1mm} T \rightarrow \infty
\end{align*}
by monotone convergence. By continuity of the right hand side of~\eqref{ODE} in $q$, the limit $q$ is an equilibrium of~\eqref{ODE}, so the algebraic equation~\eqref{ARE} holds for $q$. By the monotonicity of the dynamics $q_T(t)$, if any $\hat q \geq 0$ satisfies~\eqref{ARE} and is an equilibrium of~\eqref{ODE}, then $q_T(t)\leq \hat q$ for all $t$. Since $q=\lim_{T\rightarrow \infty}q_T(-T)$ it follows that $q\leq \hat q$, and thus $q$ is minimal among all nonnegative solutions of~\eqref{ARE}.

$(ii)\Longrightarrow (i)$ Assume there exists $q\in \mathbb R_+^n$ satisfying~\eqref{ARE} and which is minimal among all nonnegative solutions $\hat q$ of~\eqref{ARE}, in the sense that $q\leq \hat q$. By the monotonicity of the dynamics $q_T(t)$ in Lemma~\ref{lem_monotone} it follows that if any $\hat q \geq 0$ satisfies~\eqref{ARE} and is an equilibrium of~\eqref{ODE}, the trajectory $q_T(t)$ satisfies $q_T(t)\leq \hat q$ for all $t$. Since $q_T(-T)$ is monotone nondecreasing, it follows that $\lim_{T\rightarrow \infty}q_T(-T)\leq \hat q$, and thus $q_T(-T)$ converges. Let $\bar q:= \lim_{T\rightarrow \infty}q_T(-T)$. It follows that $\bar q$ satisfies~\eqref{ARE} and that $\bar q \leq \hat q$ for every nonnegative solution $\hat q$ of~\eqref{ARE}, in particular $\bar q \leq q$. Since $q$ is also minimal, $q \leq \bar q$, hence $\bar q=q$. 
Thus, $q_T(-T)$ converges, which gives $(i)$. 
This establishes the equivalence $(i)\Longleftrightarrow(ii)$. 

By Theorem~\ref{MEE_LR_fin}, the optimal gain of Problem~\ref{problem_L1MEE} on $[-T,0]$ at the initial time is $L_T(-T)\in-\mathrm{diag}(\mathrm{sign}(\psi-q_T(-T)))H$. Since $q_T(-T)\rightarrow q$ by $(i)$, this gain converges to the stationary gain~\eqref{sol_inf:L} as $T\rightarrow\infty$.
\end{proof}

Remark~\ref{rem_optimal_inf} shows that the stationary estimator differs from the optimal one only through the switches of the optimal gain close to the current time. These switches do not occur when the sign of each entry of $\psi-q(t)$ stays constant as $q(t)$ grows from $0$ to $q$. The next proposition shows that, in this case, the stationary estimator is optimal.
\begin{prop}\label{prop_static_optimal}
Suppose $(i)$ and $(ii)$ of Theorem~\ref{thm_MEE_LR_inf} hold, and that for every $i$ either $\psi_i\leq 0$ or $\psi_i\geq q_i$. Then the stationary estimator~\eqref{sol_inf:xbar} is optimal for the infinite-horizon case of Problem~\ref{problem_L1MEE}, with optimal disturbance $w^*(t)=Lv(t)$.
\end{prop}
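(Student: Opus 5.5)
The plan is to show that, under the sign hypothesis, the stationary gain $L\in-\mathrm{diag}(\mathrm{sign}(\psi-q))H$ is itself an admissible choice in~\eqref{eq:sol:L_t} at every time $t\in[-T,0]$ and for every horizon $T$. Then the stationary estimator~\eqref{sol_inf:xbar} coincides with one of the optimal estimators of Theorem~\ref{MEE_LR_fin}, and passing to $T\to\infty$ as in Remark~\ref{rem_optimal_inf} gives optimality in the infinite-horizon sense.

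The first step uses Lemma~\ref{lem_monotone} together with Theorem~\ref{thm_MEE_LR_inf}. For each horizon $T$ we have $q_T(0)=0$, $-\dot q_T\geq 0$, and $q_T(t)\leq q$ for all $t\in[-T,0]$, since $q$ is a nonnegative equilibrium of~\eqref{ODE} and monotonicity gives $q_T(t)\leq q$ from $q_T(0)=0\leq q$. Hence each component satisfies $0\leq q_{T,i}(t)\leq q_i$.

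The second step is a componentwise case split.
\begin{itemize}
\item If $\psi_i\leq 0$, then $\psi_i-q_{T,i}(t)\leq 0$ for all $t$, and also $\psi_i-q_i\leq 0$.
\item If $\psi_i\geq q_i$, then $\psi_i-q_{T,i}(t)\geq \psi_i-q_i\geq 0$ for all $t$.
\end{itemize}
In either case $\psi_i-q_{T,i}(t)$ has a fixed weak sign along the whole trajectory, and this sign agrees with that of $\psi_i-q_i$.

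I must check that the chosen $d_i\in\mathrm{sign}(\psi_i-q_i)$ also lies in $\mathrm{sign}(\psi_i-q_{T,i}(t))$ for every $t$. The set-valuedness at zero needs care here.
\begin{itemize}
\item If $\psi_i-q_i<0$, then $\psi_i-q_{T,i}(t)\leq 0$, so $-1\in\mathrm{sign}(\psi_i-q_{T,i}(t))$.
\item If $\psi_i-q_i>0$, then $\psi_i-q_{T,i}(t)>0$, so $d_i=+1$ is admissible.
\item If $\psi_i-q_i=0$ with $\psi_i\geq q_i$, then $\psi_i-q_{T,i}(t)\geq0$ and one should pick $d_i=+1$.
\item If $\psi_i-q_i=0$ with $\psi_i\le 0$, then $q_i=\psi_i\le 0$ forces $q_i=0$, so $q_{T,i}\equiv 0$ and any $d_i\in[-1,1]$ works.
\end{itemize}
I expect this boundary case to be the main subtlety. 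If the stationary gain was picked with an interior $d_i$ while $\psi_i-q_{T,i}(t)>0$ strictly, I would argue that it still yields the same cost. When $\psi_i=q_i$, the $i$-th component of the term $(\psi-q)^\top D H$ is zero, so the value of $d_i$ only matters through $\psi-q_T(t)$. I would therefore state the result for an appropriate selection, or note that Remark~\ref{rem_K} makes every admissible selection optimal.

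The final step plugs the constant selection $L(t)\equiv L$ into Theorem~\ref{MEE_LR_fin}. This shows that $\bar x$ generated by~\eqref{sol_inf:xbar} from $\bar x(-T)=\chi$ is an $L_1$-minimum energy estimator on every $[-T,0]$. The optimal cost is $q_T(-T)^\top(\chi-x(-T))+c_T(-T)$, attained by $w^*(t)=Lv(t)$. Taking $T\to\infty$ and using~\eqref{opt_value_inf}, the stationary estimator attains the infinite-horizon minimum value with $w^*=Lv$.
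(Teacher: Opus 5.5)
Your argument follows the paper's own proof. You use Lemma~\ref{lem_monotone} to get $0\le q_T(t)\le q$, check componentwise that the stationary sign selection lies in $\mathrm{sign}(\psi_i-q_{T,i}(t))$ for all $t$, then apply Theorem~\ref{MEE_LR_fin} on every window $[-T,0]$ and let $T\to\infty$.

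Your treatment of the tie is sharper than the paper's proof, which does not distinguish between selections. When $\psi_i=q_i>0$, one has $\psi_i-q_{T,i}(t)>0$ near $t=0$ because $q_{T,i}(0)=0$. So only $d_i=+1$ qualifies, and you should commit to that choice.

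Drop the fallback, though. Remark~\ref{rem_K} covers selections from $\mathrm{sign}(\psi_i-q_{T,i}(t))$, not from $\mathrm{sign}(\psi_i-q_i)$. By the verification identity in the proof of Theorem~\ref{MEE_LR_fin}, an interior $d_i<1$ adds the excess cost $\int_{-T}^{0}(1-d_i)\,(\psi_i-q_{T,i}(t))\,(Hv)_i(t)\,dt$. This is positive whenever $(Hv)_i$ does not vanish near $t=0$, so such a selection is in general not optimal.
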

\begin{proof}
By Lemma~\ref{lem_monotone}, the costate $q(t)$ of Theorem~\ref{MEE_LR_fin} increases monotonically from $q(0)=0$ to $q$ as $t\rightarrow-\infty$. If $\psi_i\leq 0$, then $\psi_i-q_i(t)\leq 0$ for all $t$, and if $\psi_i\geq q_i$, then $\psi_i-q_i(t)\geq \psi_i-q_i\geq 0$ for all $t$. Hence a stationary gain $L\in-\mathrm{diag}(\mathrm{sign}(\psi-q))H$ satisfies $L\in-\mathrm{diag}(\mathrm{sign}(\psi-q(t)))H$ for all $t\leq 0$, so by Theorem~\ref{MEE_LR_fin} it is the optimal gain on every window $[-T,0]$, and therefore on $(-\infty,0]$.
\end{proof}
\begin{rem}
    The suboptimality gap of the stationary estimator is bounded independently of the horizon. Since each entry of the optimal gain switches at most once, the stationary and the optimal gains coincide before the first switch, which  occurs a finite time before the current time $(t=0)$, independently of the horizon. Moreover, this gap vanishes under the condition of Proposition~\ref{prop_static_optimal}, for which no switch occurs.
\end{rem}

The following theorem proposes an iterative fixed-point method to solve the algebraic equation~\eqref{ARE}, following the continuous-time value iteration of~\cite[Thm.~4]{TAC_paper}.

\begin{thm}[Continuous-time value iteration]\label{Cont_VI}
    Let $A \in \mathbb R^{n\times n}$, $\psi\in \mathbb R^n$, $C\in \mathbb R^{m\times n}_+ , \hspace{1mm} H\in \mathbb R^{n\times m}_+$, $\theta\in \mathbb R^m$, and $h \in \mathbb{R}$. 
    Assume $\theta \geq H ^{\top} \left | \psi \right |$, $A-HC$ is Metzler and $A-HC+ h I_n \geq 0$ with $h>0$. 
    Define $\hat{A}=\frac{1}{h}A + I_n$, $\hat{C}=C$, $\hat{H}=\frac{1}{h}H$, $\hat{\psi}=\psi$ and $\hat{\theta}=\frac{1}{h}\theta$. 
    Then, the recursive sequence $\left \{ q_{k} \right \}_{k=0}^{\infty}$ with $q_{0}=0$ and
    \begin{align}\label{p_seq}
        q_{k+1}=\hat{C}^{\top}\hat{\theta}+\hat{A}^{\top}q_k-\hat{C}^{\top}\hat{H}^{\top}\left| q_k - \hat{\psi}\right|
    \end{align}
       has a finite limit if and only if~\eqref{ARE} admits a nonnegative solution, in which case $q_k$ converges to the minimal nonnegative solution $q$ of~\eqref{ARE}.
\end{thm}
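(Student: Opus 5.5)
The plan is to rewrite the recursion \eqref{p_seq} as a monotone fixed-point iteration $q_{k+1}=F(q_k)$, where
\begin{align*}
F(q)=C^{\top}\tfrac{1}{h}\theta+\big(\tfrac1h A^{\top}+I_n\big)q-\tfrac1h C^{\top}H^{\top}|q-\psi|=q+\tfrac1h f(q),
\end{align*}
with $f(q)^{\top}=q^{\top}A+\theta^{\top}C-|\psi-q|^{\top}HC$. The first observation is that fixed points of $F$ are exactly the solutions of \eqref{ARE}. Next I will show that $F$ is order preserving on $\mathbb R^n$. For $q\le \hat q$, the reverse triangle inequality gives $|\hat q-\psi|-|q-\psi|\le \hat q-q$ componentwise. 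Since $C^{\top}H^{\top}\ge 0$, this yields
\begin{align*}
F(\hat q)-F(q)\ge \big(\tfrac1h (A-HC)^{\top}+I_n\big)(\hat q-q)\ge 0,
\end{align*}
where the last step uses the hypothesis $A-HC+hI_n\ge 0$. This is the discrete analogue of the Metzler argument in Lemma~\ref{lem_monotone}.

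With monotonicity in hand, I show that the sequence is nondecreasing and nonnegative. First, $q_1=F(0)=\frac1h\big(C^{\top}\theta-C^{\top}H^{\top}|\psi|\big)\ge 0$ because $\theta\ge H^{\top}|\psi|$ and $C\ge 0$. Induction with monotonicity of $F$ then gives $0=q_0\le q_1\le q_2\le\cdots$. If $\hat q\ge 0$ solves \eqref{ARE}, then $\hat q=F(\hat q)$, and from $q_0=0\le\hat q$ induction gives $q_k=F(q_{k-1})\le F(\hat q)=\hat q$ for every $k$. So the sequence is monotone and bounded, hence convergent. Its limit $\bar q$ satisfies $\bar q=F(\bar q)$ by continuity of $F$, so $\bar q$ is a nonnegative solution of \eqref{ARE}. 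It also satisfies $\bar q\le\hat q$ for every nonnegative solution $\hat q$, so $\bar q$ is the minimal nonnegative solution.

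For the converse direction, suppose $q_k$ has a finite limit $\bar q$. Continuity of $F$ gives $\bar q=F(\bar q)$, and $\bar q\ge 0$ because the sequence is nondecreasing from $0$. Hence $\bar q$ is a nonnegative solution of \eqref{ARE}. Minimality follows from the bound argument above, since any nonnegative solution $\hat q$ dominates every $q_k$. The only genuinely delicate step is the order-preservation inequality for the nonsmooth term $-|q-\psi|$. That step must be handled with the componentwise reverse triangle inequality rather than a Jacobian, and this is exactly where the two hypotheses that $A-HC$ is Metzler and that $A-HC+hI_n\ge 0$ enter. Everything else is a standard Kleene/Tarski-type monotone iteration argument.
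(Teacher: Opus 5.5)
Your proposal is correct and follows essentially the same argument as the paper: order preservation of $F$ via the componentwise triangle inequality and $\hat A-\hat H\hat C=\tfrac1h(A-HC+hI_n)\ge 0$, the base step $F(0)=\hat C^{\top}(\hat\theta-\hat H^{\top}|\hat\psi|)\ge 0$, domination of every iterate by any nonnegative fixed point, and continuity of $F$ to identify the limit as the minimal nonnegative solution of~\eqref{ARE}. One small remark is that only the hypothesis $A-HC+hI_n\ge 0$ is actually used in the monotonicity step, since it already implies that $A-HC$ is Metzler.
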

\begin{proof}
    Let $F(q):=\hat C^{\top}\hat \theta + \hat A^{\top}q-\hat C^{\top} \hat H^{\top}|q-\hat \psi|$. For $q\leq \tilde q$, $|\tilde q_i-\hat \psi_i|-|q_i-\hat \psi_i|\leq |\tilde q_i- q_i|=\tilde q_i-q_i$. Since $\hat C^{\top} \hat H^{\top} \geq 0$, $F(\tilde q)-F(q)\geq\hat A^{\top}(\tilde q-q)-\hat C^{\top}\hat H^{\top}(\tilde q-q)=(\hat A- \hat H \hat C)^{\top}(\tilde q-q)\geq 0$, using $\hat A- \hat H \hat C \geq 0$ (i.e. $\frac{1}{h}(A- HC +hI )\geq 0$). Moreover, the sequence~\eqref{p_seq} is nondecreasing $q_1-q_0=F(0)=\hat C^{\top}(\hat \theta - \hat H^{\top}|\hat \psi|)\geq 0$ by $\theta\geq H^{\top}|\psi|$ and $C\geq 0$. By monotonicity $q_{k+1}\geq q_k$.
    
    $(\Longleftarrow)$ If $\hat q \geq 0$ solves~\eqref{ARE}, then $F(\hat q)=\hat q$. Since $q_0 \leq \hat q$, monotonicity gives $q_k \leq \hat q$ for all $k$. The sequence is nondecreasing and bounded, hence convergent. By continuity of $F$ its limit $q$ satisfies $F(q)=q$, i.e.~\eqref{ARE}, and $q \leq \hat q$ for every nonnegative solution, so $q$ is the minimal one.

    $(\Longrightarrow)$ If $q_k \rightarrow q \in \mathbb R^n$, continuity of $F$ gives $F(q)=q$, so~\eqref{ARE} has a nonnegative (as $q\geq q_0=0$) solution.
\end{proof}
\subsection{Positive Zero-State Detectability and H-Detectability}\label{Stab_detect_section}
In the context of the infinite horizon $L_1$-MEE problem, stability of the estimator raises two questions. The first is whether the error dynamics can be stabilized by an observer gain satisfying the constraint $|L|\leq H$, which under Assumption~\ref{ass} preserves positivity of the error. The second is whether an unstable estimation error is visible in the performance output, so that it cannot grow without being penalized. This second question is related to \textit{zero-state detectability}~\cite[Def. 3.1]{ZSD}, which requires the state to converge to zero whenever the output is identically zero, in the context of this paper restricted to nonnegative initial states. These two questions lead, respectively, to the following definitions.
\begin{defn}[$H$-detectability]
    Let $A\in\mathbb R^{n\times n}$, $C\in\mathbb R^{m\times n}$ and $H\in\mathbb R^{n\times m}_+$. The pair $(C,A)$ is $H$-\textit{detectable} if there exists an observer gain $L$ with $|L|\leq H$ such that $A-LC$ is Hurwitz.
\end{defn}

\begin{defn}[Positive Zero-State Detectability (PZSD)]\label{PZSD_def}
    Let $M\in\mathbb R^{n\times n}$ be Metzler and $N\in\mathbb R^{1\times n}$ with $N\geq0$. Consider the positive system $\dot x=Mx$, $z=Nx$, and denote by $x(t,x_0)$ its solution with $x(0)=x_0\in\mathbb R^n_+$. The pair $(N,M)$ is \textit{positive zero-state detectable (PZSD)} if $z(t,x_0)=0$ for all $t\geq0$ implies $x(t,x_0)\to0$ as $t\to\infty$.
\end{defn}
Classically, a system is detectable if all unobservable states are stable, which for linear systems is equivalent to the existence of a gain $L$ such that $A-LC$ is Hurwitz~\cite[Thm.~16.8]{hespanha}. For positive systems this equivalence fails once the gain is required to preserve positivity (see Example~\ref{exmp_ZSD_Hdetec}). PZSD only considers nonnegative initial states and admits a positive-systems analogue of the Hautus criterion~\cite{math_detect}. Note that, since $M$ is Metzler, Perron–Frobenius theory implies that $M$ is Hurwitz if and only if it has no nonnegative eigenvector associated with a nonnegative eigenvalue. PZSD requires each such mode to produce a strictly positive output $z$. This is formalized in the following proposition.
\begin{prop}[Prop. 3~\cite{math_detect} ]\label{prop_detect}
    Let $M$ and $N$ be as in Definition~\ref{PZSD_def}. The pair $(N,M)$ is PZSD if and only if $Nv>0$ for every nonnegative eigenvector $v$ of $M$ corresponding to an eigenvalue $\lambda\geq0$.
\end{prop}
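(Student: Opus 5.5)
The plan is to prove both directions of Proposition~\ref{prop_detect} from the Perron--Frobenius structure of the Metzler matrix $M$. The organizing fact is the forward invariance of $\mathbb R^n_+$ under $\dot x=Mx$. Since $N\geq 0$ and $x(t,x_0)\geq 0$, the condition $z(t,x_0)=Nx(t,x_0)=0$ for all $t\geq 0$ says that the trajectory stays inside the face $F=\{x\geq 0: x_i=0 \text{ for } i\in\mathrm{supp}(N)\}$. I will therefore work with the largest $M$-invariant subset of this face that is generated by nonnegative vectors, that is, the set of all $x_0\geq 0$ whose trajectories have identically zero output.

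\emph{Necessity.} Suppose $(N,M)$ is PZSD, and let $v\geq 0$, $v\neq 0$, satisfy $Mv=\lambda v$ with $\lambda\geq 0$. If $Nv=0$, take $x_0=v$. Then $x(t,v)=e^{\lambda t}v$, so $z\equiv 0$ while $x(t,v)$ does not tend to $0$, which contradicts PZSD. Hence $Nv>0$.

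\emph{Sufficiency.} This direction is the main obstacle, and I would argue by contraposition. Let $\mathcal U=\{x_0\geq 0: Ne^{Mt}x_0=0\ \forall t\geq 0\}$. By analyticity of $t\mapsto Ne^{Mt}x_0$, this is the same as $NM^kx_0=0$ for all $k$. The set $\mathcal U$ is a closed convex cone, and it is invariant under $e^{Mt}$.

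A more concrete description comes from writing $e^{Mt}=e^{-st}e^{(M+sI)t}$ with $M+sI\geq 0$. Every entry of $e^{(M+sI)t}x_0$ is a nonnegative combination of entries of $x_0$, so $\mathcal U$ is exactly the set of $x_0\geq 0$ supported on a set $J$ of indices. Here $J$ consists of the indices $j$ from which no path in the graph of $M$ reaches $\mathrm{supp}(N)$, and $J$ is closed under successors. Consequently, after a permutation, $M$ is block triangular with $M_{JJ}$ Metzler and the submatrix $M_{JJ}$ invariant, and $N$ vanishes on $J$.

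Now suppose PZSD fails. Then some $x_0\in\mathcal U$ has a trajectory that does not converge to $0$. That trajectory evolves by $\dot x_J=M_{JJ}x_J$, so $M_{JJ}$ is not Hurwitz, i.e. $\alpha(M_{JJ})\geq 0$. Perron--Frobenius for Metzler matrices then gives a vector $v_J\geq 0$, $v_J\neq 0$, with $M_{JJ}v_J=\alpha v_J$. Extending $v_J$ by zeros outside $J$ gives $v\geq 0$. Because $J$ has no outgoing edges, $Mv=\alpha v$ with $\alpha\geq 0$, and $Nv=0$ since $v$ is supported in $J$. This contradicts the hypothesis, so $(N,M)$ is PZSD.

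The steps that need the most care are the following.
\begin{enumerate}[(a)]
\item The exact combinatorial characterization of $\mathcal U$ via reachability. This uses positivity of $x_0$ crucially: nonnegative contributions cannot cancel, so $Ne^{Mt}x_0\equiv 0$ forces every index in the support of $x_0$ to be non-reaching.
\item Checking that the extended vector is genuinely an eigenvector of the full $M$. This relies on the closure of $J$ under successors, which makes the off-diagonal coupling $M_{J^cJ}$ vanish.
\end{enumerate}
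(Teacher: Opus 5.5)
Your proof is correct. The paper gives no proof of Proposition~\ref{prop_detect}. It imports the result from \cite{math_detect}, where it is presented as a positive-systems analogue of the Hautus test, so your argument is a self-contained alternative rather than a match to an in-paper proof.

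Necessity is the standard eigen-trajectory argument. For sufficiency, your key step identifies the nonnegative initial states with identically zero output as the coordinate face $\{x\geq 0: x_{J^c}=0\}$. Here $J$ is the set of indices that cannot reach $\mathrm{supp}(N)$ in the graph of $M$. This rests on two facts:
\begin{enumerate}[(1)]
\item For $t>0$, the sign pattern of $e^{Mt}=e^{-st}e^{(M+sI)t}$ is given by reachability. Trivial paths must be included, so that $J\cap\mathrm{supp}(N)=\emptyset$; the condition $Nx_0=0$ at $t=0$ enforces this anyway.
\item Nonnegative terms cannot cancel.
\end{enumerate}
Closure of $J$ under successors gives $M_{J^cJ}=0$. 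Hence, when PZSD fails, the zero-padded Perron vector of $M_{JJ}$ is a genuine eigenvector of $M$ with eigenvalue $\alpha(M_{JJ})\geq 0$ and $Nv=0$, as you check. Two minor points: it is the face, not ``the submatrix $M_{JJ}$'', that is invariant, and the analyticity remark is not needed.

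Beyond the citation, your route yields a sharper characterization: $(N,M)$ is PZSD if and only if $J=\emptyset$ or $M_{JJ}$ is Hurwitz. This is a graph-reachability plus spectral-abscissa test. For $M=A-LC$ and $N=(\theta^{\top}+\psi^{\top}L)C$, it can be checked for a given gain without enumerating nonnegative eigenvectors. The citation buys only brevity. The eigenvector form you prove is exactly what Theorems~\ref{thm_detect_1}, \ref{thm_detect_2} and~\ref{detec_apriori} consume.
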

It should be noted that PZSD does not imply the existence of a stabilizing admissible gain. In fact, a pair may be PZSD, and fail to be $H$-detectable for every $H$, meaning that no $L$ with $|L|\leq H$ renders $A-LC$ both Metzler and Hurwitz.
\begin{exmp}\label{exmp_ZSD_Hdetec}
Let $$A=\begin{bmatrix}1&1\\1&1\end{bmatrix} \quad \text{and} \quad C=\begin{bmatrix}1&0\end{bmatrix}.$$ The pair $(C,A)$ is observable, since the observability matrix \scalebox{.9}{$\begin{bmatrix}C^{\top}&A^{\top}C^{\top}\end{bmatrix}^{\top}$} has full rank, and hence the pair is classically detectable. For instance, $L=[3,4]^{\top}$ gives the Hurwitz matrix $$A-LC=\begin{bmatrix}-2&1\\-3&1\end{bmatrix},$$ which is not Metzler. Note that, the eigenvalues of $A$ are $\lambda=0,2$, with eigenvectors $[1,-1]^{\top}$ and $[1,1]^{\top}$, respectively. The only nonnegative eigenvector with a nonnegative eigenvalue is $[1,1]^{\top}$, and $C[1,1]^{\top}=1>0$, so $(C,A)$ is also PZSD. However, $[A-LC]_{22}=1$ for every $L$, since the second state is not measured. Since a Hurwitz Metzler matrix has negative diagonal entries~\cite[Lem.~3.1]{Briat_matrices}, $A-LC$ cannot be both Metzler and Hurwitz. Hence no gain preserving positivity of the error stabilizes the error dynamics, and $(C,A)$ is not $H$-detectable for any $H$ satisfying Assumption~\ref{ass}.
\end{exmp}
Under the disturbance $w=Lv$ with $L$ as in~\eqref{sol_inf:L}, let $R=\theta^{\top}+\psi^{\top}L$, so that the running cost is $\ell=Rv$. Both the measured output $y$ and its estimate $\bar y=C\bar x$ are known, and their difference $y-\bar y=Ce+v$ combines the measurement noise with the estimation error. Weighting it by $R$ gives $R(y-\bar y)=\ell +\bar z$ with $\bar z=RCe\geq0$, so that the cost accumulated by the stationary estimator separates into the accumulated noise cost and the accumulated error contribution, $\int_{-\infty}^{0}\ell \,d\tau+\int_{-\infty}^{0}\bar z\,d\tau$. The second term collects the contribution of the estimation error, which makes $\bar z$ the performance output of interest. For instability of the error dynamics to be penalized, every unstable error mode must be visible through $RC$, since otherwise it could grow without affecting $\bar z$. This is precisely what PZSD of $(RC, A-LC)$ guarantees.

The next theorem presents an \textit{a posteriori} condition that determines if the stationary observer gain stabilizes the error dynamics and gives a sufficient condition for the minimum value of the infinite horizon $L_1$-MEE problem to be finite.
\begin{thm}\label{thm_detect_1}
    Suppose the algebraic equation~\eqref{ARE} has a finite solution $q$, and let $L$ satisfy~\eqref{sol_inf:L}. If the pair $((\theta^{\top}+\psi^{\top}L)C, A-LC)$ is PZSD, then $A-LC$ is Hurwitz, $L$ stabilizes the error system under the disturbance $w(t)=Lv(t)$ and $(C,A)$ is $H$-detectable. If, in addition, the measurement noise has finite $L_1$-seminorm $\int_{-\infty}^0\theta^{\top}v(\tau)\,d\tau<\infty$, and the state remains bounded on $(-\infty,0]$, then $c(-T)$ satisfying ~\eqref{eq:sol:cT} is bounded in $T$, and the minimum value of the infinite-horizon problem~\eqref{opt_value_inf} is finite whenever the limit exists.
\end{thm}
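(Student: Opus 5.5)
The plan is to turn the algebraic equation~\eqref{ARE} into a linear Lyapunov-type identity for the error matrix $M:=A-LC$, and then read off Hurwitz stability through the PZSD hypothesis. With $D\in\mathrm{diag}(\mathrm{sign}(\psi-q))$ and $L=-DH$ we have $|\psi-q|^{\top}=(\psi-q)^{\top}D$, so $|\psi^{\top}-q^{\top}|HC=-(\psi-q)^{\top}LC$. Substituting into~\eqref{ARE} gives
\begin{align*}
0=q^{\top}A+\theta^{\top}C+\psi^{\top}LC-q^{\top}LC=q^{\top}(A-LC)+RC,
\end{align*}
where $R=\theta^{\top}+\psi^{\top}L$. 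Hence $q^{\top}M=-RC$ with $q\geq 0$.

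Next I check the sign structure. Since $|L|\leq H$, Remark~\ref{rem-error} shows that $M$ is Metzler. Also $RC\geq 0$, because $R\geq \theta^{\top}-|\psi|^{\top}H\geq 0$ by~\eqref{cond:ass:r-E.Ts} and $C\geq 0$. So $(RC,M)$ is an admissible pair for Definition~\ref{PZSD_def}.

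To show $M$ is Hurwitz, suppose it is not. Perron–Frobenius for Metzler matrices gives an eigenvalue $\lambda=\alpha(M)\geq 0$ with eigenvector $v\geq 0$, $v\neq 0$. By Proposition~\ref{prop_detect}, PZSD forces $RCv>0$. On the other hand,
\begin{align*}
RCv=-q^{\top}Mv=-\lambda q^{\top}v\leq 0,
\end{align*}
since $q\geq0$, $v\geq0$ and $\lambda\geq0$. This contradiction shows $\alpha(M)<0$. Consequently the error dynamics $\dot e=(A-LC)e$, which are exactly~\eqref{error_inf} under $w=Lv$, are exponentially stable and positive. Since $|L|\leq H$, this same $L$ witnesses that $(C,A)$ is $H$-detectable. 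I expect this step to be the crux, mainly in making sure that the Perron eigenvector argument uses only $q\ge0$ and not strict positivity of $q$.

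For the boundedness claim I use the stationary gain in~\eqref{eq:sol:c} to get
\begin{align*}
-\dot c=R(y-C\bar x)=Rv+RCe.
\end{align*}
This quantity is nonnegative, so $c(-T)$ is nondecreasing in $T$, and it suffices to bound $\int_{-T}^0 (Rv+RCe)\,d\tau$ uniformly.

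For the first term, $R\leq \theta^{\top}+|\psi|^{\top}H\leq 2\theta^{\top}$ by~\eqref{cond:ass:r-E.Ts}, so $\int Rv\leq 2\int\theta^{\top}v<\infty$.

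For the second term, $e(t)=e^{M(t+T)}e(-T)$ with $e(-T)=x(-T)-\chi$. Boundedness of the state on $(-\infty,0]$ bounds $e(-T)$ uniformly in $T$, provided $\chi$ is fixed or bounded. Then the Hurwitz property gives
\begin{align*}
\int_{-T}^0 RCe\,d\tau\leq RC(-M)^{-1}e(-T),
\end{align*}
which is bounded independently of $T$.

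Combining the two bounds, $c(-T)$ is bounded in $T$. The term $q_T(-T)^{\top}(\chi-x(-T))$ is also bounded, because $q_T(-T)\to q$ and $x$ is bounded. Hence~\eqref{opt_value_inf} is finite whenever the limit exists.

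A delicate point is that Theorem~\ref{MEE_LR_fin} uses the switching gain $L(t)$ near $t=0$ rather than the stationary $L$. I would handle this in one of two ways. The first is to observe that $R(t)\leq 2\theta^{\top}$ holds for every admissible $L(t)$, so the noise term is controlled in the same way. The second is to note that the switching occurs only on a horizon-independent finite window (by the remark on the suboptimality gap), on which the contribution of $RCe$ is bounded by the state bound.
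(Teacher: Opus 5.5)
Your proposal is correct and follows the paper's proof essentially step for step: the identity $q^{\top}(A-LC)=-(\theta^{\top}+\psi^{\top}L)C$, the Perron-eigenvector contradiction that needs only $q\geq 0$, and the split of $c_T(-T)$ into a noise term bounded via $\theta^{\top}+\psi^{\top}L_T\leq 2\theta^{\top}$ and an error term bounded by $-(A-LC)^{-1}e_T(-T)$ plus a horizon-independent switching window near $t=0$. Only your second patch handles the error term under the switching gain, since the first covers just the noise term. On that window $e_T$ is bounded because $e_T(t^*)$ is uniformly bounded and the window has fixed length, not by the state bound, since $\bar x$ need not be nonnegative. Also, $c_T(-T)$ is not literally nondecreasing in $T$, because $\bar x_T$ depends on $T$, but your argument never needs that.
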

\begin{proof} 
    By~\eqref{sol_inf:L}, $L=-\mathrm{diag}(\mathrm{sign}(\psi -q))H$.
    Note that
    \begin{align}\label{aux_L}
        -C^{\top}L^{\top}(\psi -q)=C^{\top}H^{\top}|\psi - q|
    \end{align}
    Substituting this into~\eqref{ARE}, it follows that
\begin{align}\label{aux_thm_detec}
        -(A-LC)^{\top}q=C^{\top}(\theta +L^{\top} \psi ).
    \end{align}
Since $A-HC$ is Metzler and $|L|\leq H$, the matrix $A-LC$
 is Metzler, so its spectral abscissa equals its Perron root $\lambda$, associated with a nonnegative vector $v_p$. Suppose that $A-LC$ is not Hurwitz, i.e. $\lambda= \alpha(A-LC)\geq 0$, with $v_p\geq 0$. Then the right hand side of~\eqref{aux_thm_detec} gives $q^{\top}(-\lambda)v_p =(C^{\top}(\theta+L^{\top}\psi ))^{\top}v_p$. Since $\lambda \geq 0$, $q\geq 0$ and $v_p\geq 0$, $q^{\top}(-\lambda)v_p\leq 0$, hence $(C^{\top}(\theta+L^{\top}\psi ))^{\top}v_p\leq 0$. But by Proposition~\ref{prop_detect} the PZSD property of $((\theta^{\top}+\psi^{\top}L)C, A-LC)$ requires $(\theta^{\top}+\psi^{\top}L)Cv_p>0$. This is a contradiction, so if $((\theta^{\top}+\psi^{\top}L)C, A-LC)$ is PZSD, $A-LC$ is Hurwitz. Consequently, under the disturbance $w^*(t)=Lv(t)$, the error $e(t)=x(t)-\bar x(t)$ satisfies the autonomous dynamics $\dot e(t)=(A-LC)e(t)$, which is asymptotically stable, hence $L$ stabilizes the error system.
 
Let $L_T(t)\in-\mathrm{diag}(\mathrm{sign}(\psi-q_T(t)))H$, $\bar x_T$ and $c_T$ denote the gain, estimator and constant of Theorem~\ref{MEE_LR_fin} on $[-T,0]$, and let $e_T=x-\bar x_T$. Under $w^*=L_Tv$, we have $e_T\geq 0$ (Remark~\ref{rem-error}), $y-C\bar x_T=v+Ce_T\geq 0$ and $0\leq\theta^{\top}+\psi^{\top}L_T\leq 2\theta^{\top}$. Hence,
$$0\leq c_T(-T)\leq 2\int_{-T}^0 \theta^{\top} v\, d\tau + 2\int_{-T}^0 \theta^{\top}Ce_T\, d\tau.$$
The first term is finite by hypothesis. For the second, since~\eqref{ODE} is autonomous with $q_T(0)=0$ and $q_T$ is monotone (Lemma~\ref{lem_monotone}), $L_T$ can be chosen equal to $L$ on $[-T,t^*]$, with $t^*$ independent of $T$. On this interval $e_T(t)=e^{(A-LC)(t+T)}e_T(-T)$, and since $A-LC$ is Metzler and Hurwitz, $\int_{-T}^{t^*}e_T\,dt\leq -(A-LC)^{-1}e_T(-T)$. On the finite interval $[t^*,0]$, $e_T$ remains bounded. Since $e_T(-T)=x(-T)-\chi$ is bounded, $c_T(-T)$ is bounded in $T$, then the minimum value of the infinite horizon case of Problem~\ref{MEE_LR_fin} is finite whenever the limit exists.
\end{proof}
\begin{rem}
In quadratic minimum-energy estimation~\cite[Sec.~23.2.1]{hespanha}, positive definite weights make the problem well posed, while the optimal gain is stabilizing if $(A,C)$ is detectable and $(A,\bar B)$ is controllable~\cite[Thm.~23.2]{hespanha}, where $\bar B$ is the matrix through which the process noise enters. With $\bar B=I_n$, the controllability condition holds trivially. Similarly, in the $L_1$-MEE problem, $\theta\geq H^{\top}|\psi|$ makes the problem well posed, since $\psi^{\top}w+\theta^{\top}v\geq(\theta-H^{\top}|\psi|)^{\top}v\geq 0$ for all admissible disturbances, which requires $\theta\geq 0$ but allows $\psi$ of either sign. By Theorem~\ref{thm_detect_1} a finite solution of~\eqref{ARE} together with PZSD of the closed-loop pair guarantees a stabilizing stationary gain, in which case $(C,A)$ is $H$-detectable.
\end{rem}

The next corollary gives a simple but restrictive \textit{a priori} condition that implies PZSD for all observer gains resulting from Theorem~\ref{thm_MEE_LR_inf}, while a more sophisticated condition is given in Theorem~\ref{detec_apriori}.

\begin{cor}
    Suppose $(\theta-H^{\top}|\psi|)^{\top}C>0$. Then $((\theta^{\top}+\psi^{\top}L)C, A-LC)$ is PZSD for any $L$ satisfying $|L|\leq H$. Moreover, if $(i)$, $(ii)$ in Theorem~\ref{thm_MEE_LR_inf} hold and $L$ satisfies~\eqref{sol_inf:L}, then $A-LC$ is Hurwitz.
\end{cor}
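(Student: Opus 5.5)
The plan has two steps. First I show that the weight vector $(\theta^{\top}+\psi^{\top}L)C$ is strictly positive entrywise for every admissible gain, which makes PZSD automatic. Then I feed this into Theorem~\ref{thm_detect_1}.

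\textbf{Step 1: strict positivity of the output weight.} Fix any $L$ with $|L|\leq H$. Entrywise,
\begin{align*}
\psi^{\top}L \geq -|\psi|^{\top}|L| \geq -|\psi|^{\top}H .
\end{align*}
Since $C\geq 0$, this gives
\begin{align*}
(\theta^{\top}+\psi^{\top}L)C \geq (\theta-H^{\top}|\psi|)^{\top}C > 0 .
\end{align*}
The first inequality uses that right-multiplying a nonnegative row vector by $C\geq0$ preserves order. The last inequality is the hypothesis. Hence $N:=(\theta^{\top}+\psi^{\top}L)C$ is strictly positive entrywise, and in particular $N\geq 0$.

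\textbf{Step 2: PZSD of the pair.} Because $|L|\leq H$ and $A-HC$ is Metzler by Assumption~\ref{ass}, the off-diagonal entries satisfy $(A-LC)_{ij}\geq (A-HC)_{ij}\geq 0$, using $C\geq 0$. So $M:=A-LC$ is Metzler, and $(N,M)$ fits the setting of Definition~\ref{PZSD_def}. By Proposition~\ref{prop_detect}, it suffices to show $Nv>0$ for every nonnegative eigenvector $v$ of $M$ with eigenvalue $\lambda\geq0$. Such a $v$ is nonnegative and nonzero, so it has at least one strictly positive entry. Since $N$ is strictly positive entrywise, $Nv>0$. Hence $(N,M)$ is PZSD.

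Alternatively, the same conclusion follows directly from Definition~\ref{PZSD_def}. If $z\equiv0$, then $x(t,x_0)\geq0$ and $N>0$ force $x(t,x_0)\equiv 0$, so $x(t,x_0)\to0$ trivially.

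\textbf{Step 3: the Hurwitz claim.} Suppose (i) and (ii) of Theorem~\ref{thm_MEE_LR_inf} hold. Then \eqref{ARE} has the finite minimal solution $q\geq0$, and $L$ from~\eqref{sol_inf:L} satisfies $|L|\leq H$. By Step 2 the pair $((\theta^{\top}+\psi^{\top}L)C, A-LC)$ is PZSD. Theorem~\ref{thm_detect_1} then yields that $A-LC$ is Hurwitz.

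The only point needing care, and the main obstacle, is the entrywise bound $\psi^{\top}L\geq -|\psi|^{\top}H$ for general $L$ with $|L|\leq H$, which is not necessarily of the form~\eqref{sol_inf:L}. It follows termwise: $\psi_iL_{ij}\geq -|\psi_i|H_{ij}$ for each $i,j$.
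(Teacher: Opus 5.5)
Your proposal is correct and follows the paper's proof essentially verbatim: the entrywise bound $(\theta^{\top}+\psi^{\top}L)C\geq(\theta-H^{\top}|\psi|)^{\top}C>0$ gives PZSD via Proposition~\ref{prop_detect}, and Theorem~\ref{thm_detect_1} then gives the Hurwitz claim. Your explicit check that $A-LC$ is Metzler, which the paper leaves implicit, is a valid addition, and so is your alternative argument directly from Definition~\ref{PZSD_def}.
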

\begin{proof}
    Since $(\theta+L^{\top}\psi )^{\top}C\geq (\theta-H^{\top}|\psi|)^{\top}C>0$, the row vector $(\theta^{\top}+\psi^{\top}L)C$ is strictly positive, so  $(\theta^{\top}+\psi^{\top}L)Cv>0$ for every eigenvector $v \geq 0$, $v\neq 0$ of $A-LC$. Hence $((\theta^{\top}+\psi^{\top}L)C, A-LC)$ is PZSD by Proposition~\ref{prop_detect}. The remainder follows from Theorem~\ref{thm_detect_1}.
\end{proof}

The following theorem shows that PZSD of $((\theta^{\top}+\psi^{\top}L)C, A-LC)$ holds whenever the algebraic equation admits a strictly positive solution, that is, whenever every nonzero state is assigned a positive cost, analogously to the observability assumption in semilinear dynamic programming~\cite[Assumption~2.1(d), Prop.~2]{yuchao_semilinear} and provided the spectral abscissa of $A-LC$ is nonzero.

\begin{thm}\label{thm_detect_2}
    Assume conditions $(i)$ and $(ii)$ in Theorem~\ref{thm_MEE_LR_inf} hold. Let $L $ satisfy~\eqref{sol_inf:L}. If the minimal solution $q$ of $(ii)$ satisfies $q>0$ and the spectral abscissa $\alpha(A-LC)\neq 0$, then the pair $((\theta^{\top}+\psi^{\top}L)C, A-LC)$ is PZSD and the gain $L$ stabilizes the error system~\eqref{error_inf} under the disturbance $w^*(t)=Lv(t)$. Moreover, if $\alpha(A-LC)=0$ then the pair $((\theta^{\top}+\psi^{\top}L)C, A-LC)$ is not PZSD.
\end{thm}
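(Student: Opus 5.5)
The argument rests on the identity \eqref{aux_thm_detec} from the proof of Theorem~\ref{thm_detect_1}. Write $M:=A-LC$ and $N:=(\theta^{\top}+\psi^{\top}L)C$. Substituting \eqref{aux_L} into \eqref{ARE} gives
\begin{align*}
-M^{\top}q=N^{\top}, \quad\text{i.e.}\quad q^{\top}M=-N.
\end{align*}
Under Assumption~\ref{ass} and $|L|\leq H$, the matrix $M$ is Metzler. Moreover $N\geq (\theta-H^{\top}|\psi|)^{\top}C\geq 0$, so the pair $(N,M)$ fits the setting of Definition~\ref{PZSD_def}. The plan is to test PZSD through Proposition~\ref{prop_detect}. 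For each nonnegative eigenvector $v\neq 0$ of $M$ with eigenvalue $\lambda\geq 0$, multiply the identity by $v$ to get
\begin{align*}
\lambda\, q^{\top}v=-Nv.
\end{align*}
Since $q>0$ and $v\geq 0$ is nonzero, we have $q^{\top}v>0$.

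First consider eigenvalues $\lambda>0$. The left side $\lambda q^{\top}v$ is then strictly positive, while $-Nv\leq 0$ because $N\geq 0$ and $v\geq 0$. This is a contradiction, so $M$ has no nonnegative eigenvector with a strictly positive eigenvalue. By Perron–Frobenius, $\alpha(M)$ is an eigenvalue of the Metzler matrix $M$ with a nonnegative eigenvector, so $\alpha(M)\leq 0$. The hypothesis $\alpha(M)\neq 0$ then forces $\alpha(M)<0$, and $M$ is Hurwitz.

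Next consider $\lambda=0$. Any eigenvalue $\lambda\geq 0$ would satisfy $\lambda\leq\alpha(M)<0$, which is impossible. So no nonnegative eigenvector has a nonnegative eigenvalue, and the condition of Proposition~\ref{prop_detect} holds vacuously. Hence $(N,M)$ is PZSD. Stabilization of \eqref{error_inf} under $w^*=Lv$ then follows as in Theorem~\ref{thm_detect_1}: the error obeys $\dot e=Me$, and $M$ is Hurwitz. Alternatively, once PZSD is known, Theorem~\ref{thm_detect_1} can be invoked directly.

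For the converse claim, suppose $\alpha(M)=0$. Perron–Frobenius gives a nonnegative eigenvector $v_p\neq 0$ with $Mv_p=0$. The identity with $\lambda=0$ yields $Nv_p=0$. This violates the condition $Nv_p>0$ required by Proposition~\ref{prop_detect}, so the pair is not PZSD. This direction does not even use $q>0$.

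The main point of care is the vacuous-truth step. One must make sure Proposition~\ref{prop_detect} is read as quantifying over nonzero nonnegative eigenvectors, and that the Perron–Frobenius fact used, namely that the spectral abscissa of a Metzler matrix is an eigenvalue with a nonnegative eigenvector, holds without any irreducibility assumption. It does, for general Metzler matrices, as already used in Theorem~\ref{thm_detect_1}. Otherwise the argument is a direct sign comparison in $\lambda q^{\top}v=-Nv$.
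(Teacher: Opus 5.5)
Your proposal is correct and follows essentially the paper's argument. Both pair the identity $-(A-LC)^{\top}q=C^{\top}(\theta+L^{\top}\psi)$ with a nonnegative (Perron) eigenvector: $q>0$ rules out a positive Perron root, so $\alpha(A-LC)\neq 0$ forces Hurwitz and hence PZSD, and $\alpha(A-LC)=0$ forces $(\theta^{\top}+\psi^{\top}L)Cv_p=0$. Your side remark that this last direction does not actually require $q>0$ is also correct.
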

\begin{proof}
    Suppose there exists a vector $q\in \mathbb R^n_+$ solving~\eqref{ARE} with $q>0$. Recall~\eqref{aux_L}. Substituting this into~\eqref{ARE}, it follows that~\eqref{aux_thm_detec}.
    If $A-LC$ is Hurwitz, every trajectory of $\dot e = (A-LC)e$ converges to zero, hence, the pair $((\theta^{\top}+\psi^{\top}L)C, A-LC)$ is PZSD by Definition~\ref{PZSD_def} and $L$ stabilizes the error system~\eqref{error_inf} under optimal disturbance $w^*(t)=Lv(t)$.

    Suppose $A-LC$ is not Hurwitz. Since $A-HC$ is Metzler and $|L|\leq H$, also $A-LC$ is Metzler. Therefore, $A-LC$ has a nonnegative Perron eigenvector $v_p \geq 0$, $v_p\neq 0$, of $A-LC$ associated with the Perron root $\lambda= \alpha(A-LC)\neq 0$ satisfying $(A-LC)v_p=\lambda v_p$. The right hand side of~\eqref{aux_thm_detec} can be written as
    \begin{align}\label{30}
        q^{\top}(-(A-LC))v_p=q^{\top}(-\lambda)v_p.
    \end{align}
    Since $q>0$, $v_p\geq 0$, and $\lambda >0$ it follows that $q^{\top}(-\lambda)v_p<0$. This contradicts the left hand side of~\eqref{aux_thm_detec}, $(C^{\top}(\theta+L^{\top}\psi ))^{\top}v_p \geq 0$. Therefore if $q>0$ and $\lambda \neq 0$ the inequality cannot hold. Consequently, if there exists a solution $q>0$ to~\eqref{ARE} and $\alpha(A-LC)\neq 0$, the pair $((\theta^{\top}+\psi^{\top}L)C, A-LC)$ is PZSD and $L$ stabilizes the error system~\eqref{error_inf} under optimal disturbance $w^*(t)=Lv(t)$.
    
    Finally, under the same hypothesis $q>0$, if $\alpha(A-LC)=0$, the Perron root is $\lambda=0$ with $v_p\neq 0$. Hence,~\eqref{30} gives $q^{\top}(-\lambda)v_p=(C^{\top}(\theta+L^{\top}\psi ))^{\top}v_p=0$. Thus, $((\theta^{\top}+\psi^{\top}L)C, A-LC)$ is not PZSD by Proposition~\ref{prop_detect}.
\end{proof}

The next theorem gives an \textit{a priori} PZSD condition, valid for every observer gain from Theorem~\ref{thm_MEE_LR_inf}, based on the same reasoning as~\cite[Thm.~9]{TAC_paper}. 

\begin{thm}\label{detec_apriori}
    Assume $(i)$-$(ii)$ of Theorem~\ref{thm_MEE_LR_inf}. Let $h\in \mathbb R$ such that $A-HC+h I_n\geq 0$ and $h >0$. Suppose also that
    \begin{align}\label{apriori_detect}
        (\theta^{\top}-|\psi^{\top}|H)C\sum_{i=0}^{n-1} \big(\frac{1}{h}(A-HC)+I \big)^i>0
    \end{align}
    Then $((\theta^{\top}+\psi^{\top}L)C, A-LC)$ is PZSD for any $L$ satisfying $|L|\leq H$.
\end{thm}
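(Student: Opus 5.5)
We prove PZSD through the Hautus-type criterion of Proposition~\ref{prop_detect}: it suffices to show $(\theta^{\top}+\psi^{\top}L)Cv>0$ for every nonnegative eigenvector $v\neq 0$ of $M:=A-LC$ with eigenvalue $\lambda\geq 0$. Fix $L$ with $|L|\leq H$ and write $N:=(\theta^{\top}+\psi^{\top}L)C$ and $N_0:=(\theta^{\top}-|\psi^{\top}|H)C$. Since $\psi^{\top}L\geq -|\psi^{\top}|H$ and $C\geq 0$, we have $N\geq N_0\geq 0$ entrywise. For $v\geq 0$ this gives $Nv\geq N_0v$, so it is enough to show $N_0v>0$.

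The key comparison is between the two Metzler matrices $M$ and $A-HC$. Because $|L|\leq H$ and $C\geq 0$, we have $LC\leq HC$, hence $M=A-LC\geq A-HC$ entrywise. Set $P:=\frac{1}{h}(A-HC)+I\geq 0$, which is nonnegative by the hypothesis on $h$. Then $\frac{1}{h}M+I\geq P\geq 0$. For an eigenvector $v\geq 0$ of $M$ with eigenvalue $\lambda\geq 0$ we get
\begin{align*}
    \big(\tfrac{1}{h}M+I\big)v=\big(1+\tfrac{\lambda}{h}\big)v .
\end{align*}
Applying this $i$ times and using $\frac{1}{h}M+I\geq P$ together with $v\geq 0$ yields $P^iv\leq(1+\lambda/h)^iv$ for every $i\geq 0$.

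Combining the two bounds,
\begin{align*}
    N_0\sum_{i=0}^{n-1}P^iv\leq \Big(\sum_{i=0}^{n-1}\big(1+\tfrac{\lambda}{h}\big)^i\Big)N_0v .
\end{align*}
By \eqref{apriori_detect}, the row vector $N_0\sum_{i=0}^{n-1}P^i$ is strictly positive. Since $v\geq 0$ and $v\neq 0$, the left side is therefore strictly positive. The scalar factor on the right is positive, so $N_0v>0$, and hence $Nv\geq N_0v>0$. Proposition~\ref{prop_detect} then gives PZSD of $((\theta^{\top}+\psi^{\top}L)C, A-LC)$.

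The only delicate step is the entrywise inequality $P^iv\leq(1+\lambda/h)^iv$. We prove it by induction. The induction requires $\frac{1}{h}M+I$ and $P$ both to be nonnegative, so that multiplying a nonnegative vector preserves order. This holds because $\frac{1}{h}M+I\geq P\geq 0$. The hypotheses $(i)$--$(ii)$ of Theorem~\ref{thm_MEE_LR_inf} are not needed in this argument beyond ensuring that $L$ is well defined.
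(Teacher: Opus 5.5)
Your proof is correct and is essentially the paper's argument. The paper writes the single chain $N\sum_{i}(\tfrac1h M+I)^i v = Nv\sum_i(1+\tfrac{\lambda}{h})^i \ge N_0\sum_i P^i v>0$, whereas you first prove $P^i v\le(1+\tfrac{\lambda}{h})^i v$ by induction, conclude $N_0v>0$, and then pass to $Nv\ge N_0v$; these are the same entrywise comparisons applied in a different order.

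One point should be stated explicitly, and it corrects your closing remark. $N_0\ge 0$ is needed when you multiply $P^iv\le(1+\lambda/h)^iv$ by $N_0$, and also for $N\ge 0$ in Definition~\ref{PZSD_def}. This follows from $\theta\ge H^{\top}|\psi|$ in Assumption~\ref{ass}, so that standing assumption is genuinely used, not only the well-definedness of $L$.
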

\begin{proof} Let $M:=A-LC$, $N:=(\theta^{\top}+\psi^{\top}L)C$, and $\hat M:=A-HC$, $\hat N:=(\theta^{\top}-|\psi^{\top}|H)C$. Since $|L|\leq H$ and $C\geq 0$, we have $M \geq \hat M$ and $N \geq \hat N \geq 0$, the latter by $\theta \geq H^{\top}|\psi|$, and $\frac{1}{h}M+I \geq \frac{1}{h}\hat M+I\geq 0$ by hypothesis. For any nonnegative eigenvector $v \geq 0$, $v\neq 0$ of $M$ with eigenvalue $\lambda \geq 0$,
\begin{align*}
    N\sum_{i=0}^{n-1}\Big( \frac{1}{h}M+I\Big)^i v&= Nv \sum_{i=0}^{n-1}\Big( \frac{\lambda}{h}+1\Big)^i\\
    &\geq \hat N \sum_{i=0}^{n-1}\Big( \frac{1}{h}\hat M + I\Big)^i v >0
\end{align*}
where the equality uses that $v$ is an eigenvector, the middle inequality the entrywise bounds $N \geq \hat N$, $\frac{1}{h}M+I\geq \frac{1}{h}\hat M +I \geq 0$ with $v \geq 0$, and the last inequality uses~\eqref{apriori_detect} with $v \geq 0$, $v \neq 0$. Since $\sum_{i=0}^{n-1} \big(\frac{\lambda}{h}+1 \big)^i\geq n >0$, this gives $(\theta^{\top}+\psi^{\top}L)Cv=Nv>0$. That the pair $((\theta^{\top}+\psi^{\top}L)C, A-LC)$ is PZSD then follows from Proposition~\ref{prop_detect} exactly as in the proof of~\cite[Thm. 9]{TAC_paper} with the output matrix $s^{\top}-r^{\top}K$ replaced by $(\theta^{\top}+\psi^{\top}L)C$. 
\end{proof}
\subsection{Linear program formulation}\label{Section_LP}
Analogous to the regulation case~\cite{TAC_paper}, it is possible to obtain a solution to the algebraic equation~\eqref{ARE} through the linear program
\begin{align}\label{primal_LP}
    \mathrm{Maximize}\quad & 
    \mathbb 1^{\top} q
    \hspace{1mm} \text{over} \hspace{1mm} q\in \mathbb R^n_+, \hspace{1mm} \zeta \in \mathbb R^n_+\notag\\
    \mathrm{Subject \hspace{0.5mm}to}\quad & 
    C^{\top} H^{\top} \zeta \le C^{\top} \theta + A^{\top} q
    \\
    &-\zeta \leq \psi -  q \leq \zeta \notag
\end{align}
\begin{lem}\label{Lem_dual_primal}
The following are equivalent:
\begin{enumerate}[(i)]
    \item The primal linear program~\eqref{primal_LP} has a bounded solution.
    \item The dual linear program
    \begin{align}\label{dual_LP}
    &\mathrm{Minimize} \hspace{7mm} 
    \theta^{\top} C e + \psi^{\top}\nu \hspace{1mm} \text{with}\hspace{1mm}  e\in \mathbb R^n_+ \notag
    \\
    &\mathrm{Subject \hspace{0.5mm}to}\hspace{8mm} 
     Ae - \nu \leq - \mathbb 1
    \\
     &\hspace{17mm}-HCe \leq  \nu \leq HCe\notag
    \end{align}
    is feasible.
    \item There exists a $D\in \mathbb R^{n \times n}$ satisfying $-I\leq D\leq I$ such that $A+DHC$ is Hurwitz.
\end{enumerate}
\end{lem}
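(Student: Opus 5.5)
The equivalence (i)$\Leftrightarrow$(ii) will come from LP duality, and (ii)$\Leftrightarrow$(iii) from the characterization of Hurwitz Metzler matrices by linear inequalities.

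\textbf{Step 1: (i)$\Leftrightarrow$(ii).} First, the primal~\eqref{primal_LP} is always feasible: take $q=0$ and $\zeta=|\psi|$. The constraint then reads $C^{\top}H^{\top}|\psi|\le C^{\top}\theta$, which holds by Assumption~\ref{ass}~\eqref{cond:ass:r-E.Ts} and $C\ge 0$. I will write the Lagrangian with multipliers $e\ge0$ for $C^{\top}H^{\top}\zeta-A^{\top}q\le C^{\top}\theta$, $\mu_1\ge0$ for $q-\zeta\le\psi$, and $\mu_2\ge0$ for $-q-\zeta\le-\psi$, and set $\nu=\mu_1-\mu_2$.

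The dual constraints are obtained as follows.
\begin{itemize}
\item The coefficient of $q\ge0$ gives $-Ae+\mu_1-\mu_2\ge\mathbb 1$, that is, $Ae-\nu\le-\mathbb 1$.
\item The coefficient of $\zeta\ge0$ gives $HCe-\mu_1-\mu_2\ge0$. Eliminating $\mu_1,\mu_2$ (choose $\mu_1=\nu^+$, $\mu_2=\nu^-$), this is equivalent to $|\nu|\le HCe$.
\end{itemize}
The dual objective is $\theta^{\top}Ce+\psi^{\top}(\mu_1-\mu_2)=\theta^{\top}Ce+\psi^{\top}\nu$, which recovers~\eqref{dual_LP}. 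Since the primal is feasible, strong duality says the primal is bounded if and only if the dual is feasible. The bookkeeping in the $\mu_1,\mu_2\to\nu$ reduction is routine but needs care.

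\textbf{Step 2: (ii)$\Rightarrow$(iii).} Let $(e,\nu)$ be dual feasible. Since $|\nu_i|\le (HCe)_i$, I can pick $d_i\in[-1,1]$ with $\nu_i=-d_i(HCe)_i$. This uses that if $(HCe)_i=0$ then $\nu_i=0$ and any $d_i$ works. Set $D=\mathrm{diag}(d)$. Then $\nu=-DHCe$ and the first dual constraint becomes $(A+DHC)e\le-\mathbb 1<0$ with $e\ge0$.

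Next, $A+DHC\ge A-HC$ off the diagonal, because $DHC$ is diagonal times a nonnegative matrix. So $A+DHC$ is Metzler by~\eqref{cond:ass:A-HC}. A Metzler matrix $M$ admitting $e\ge0$ with $Me<0$ is Hurwitz; this is the standard linear Lyapunov characterization~\cite{Briat_matrices,Farina}.

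\textbf{Step 3: (iii)$\Rightarrow$(ii).} Here is the main obstacle: the statement permits general $D$ with $-I\le D\le I$, not only diagonal ones. I would first try to restrict to $D$ for which $A+DHC$ is Metzler. If $D$ is diagonal with $|D|\le I$, then $A+DHC$ is Metzler and Hurwitz, so $e:=-(A+DHC)^{-1}\mathbb 1\ge0$. Taking $\nu=-DHCe$ gives $|\nu|\le HCe$ and $Ae-\nu=(A+DHC)e=-\mathbb 1$, so the dual is feasible.

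For a nondiagonal $D$ the matrix need not be Metzler, and the implication could fail. I would handle this either by interpreting $D$ as diagonal, consistently with~\eqref{sol_inf:L}, or by replacing $D$ with a diagonal one. The candidate replacement is the sign pattern $\mathrm{diag}(\mathrm{sign}(\psi-q))$ coming from an optimal primal solution, if complementary slackness allows it.
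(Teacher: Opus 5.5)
Your proof is correct and follows the paper's route. For (i)$\Leftrightarrow$(ii) you use LP duality anchored on the feasible point $q=0$, $\zeta=|\psi|$; your appeal to strong duality is the right justification for ``bounded $\Rightarrow$ dual feasible'', which weak duality alone does not give. For (ii)$\Leftrightarrow$(iii) you substitute $\nu=-DHCe$ and use the Metzler--Hurwitz characterization; your choice $e=-(A+DHC)^{-1}\mathbb 1$ is just a tidier version of the paper's rescaling of a positive $v$ with $(A+DHC)v<0$.

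The ``main obstacle'' you raise in Step~3 does not exist. In the paper's entrywise ordering, $-I\le D\le I$ forces every off-diagonal entry of $D$ to be both $\ge 0$ and $\le 0$. So $D$ is necessarily diagonal with $D_{ii}\in[-1,1]$, and your diagonal argument already proves (iii)$\Rightarrow$(ii) in full. Drop the alternative of borrowing $\mathrm{diag}(\mathrm{sign}(\psi-q))$ from a primal optimum: it would be circular, since it presupposes (i).
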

\begin{proof}
    $(i)\Longleftrightarrow (ii)$ This follows from the weak duality of the linear programs, together with feasibility of the primal. Taking $q=0$ and $\zeta=|\psi|$ renders the primal feasible, since $C^{\top} H^{\top}|\psi|\leq C^{\top}\theta$ by $\theta\geq H^{\top}|\psi|$ and $C \geq 0$. Hence the primal~\eqref{primal_LP} is bounded if and only if the dual~\eqref{dual_LP} is feasible.

    $(ii)\Longrightarrow (iii)$ For any feasible $(e, \nu)$, the constraint $-HCe \leq \nu\leq HCe$ lets us write $\nu=-DHCe$ for some diagonal $D$ with $-I\leq D \leq I$. The remaining constraint $Ae-\nu \leq -\mathbb 1$ then reads $(A+DHC)e\leq -\mathbb 1<\mathbb 0$. Since $A-HC$ is Metzler and $-I\leq D \leq I$, the matrix $A+DHC$ is Metzler. As $e\geq \mathbb 0$, it follows by item 1 and 18 of~\cite[Thm. 5.1]{fiedler1986} that $A+DHC$ is Hurwitz. Thus there exists $-I\leq D \leq I$ with $A+DHC$ Hurwitz.
    
    $(iii)\Longrightarrow (ii)$ By item 2 and 18 of~\cite[Thm. 5.1]{fiedler1986} there exists a vector $v>0$ such that $(A+DHC)v<\mathbb 0$. Let $\alpha=\min_i|((A+DHC)v)_i|$ and take $e=\frac{1}{\alpha}v$ and $\nu=-DHC e$, then $Ae-\nu=(A+DHC)e\leq -\mathbb 1$, $-HCe\leq \nu \leq HCe$ and $e\geq \mathbb 0$.
\end{proof}

The LP formulation therefore serves two purposes. First, it provides a scalable computational method for obtaining the stationary solution. Second, its dual variables characterize when the disturbance constraints admit a stabilizing observer interpretation.

The next theorem shows that, when the linear program~\eqref{primal_LP} is bounded, its optimizer is a solution of the algebraic equation~\eqref{ARE}, so that the linear program provides an alternative to the value iteration scheme presented in Theorem~\ref{Cont_VI}.
\begin{thm}\label{thm_LP1}
    If the linear program~\eqref{primal_LP} has a bounded value, then the optimizer $q$ solves the algebraic equation~\eqref{ARE}.
\end{thm}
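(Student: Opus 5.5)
The plan is to remove the auxiliary variable $\zeta$ and reduce the linear program~\eqref{primal_LP} to an inequality in $q$ alone. Then I will show, by a one-coordinate perturbation, that every inequality must be tight at an optimizer.

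First, since~\eqref{primal_LP} is feasible (as in the proof of Lemma~\ref{Lem_dual_primal}) and has a bounded value, it attains its optimum. This is standard for linear programs, so an optimizer $(q,\zeta)$ exists. Write
$$F(q):=C^{\top}\theta+A^{\top}q-C^{\top}H^{\top}|\psi-q|.$$
The constraint $-\zeta\le\psi-q\le\zeta$ is equivalent to $\zeta\ge|\psi-q|$. Because $C^{\top}H^{\top}\ge 0$, replacing $\zeta$ by $|\psi-q|$ only decreases the left-hand side $C^{\top}H^{\top}\zeta$, and the objective does not depend on $\zeta$. Hence $(q,|\psi-q|)$ is also optimal. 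The program is therefore equivalent to maximizing $\mathbb 1^{\top}q$ over $q\ge 0$ subject to $F(q)\ge 0$. It then suffices to show that an optimizer satisfies $F(q)=0$, since this is exactly~\eqref{ARE} transposed.

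Suppose instead that $F_i(q)>0$ for some index $i$, and set $\tilde q=q+\varepsilon e_i$ with $\varepsilon>0$. Clearly $\tilde q\ge 0$. For the effect on $F$, the triangle inequality gives $|\psi_i-\tilde q_i|-|\psi_i-q_i|\le\varepsilon$, and every other entry of $|\psi-q|$ is unchanged. Hence, as in the monotonicity step of the proof of Theorem~\ref{Cont_VI},
$$F(\tilde q)-F(q)\ \ge\ \varepsilon\,(A^{\top}-C^{\top}H^{\top})e_i=\varepsilon\,(A-HC)^{\top}e_i.$$
For $j\neq i$, the $j$th entry of $(A-HC)^{\top}e_i$ is the off-diagonal entry $(A-HC)_{ij}\ge 0$, by the Metzler property in Assumption~\ref{ass}~\eqref{cond:ass:A-HC}. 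So $F_j(\tilde q)\ge F_j(q)\ge 0$. For $j=i$, $F_i$ is continuous and $F_i(q)>0$, so $F_i(\tilde q)>0$ for $\varepsilon$ small enough. Thus $(\tilde q,|\psi-\tilde q|)$ is feasible, and $\mathbb 1^{\top}\tilde q=\mathbb 1^{\top}q+\varepsilon$, which contradicts optimality. Therefore $F(q)=0$ with $q\in\mathbb R^n_+$, that is, $q$ solves~\eqref{ARE}.

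The main obstacle is the coordinate perturbation. The diagonal entry $(A-HC)_{ii}$ may be negative, so the increase in $q_i$ can decrease $F_i$. This is handled only by the slack $F_i(q)>0$ together with $\varepsilon$ small, while the Metzler structure protects all the other constraints. A secondary point is justifying that the optimum is attained. This follows from the fundamental theorem of linear programming, since the program is feasible and bounded.
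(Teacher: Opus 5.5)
Your proof is correct and follows the same route as the paper: at an optimizer one may take $\zeta=|\psi-q|$, and the constraint $C^{\top}H^{\top}\zeta\le C^{\top}\theta+A^{\top}q$ then holds with equality, which is exactly~\eqref{ARE}. The paper simply asserts both equalities for the optimizer. Your coordinate perturbation, with the Metzler property of $A-HC$ protecting the other constraints, together with replacing $\zeta$ by $|\psi-q|$, supplies the justification the paper's two-line proof leaves implicit. The replacement is needed because $\zeta_k$ is not pinned down when row $k$ of $HC$ vanishes.
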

\begin{proof}
    Let $q$, $\zeta$ optimize~\eqref{primal_LP}. Then $C^{\top}H^{\top} \zeta=C^{\top}\theta+A^{\top} q$ and $\zeta=|\psi - q|$, hence
    \begin{align*}
        0&=q^{\top}A+\theta^{\top}C-\zeta^{\top}HC\\
        &=q^{\top}A+\theta^{\top}C-|\psi^{\top}-q^{\top}|HC,
    \end{align*}
    so~\eqref{ARE} holds.
\end{proof}

Note that existence of a bounded solution to the LP~\eqref{primal_LP} is not necessary for the existence of a solution to the algebraic equation~\eqref{ARE}. 
\begin{exmp}
    Let
    \begin{align*}
        A&=\begin{bmatrix}
            -1 & 0 & 0\\
            0 & -1 & 0 \\
            0 & 0 & 1
        \end{bmatrix}; \hspace{1mm} C =\begin{bmatrix}
            1 & 0 & 0
        \end{bmatrix}; \hspace{2mm} H= \begin{bmatrix}
            1 \\ 0 \\ 0
        \end{bmatrix}\\
        \psi &=\begin{bmatrix}
            2 & 0 & 0
            \end{bmatrix}^{\top}; ~~~~\hspace{2mm} \theta= 3.
    \end{align*}
    The third state is unstable and unobservable from $C$. Moreover, the pair $((\theta^{\top}+\psi^{\top}L)C, A-LC)$ is not PZSD for any $|L|\leq H$, since $H$ acts only on the first state, so $A-LC$ has the eigenvector $\mathbb{\mathrm{e}}_3$ with eigenvalue $\lambda=1$   and $(\theta^{\top}+\psi^{\top}L)C\mathbb{\mathrm{e}}_3=0$ as $C\mathbb{\mathrm{e}}_3=0$. The parameters satisfy all conditions in Assumption~\ref{ass}, $A-HC=\mathrm{diag}(-2, -1, 1)$, $3=\theta\geq H^{\top}|\psi|=2$, $H,C\geq 0$. The equation~\eqref{ARE} has a solution
    \begin{align}
        q=\begin{bmatrix}
            2.5 & 0 & 0
        \end{bmatrix}^{\top}.
    \end{align}
    However, the third entry of the $q$ vector in the linear program is unbounded. Thus~\eqref{ARE} admits a finite solution even though the linear program~\eqref{primal_LP} is unbounded, showing that the boundedness of the LP is sufficient but not necessary for solvability of the algebraic equation.
\end{exmp}
Additionally, the next example illustrates that the stationary gain associated with the minimal solution of~\eqref{ARE} need not be unique, and that only some of the stationary gains stabilize the error dynamics.
\begin{exmp}\label{ex:nonunique_gain} 
Let
    \begin{align*}
        A&= \begin{bmatrix}
            -1 & \frac{1}{2}\\
            \frac{3}{2} & -\frac{1}{2}
        \end{bmatrix}; \hspace{1mm} C= \begin{bmatrix}
            1 & 0 
        \end{bmatrix}; \hspace{1mm} H = \begin{bmatrix}
            1 \\
            \frac{1}{2}
        \end{bmatrix}\\
        \psi &= \begin{bmatrix}
            0\\
            -2
        \end{bmatrix}^{\top}; ~~~~~~\hspace{1mm} \theta =1.
    \end{align*}
    The Perron Frobenius eigenvalue of $A$ is $\lambda \approx 0.151>0$ indicating that the open loop system is unstable. $A-HC$ is Metzler and $\theta -H^{\top}|\psi|=0$. The algebraic equation~\eqref{ARE} has the solution $q=\begin{bmatrix}
        0&0
    \end{bmatrix}^{\top}$. Then the first entry of $\psi - q$ is zero so $\mathrm{sign}(\psi -q)_1 \in [-1, 1]$ is set-valued, and the corresponding entry of $L$ can be any value in $[-H_1, H_1]$. Choosing $\text{sign}(\psi_1-q_1)=-1$ gives $L=H$ which gives  
    \begin{align*}
       A-LC= \begin{bmatrix}
            -2 & \frac{1}{2} \\
            1 & -\frac{1}{2}
        \end{bmatrix}
    \end{align*}
which is Hurwitz with spectral abscissa $\alpha(A-LC)=-0.22$. So this $L$ stabilizes the error system. Choosing $L=\begin{bmatrix}
    -1&1/2
\end{bmatrix}^{\top}$ gives
\begin{align*}
       A-LC= \begin{bmatrix}
           0 & \frac{1}{2} \\
            1 & -\frac{1}{2}
        \end{bmatrix}
    \end{align*}
which has an eigenvalue $0.5>0$, so this $L$ is not stabilizing. Both choices of $L$ are compatible with the same solution of~\eqref{ARE}, but only one stabilizes the error system. This illustrates that the algebraic equation alone does not uniquely determine the stabilizing observer gain. 
\end{exmp}
Theorem~\ref{thm_LP1} shows that a bounded linear program yields a solution of~\eqref{ARE}. The next theorem provides a converse. Under PZSD of the closed-loop pair associated with a solution of~\eqref{ARE}, this solution maximizes the linear program, so that the linear program and the value iteration of Theorem~\ref{Cont_VI} return the same stationary solution.
\begin{thm}\label{thm_q_LP1}
    Suppose $q$ solves the algebraic equation~\eqref{ARE} and $L$ satisfies~\eqref{sol_inf:L}. If the pair $((\theta^{\top}+\psi^{\top}L)C, A-LC)$ is PZSD, then $q$ maximizes the LP~\eqref{primal_LP} with $\zeta=|\psi - q|$.
\end{thm}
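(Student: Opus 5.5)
The plan is to use LP duality. By Theorem~\ref{thm_detect_1}, PZSD of $((\theta^{\top}+\psi^{\top}L)C, A-LC)$ implies that $A-LC$ is Hurwitz; it is Metzler since $|L|\le H$ and $A-HC$ is Metzler. From this Hurwitz matrix I will build a dual feasible point, and then check that its dual objective equals the primal objective at $(q,|\psi-q|)$. Weak duality then gives optimality.

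\textbf{Primal feasibility.} Take $\zeta=|\psi-q|$, so the constraint $-\zeta\le\psi-q\le\zeta$ holds trivially, and $q,\zeta\ge0$. By~\eqref{ARE}, $C^{\top}H^{\top}\zeta=C^{\top}\theta+A^{\top}q$, so the first constraint holds with equality. The primal objective is $\mathbb 1^{\top}q$.

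\textbf{Dual construction.} Write $L=-DH$ with $D$ diagonal, $D\in\mathrm{diag}(\mathrm{sign}(\psi-q))$. Since $A-LC=A+DHC$ is Metzler and Hurwitz, the vector $e:=-(A-LC)^{-1}\mathbb 1$ satisfies $e\ge0$. Set $\nu:=-DHCe=LCe$. Then
\begin{align*}
Ae-\nu=(A-LC)e=-\mathbb 1,
\end{align*}
and $|\nu|\le HCe$ holds because $|D|\le I$ and $HCe\ge0$. So $(e,\nu)$ is dual feasible.

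\textbf{Equality of objectives.} The dual objective is
\begin{align*}
\theta^{\top}Ce+\psi^{\top}LCe=(\theta^{\top}+\psi^{\top}L)Ce.
\end{align*}
By~\eqref{aux_thm_detec}, $(\theta^{\top}+\psi^{\top}L)C=-q^{\top}(A-LC)$, so the dual objective equals
\begin{align*}
-q^{\top}(A-LC)e=q^{\top}\mathbb 1.
\end{align*}
This is the primal objective. By weak duality (the same duality pairing used in Lemma~\ref{Lem_dual_primal}), the feasible point $(q,|\psi-q|)$ therefore maximizes~\eqref{primal_LP}.

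\textbf{Main obstacle.} The delicate step is confirming that~\eqref{dual_LP} is exactly the LP dual of~\eqref{primal_LP}, with the sign conventions on $\nu$ matching. Concretely, one must check that $\psi^{\top}\nu$ with $\nu=LCe$ corresponds to the multipliers of the constraints $\pm(\psi-q)\le\zeta$. If the signs turn out not to match, I would fall back on a direct argument. For any primal feasible $(\tilde q,\tilde\zeta)$, multiply the constraint $A^{\top}\tilde q+C^{\top}\theta-C^{\top}H^{\top}\tilde\zeta\ge0$ by $e\ge0$, and use $\tilde\zeta\ge|\psi-\tilde q|\ge D(\psi-\tilde q)$ entrywise, with $HCe\ge0$. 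Together with $(A-LC)e=-\mathbb 1$, this gives $\mathbb 1^{\top}\tilde q\le(\theta^{\top}+\psi^{\top}L)Ce=\mathbb 1^{\top}q$, the same bound without invoking the dual explicitly.
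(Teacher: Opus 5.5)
Your proof is correct, but it takes a different route from the paper. The paper uses Theorem~\ref{thm_detect_1} and Lemma~\ref{Lem_dual_primal} only to conclude that \eqref{primal_LP} is bounded. It then argues optimality of $(q,|\psi-q|)$ through the monotone value-iteration map \eqref{p_seq} of Theorem~\ref{Cont_VI}: since $q$ is a fixed point of that map, a cited monotonicity argument rules out a feasible point with a larger objective.

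You instead produce an explicit dual certificate, $e=-(A-LC)^{-1}\mathbb 1\ge 0$ and $\nu=LCe$. Using \eqref{aux_thm_detec}, you show that its objective $(\theta^{\top}+\psi^{\top}L)Ce$ equals $\mathbb 1^{\top}q$, so weak duality gives boundedness and optimality in one step.

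The sign check you flagged goes through. Put multipliers $\mu_1,\mu_2\ge 0$ on $q-\zeta\le\psi$ and $-q-\zeta\le-\psi$, and eliminate them via $\nu=\mu_1-\mu_2$; this gives exactly \eqref{dual_LP}. Your fallback computation is weak duality written out, so it is complete on its own.

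Your route has three advantages. It is self-contained, with no appeal to the value iteration or to an outside monotonicity argument. It exhibits a dual optimizer explicitly. It also generalizes at once: taking $e=-(A-LC)^{-1}c$ for any $c\ge 0$ in your fallback argument gives $c^{\top}\tilde q\le c^{\top}q$ for every feasible $(\tilde q,\tilde\zeta)$. Hence $\tilde q\le q$ entrywise, so $q$ is the entrywise largest feasible point, not merely a maximizer of $\mathbb 1^{\top}q$.

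Like the paper, you use $q\ge 0$ implicitly. This is needed both for primal feasibility and inside the proof of Theorem~\ref{thm_detect_1}, and it is the standing convention for solutions of \eqref{ARE}.
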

\begin{proof}
    By Theorem~\ref{thm_detect_1}, PZSD implies $A-LC$ is Hurwitz, so by Lemma~\ref{Lem_dual_primal} the primal LP~\eqref{primal_LP} is bounded. Taking $q$ in~\eqref{primal_LP} as the solution to~\eqref{ARE} and $\zeta=|\psi - q|$ satisfies the constraints of~\eqref{primal_LP}. If $q$ were not optimal, applying the iteration~\eqref{p_seq} of Theorem~\ref{Cont_VI} to $q$ would result in a $\tilde q$ for which $\mathbb 1^{\top}q \leq \mathbb 1^{\top}\tilde q$~\cite{yuchao}. However, since $q$  solves~\eqref{ARE} we have $\tilde q=q$ and thus $q$ is optimal.
\end{proof}

The next theorem relates boundedness of the linear program to the stabilizing properties of the stationary gain. Since~\eqref{sol_inf:L} is set-valued, boundedness guarantees that at least one stationary gain stabilizes the error dynamics, while PZSD guarantees that all of them do.
\begin{thm}\label{thm_q_LP2}
    If the primal linear program~\eqref{primal_LP} has a bounded solution, then all observer gains $L$ satisfying~\eqref{sol_inf:L} define stationary estimators associated with the same solution of~\eqref{ARE}, and at least one such $L$ stabilizes the error system, i.e. the resulting matrix $A-LC$ is Hurwitz. Moreover, if the pair $((\theta^{\top}+\psi^{\top}L)C, A-LC)$ is PZSD, then every $L$ satisfying~\eqref{sol_inf:L} stabilizes the error system.
\end{thm}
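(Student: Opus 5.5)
Three claims must be established: all gains in~\eqref{sol_inf:L} are tied to one solution of~\eqref{ARE}; at least one of them makes $A-LC$ Hurwitz; and under PZSD every one does.

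\textbf{Step 1: a common solution of~\eqref{ARE}.} Since the primal~\eqref{primal_LP} is bounded, Theorem~\ref{thm_LP1} gives an optimizer $q$ solving~\eqref{ARE}. By Remark~\ref{rem_K} and the set-valuedness remark after Theorem~\ref{thm_MEE_LR_inf}, the choice of $d_i\in\operatorname{sign}(\psi_i-q_i)$ in $L=-\mathrm{diag}(d)H$ leaves $q$ unchanged. Hence every gain in~\eqref{sol_inf:L} built from this $q$ defines a stationary estimator attached to the same solution of~\eqref{ARE}. If one wants $q$ to be the minimal solution of Theorem~\ref{thm_MEE_LR_inf}, one can run the value iteration of Theorem~\ref{Cont_VI}: since $q$ is a nonnegative solution, the iterates converge to the minimal solution $q_{\min}\le q$. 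The theorem, however, only asks for "the same solution", so I would work with the LP optimizer $q$.

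\textbf{Step 2: existence of a stabilizing gain (main obstacle).} Lemma~\ref{Lem_dual_primal} gives some $D$ with $-I\le D\le I$ and $A+DHC$ Hurwitz, but that $D$ need not be a sign selection of $\psi-q$. Instead I would use complementary slackness. Pick a dual optimizer $(e,\nu)$ of~\eqref{dual_LP}, with multipliers $\nu^+,\nu^-\ge0$ on the two sides of $-\zeta\le\psi-q\le\zeta$, so that $\nu=\nu^+-\nu^-$. The multiplier $e$ sits on $C^\top H^\top\zeta\le C^\top\theta+A^\top q$, and duality forces that constraint tight wherever $e_i>0$.

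Stationarity in $\zeta$ yields $HCe=\nu^++\nu^-$ (componentwise, up to slack on $\zeta\ge0$). Complementary slackness yields $\nu^+_i=0$ when $\psi_i-q_i<\zeta_i$, and $\nu^-_i=0$ when $-\zeta_i<\psi_i-q_i$. At the optimum $\zeta=|\psi-q|$, so for $\psi_i\ne q_i$ only the multiplier of the active side survives, and $\nu_i=\operatorname{sign}(\psi_i-q_i)(HCe)_i$. For $\psi_i=q_i$ the ratio $\nu_i/(HCe)_i$ lies in $[-1,1]$, which is again in $\operatorname{sign}(0)$.

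This defines $d_i\in\operatorname{sign}(\psi_i-q_i)$ with $\nu=\mathrm{diag}(d)HCe$, i.e.\ $\nu=-LCe$ for the gain $L=-\mathrm{diag}(d)H$. The dual constraint then becomes $(A-LC)e\le-\mathbb 1$ with $e\ge0$. Since $A-LC$ is Metzler, the same Fiedler argument as in Lemma~\ref{Lem_dual_primal} shows $A-LC$ is Hurwitz.

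The delicate points are the sign conventions in the dual and the indices where $(HCe)_i=0$; there $d_i$ can be chosen freely in the sign set. If the slackness bookkeeping fails, my fallback is Theorem~\ref{thm_detect_2}-type reasoning via~\eqref{aux_thm_detec}.

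\textbf{Step 3: every gain under PZSD.} Take any $L$ in~\eqref{sol_inf:L}. If $((\theta^\top+\psi^\top L)C,A-LC)$ is PZSD, Theorem~\ref{thm_detect_1} applies directly, using only that $q$ solves~\eqref{ARE}. It gives $A-LC$ Hurwitz, so each such $L$ stabilizes the error system~\eqref{error_inf} under $w=Lv$.
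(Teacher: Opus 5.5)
Your proposal is correct and is essentially the paper's proof. Both arguments take primal and dual optimizers and use complementary slackness with $\zeta^*=|\psi-q^*|$ to write $\nu^*$ as a sign selection of $\psi-q^*$ applied to $HCe^*$. Both then conclude Hurwitz from $(A-LC)e^*\le-\mathbb 1$ as in Lemma~\ref{Lem_dual_primal}, get the PZSD part from Theorem~\ref{thm_detect_1}, and take $q$ to be the LP optimizer rather than the minimal solution, as the paper implicitly does. That choice is necessary: for $n=m=1$, $A=0$, $C=H=\psi=\theta=1$, the minimal solution $q=0$ yields only $L=-1$ with $A-LC=1$, whereas the LP optimizer $q^*=2$ yields $L=1$.

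Do fix the signs, however. In \eqref{dual_LP}, $\nu$ is the multiplier of $-\zeta\le\psi-q$ minus that of $\psi-q\le\zeta$, so complementary slackness gives $\nu=-\mathrm{diag}(d)HCe=LCe$, and only this turns $Ae-\nu\le-\mathbb 1$ into $(A-LC)e\le-\mathbb 1$. Your $\nu=-LCe$ would give $(A+LC)e\le-\mathbb 1$, so your two sign slips merely cancel.
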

\begin{proof}
    By strong duality, the primal and dual linear program,~\eqref{primal_LP},~\eqref{dual_LP} attain the same optimal value. Let $q^*$, $\zeta^*$, $e^*$, $\nu^*$ be optimal with $\theta^{\top}C e^*+\psi^{\top}\nu^*=\mathbb 1^{\top}q^*$. By optimality, the constraints hold with equality,
    \begin{align*}
        |\psi - q^*|&=\zeta^*, \hspace{1mm} (HC)^{\top}\zeta^*=C^{\top}\theta+A^{\top} q^*, \\
         Ae^*-\nu^*&=-\mathbb 1, \hspace{1mm} |\nu^*|=HCe^*.
    \end{align*}
    Let $D=\mathrm{diag}(\mathrm{sign}(\nu^*))\in \mathbb R^{n \times n}$ such that $\nu^*=-DHCe^*$. If $(e^*)_i=0$ for some $i$, the $i$-th coordinate of $(A+DHC)e^*$ is nonnegative, contradicting $(A+DHC)e^* =-\mathbb 1 <0$. Hence $e^*>0$. Since $H\geq 0$, $HCe^* \geq \mathbb 0.$ The established equalities yield 
    \begin{align*}
        &|\psi - q^*|^{\top}HCe^*=(\zeta^*)^{\top}HC e^*=\theta^{\top}Ce^*+(q^*)^{\top}Ae^*\\
        &=\theta^{\top}C e^*-(q^*)^{\top}\mathbb 1+(q^*)^{\top} \nu^* =-\psi^{\top}\nu^*+(q^*)^{\top} \nu^* \\
        &=-(\psi -q^*)^{\top}\nu^*=(\psi -q^*)^{\top}DHC e^*
    \end{align*}
It follows that equality holds if $D\in \mathrm{diag}(\mathrm{sign}(\psi -q^*))$, i.e. $L=-DH\in -\mathrm{diag}(\mathrm{sign}(\psi -q^*))H$. The choice of $L$ is not unique if there exists an index $i$ such that $(\psi -q^*)_i=0.$ By Lemma~\ref{Lem_dual_primal} applied to the dual optimizer $(e^*, \nu^*)$ the matrix $A+DHC$ with $D=\mathrm{diag}(\mathrm{sign}(\nu^*))$ is Hurwitz. Since $D$ also satisfies the equality condition above, the corresponding $L=-DH$ satisfies~\eqref{sol_inf:L} and stabilizes the error system. 

Finally, since the primal LP~\eqref{primal_LP} is bounded, by Theorem~\ref{thm_LP1} the bounded value solves~\eqref{ARE}. Hence, for every $L$ satisfying~\eqref{sol_inf:L} such that $((\theta^{\top}+\psi^{\top}L)C, A-LC)$ is PZSD Theorem~\ref{thm_detect_1} gives that $A-LC$ is Hurwitz, i.e. every such $L$ stabilizes the error system.   
\end{proof}
\subsection{Bounded-Input Bounded-Output Stability under Disturbance Perturbations}\label{section_BIBO}
The infinite-horizon analysis of Section~\ref{subsec_inf_hor} characterizes the stationary estimator under the disturbance $\hat w=Lv$ associated with its gain, for which the error dynamics $e(t)=x(t)-\bar x(t)$ reduces to the autonomous positive system $\dot e(t)=(A-LC)e(t)$. In practice the realized process disturbance need not coincide with $\hat w$. Modeling errors, unmodeled inflows, or a mismatch between the assumed and the actual disturbance all cause the true disturbance to deviate from $\hat w$. We model such a deviation additively, writing the realized disturbance as
\begin{align}\label{subop_w}
    w(t)=\hat w(t)+\delta(t), \hspace{1mm} \hat w(t)=Lv(t)
\end{align}
where $w \in \mathcal W(v)$. Substituting~\eqref{subop_w} into the error dynamics~\eqref{error_inf}, the estimation error satisfies
\begin{align}\label{subopt_w_error}
    \dot e(t)=(A-LC)e(t)+\delta(t), \hspace{1mm} e(-T)=x(-T)-\chi.
\end{align}
The solution to the differential equation~\eqref{subopt_w_error} is 
\begin{align}\label{sol_e_subop}
    \scalebox{.94}{$e(t)=e^{(A-LC)(t+T)}e(-T)+\int_{-T}^t e^{(A-LC)(t-\tau)}\delta(\tau) d\tau.$}
\end{align}
Unlike the case $\delta=0$, the error is no longer autonomous since it is driven by the perturbation $\delta$. The following result characterizes the bounded-input bounded-output property of this system. In particular, it shows that whenever the stationary gain renders $A-LC$ Hurwitz, as guaranteed under PZSD by Theorem~\ref{thm_detect_1}, and the perturbation $\delta$ satisfies a sufficient condition for finiteness of its $L_1$-norm, the error also has finite $L_1$-norm.
\begin{thm}\label{thm_BIBO}
    Let the realized process disturbance be defined as~\eqref{subop_w} so that the estimation error is given by~\eqref{subopt_w_error}. Suppose the matrix $A-LC$ is Hurwitz, the state is bounded on $(-\infty,0]$ and  
    \begin{align}
        \int_{-\infty}^0 \mathbb 1^{\top} |\delta (\tau)|d\tau <\infty.
    \end{align}
    Then the estimation error has finite $L_1$-norm. 
\end{thm}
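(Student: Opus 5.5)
The plan is to bound the $L_1$-norm of the error directly from the variation-of-constants formula~\eqref{sol_e_subop}. I will use the positivity structure of $A-LC$ and estimate the two terms separately. Since $|L|\leq H$ and $A-HC$ is Metzler by Assumption~\ref{ass}, the matrix $M:=A-LC$ is Metzler, so $e^{Mt}\geq 0$ entrywise for $t\geq 0$. Consequently $|e(t)|\leq e^{M(t+T)}|e(-T)|+\int_{-T}^t e^{M(t-\tau)}|\delta(\tau)|\,d\tau$. Because $M$ is Metzler and Hurwitz, $-M^{-1}\geq 0$ exists (by~\cite[Thm.~5.1]{fiedler1986}), and
\begin{align*}
\int_0^\infty e^{Ms}\,ds=-M^{-1}.
\end{align*}
This identity is the only spectral fact needed.

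The homogeneous term is handled first. Integrating over $t\in[-T,0]$ gives $\int_{-T}^0 \mathbb 1^{\top}e^{M(t+T)}|e(-T)|\,dt\leq -\mathbb 1^{\top}M^{-1}|e(-T)|$. The initial error $e(-T)=x(-T)-\chi$ is uniformly bounded in $T$, since the state is bounded on $(-\infty,0]$ and $\chi$ is fixed. Hence this term is bounded independently of the horizon, exactly as in the proof of Theorem~\ref{thm_detect_1}.

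For the forced term, I integrate the convolution over $t\in[-T,0]$ and swap the order of integration by Tonelli's theorem, which applies because all integrands are nonnegative. This yields
\begin{align*}
\int_{-T}^0\!\!\int_{-T}^t \mathbb 1^{\top}e^{M(t-\tau)}|\delta(\tau)|\,d\tau\,dt=\int_{-T}^0 \mathbb 1^{\top}\Big(\int_0^{-\tau}e^{Ms}ds\Big)|\delta(\tau)|\,d\tau\leq \mathbb 1^{\top}(-M^{-1})\int_{-\infty}^0|\delta(\tau)|\,d\tau.
\end{align*}
The inequality uses $e^{Ms}\geq 0$, so the inner integral is dominated entrywise by $-M^{-1}$. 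Setting $\gamma:=\max_i(-\mathbb 1^{\top}M^{-1})_i$, the right-hand side is at most $\gamma\int_{-\infty}^0\mathbb 1^{\top}|\delta|\,d\tau<\infty$.

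Adding the two bounds gives $\int_{-T}^0\mathbb 1^{\top}|e(t)|\,dt\leq -\mathbb 1^{\top}M^{-1}|e(-T)|+\gamma\|\delta\|_{L_1}$, uniformly in $T$. Letting $T\to\infty$ by monotone convergence then gives a finite $L_1$-norm of $e$.

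The main obstacle I expect is the bookkeeping of the infinite-horizon limit. With $T\to\infty$, the initial time itself recedes, so the uniform-in-$T$ bound on $e(-T)$ is essential; this is where the boundedness of the state enters. Positivity is essential as well: it is what lets me replace $|e^{Mt}\delta|$ by $e^{Mt}|\delta|$ and reduce the whole estimate to the single matrix $-M^{-1}$, instead of needing an exponential-decay constant for a general Hurwitz matrix. If $M$ were not Metzler, I would fall back on the bound $\|e^{Mt}\|\leq \kappa e^{-\mu t}$ and Young's inequality, but under Assumption~\ref{ass} this is not needed.
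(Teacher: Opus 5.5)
Your argument is correct, but it follows a different route from the paper's proof.

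\textbf{The paper's route.} The proof of this theorem never uses positivity of $M=A-LC$. It takes the generic estimate $\|e^{Mt}\|\le\kappa e^{-\lambda t}$, valid for any Hurwitz matrix, and applies the triangle inequality in the $1$-norm. After the same change of integration order it obtains $\int_{-T}^0\mathbb 1^{\top}|e|\,dt\le\frac{\kappa}{\lambda}\big(\|e(-T)\|+\int_{-\infty}^0\mathbb 1^{\top}|\delta|\,d\tau\big)$.

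\textbf{Your route.} You use that $M$ is Metzler. You derive this correctly from $|L|\le H$, $C\ge 0$ and Assumption~\ref{ass}\eqref{cond:ass:A-HC}, since $L$ is the stationary gain~\eqref{sol_inf:L}. Then $e^{Ms}\ge 0$, and everything reduces to the nonnegative matrix $-M^{-1}=\int_0^\infty e^{Ms}\,ds$. This is the same device the paper uses in the proof of Theorem~\ref{thm_detect_1}.

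\textbf{What each buys.} Your route gives an explicit constant computable from the data, $\max_i(-\mathbb 1^{\top}M^{-1})_i$, instead of the unspecified $\kappa/\lambda$. This constant is exactly the $L_1$-induced gain from $\delta$ to $e$ of the positive error system, so your bound is tight, which fits the linear-programming spirit of the paper. The paper's route buys generality. The statement only assumes $A-LC$ Hurwitz; Metzler is added as a hypothesis only in the converse result that follows. So the paper's argument needs neither an admissible gain nor Assumption~\ref{ass}. Your closing fallback to $\|e^{Mt}\|\le\kappa e^{-\mu t}$ is precisely the paper's proof.

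\textbf{A minor point.} The trajectory $e$ depends on $T$ through $e(-T)=x(-T)-\chi$. The integrals $\int_{-T}^0\mathbb 1^{\top}|e|\,dt$ for different $T$ are therefore not integrals of a single function, so ``monotone convergence'' is not the right justification. The conclusion is simply your uniform-in-$T$ bound, which is also what the paper establishes. You make the role of the bounded state in controlling $e(-T)$ more explicit than the paper does.
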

\begin{proof}
    Let $M=A-LC$ and $\big\| \cdot \big\|$ be the 1-norm. Since $M$ is Hurwitz, there exist $\kappa, \lambda >0$ such that $\big\| e^{Mt}\big\|\leq \kappa e^{-\lambda t}$ for all $t\geq 0$~\cite[Fact 11.18.8]{bernstein}. Hence, from~\eqref{sol_e_subop} 
    \begin{align*}
    \|e(t)\|&\leq \|e^{M(t+T)}\| \|e(-T)\|+\int_{-T}^t \| e^{M(t-\tau)}\| \|\delta (\tau)\| d\tau \\ 
    &\leq \kappa e^{-\lambda (t+T)}\|e(-T)\|+\int_{-T}^t \kappa e^{-\lambda(t-\tau)}\|\delta (\tau)\|d \tau.
    \end{align*}
    Integrating over $t\in [-T, 0]$, the first term gives $\kappa \|e(-T)\|\int_{-T}^{0} e^{-\lambda (t+T)}dt=\frac{\kappa}{\lambda}(1-e^{-\lambda T})\|e(-T)\|\leq\frac{\kappa}{\lambda}\|e(-T)\|$. Let $u=t-\tau$, the second term gives
    \begin{align*}
    &\int_{-T}^{0} \int_{-T}^t \kappa e^{-\lambda (t-\tau)}\|\delta(\tau)\| d\tau\, dt\\ &~~~~=\int_{-T}^{0}\|\delta(\tau)\|\Big( \int_{\tau}^{0}\kappa e^{-\lambda(t-\tau)}dt\Big)d\tau\\ &~~~~=\int_{-T}^{0}\|\delta(\tau)\| \Big( \int_0^{-\tau}\kappa e^{-\lambda u} du\Big) d\tau\\ 
    &~~~~\leq \frac{\kappa}{\lambda} \int_{-T}^{0}\|\delta(\tau)\|d\tau.
    \end{align*}
    
    Therefore
    \begin{align*}
        \int_{-T}^{0}\mathbb 1^{\top}|e(t)|dt \leq \frac{\kappa}{\lambda}\Big(\|e(-T)\|+\int_{-\infty}^{0}\mathbb 1^{\top}|\delta(\tau)|d \tau \Big)<\infty
    \end{align*}
    so $e$ has finite $L_1$-norm. 
\end{proof}

The condition $\int_{-\infty}^{0} \|\delta(\tau) \|d\tau <\infty$ is not merely a sufficient condition. The following result shows that for a constant perturbation $\delta(t)= \epsilon$ with infinite $L_1$-norm, the error has infinite $L_1$-norm, even though $A-LC$ is Hurwitz. 

\begin{thm}
    Let $A-LC$ in~\eqref{sol_e_subop} be Metzler and Hurwitz and $e(-T)=e_0 \geq 0$ for all $T$. There exist signals $\delta(t)$ for which 
    \begin{align}
        \int_{-\infty}^{0} \mathbb 1^{\top} |\delta (\tau)|\,d\tau = \infty
    \end{align}
    that lead to an unbounded $L_1$-norm of the error~\eqref{sol_e_subop}. 
\end{thm}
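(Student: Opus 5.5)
We will prove the statement by an explicit construction: a constant nonnegative perturbation $\delta(t)=\epsilon$, with $\epsilon\geq 0$ and $\epsilon\neq 0$, applied on the whole window $[-T,0]$. We then show that the $L_1$-norm of the error over $[-T,0]$ grows without bound as $T\to\infty$. Write $M:=A-LC$, which is Metzler and Hurwitz by hypothesis. Then $e^{Mt}\geq 0$ for all $t\geq 0$, and $-M^{-1}\geq 0$; the latter is the standard characterization of Metzler Hurwitz matrices in~\cite[Thm.~5.1]{fiedler1986}. The perturbation clearly has $\int_{-\infty}^0\mathbb 1^{\top}|\delta|\,d\tau=\infty$.

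Substituting $\delta=\epsilon$ into~\eqref{sol_e_subop} and evaluating the convolution gives
\begin{align*}
e(t)=e^{M(t+T)}e_0+\big(I-e^{M(t+T)}\big)(-M^{-1})\epsilon .
\end{align*}
Both terms are nonnegative: the first because $e^{M(t+T)}\geq 0$ and $e_0\geq 0$, the second because it equals $\int_0^{t+T}e^{Ms}\epsilon\,ds\geq 0$. Hence $|e(t)|=e(t)$. We may therefore drop the first term and keep the lower bound
\begin{align*}
\mathbb 1^{\top}e(t)\geq \mathbb 1^{\top}\int_0^{t+T}e^{Ms}\epsilon\,ds .
\end{align*}

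Next we integrate over $t\in[-T,0]$. Since $e^{Ms}\to 0$, we have $(I-e^{M(t+T)})(-M^{-1})\epsilon\to e_\infty:=-M^{-1}\epsilon$ as $t+T\to\infty$. The vector $e_\infty$ is nonnegative and nonzero, since $M^{-1}$ is invertible and $\epsilon\neq0$. So there is $\tau_0$, independent of $T$, such that $\mathbb 1^{\top}e(t)\geq \tfrac12\mathbb 1^{\top}e_\infty>0$ whenever $t+T\geq\tau_0$. This yields
\begin{align*}
\int_{-T}^0\mathbb 1^{\top}|e(t)|\,dt\geq \tfrac12(T-\tau_0)\,\mathbb 1^{\top}e_\infty\to\infty .
\end{align*}
Alternatively, the exact computation gives $\int_{-T}^0 e\,dt = T e_\infty - M^{-1}(I-e^{MT})(e_0-e_\infty)$, where the second term is bounded in $T$. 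Either way the error has unbounded $L_1$-norm, even though $A-LC$ is Hurwitz.

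The only delicate point is ensuring that $e_\infty$ is genuinely nonzero and nonnegative, so that the linear growth does not vanish. This follows from invertibility and nonnegativity of $-M^{-1}$ together with $\epsilon\geq 0$, $\epsilon\neq0$. If admissibility $w=Lv+\epsilon\in\mathcal W(v)$ also needs to be respected, we would choose $\epsilon$ small relative to the slack $Hv-|Lv|$. This is possible when the state stays bounded away from zero; the statement, however, only asks for existence of such signals $\delta$ in the error equation~\eqref{sol_e_subop}.
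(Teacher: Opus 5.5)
Your proof is correct. It uses the same construction as the paper: a constant perturbation $\delta=\epsilon$ and the closed form $e(t)=-M^{-1}\epsilon+e^{M(t+T)}(e_0+M^{-1}\epsilon)$. You obtain the lower bound differently, and your version covers more of the theorem's hypotheses.

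\textbf{The paper's route.} The paper assumes $\epsilon>0$ together with $e_0+M^{-1}\epsilon\geq 0$. This makes the transient term nonnegative, so $e(t)\geq -M^{-1}\epsilon$ for every $t$, and it gives the clean bound $\int_{-T}^0\mathbb 1^{\top}|e|\,dt\geq T\,\mathbb 1^{\top}(-M^{-1}\epsilon)$. However, $-M^{-1}\geq 0$ is invertible, so every row has a positive entry and $-M^{-1}\epsilon>0$ entrywise. The compatibility condition therefore forces $e_0>0$ entrywise. It cannot be met when $e_0$ has a zero entry (e.g.\ $e_0=0$), even though the statement allows $e_0\geq 0$.

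\textbf{Your route.} You split $e(t)$ into the free response $e^{M(t+T)}e_0\geq 0$ and the forced response $\int_0^{t+T}e^{Ms}\epsilon\,ds\geq 0$. This split needs no relation between $\epsilon$ and $e_0$. The only cost is that the forced response reaches $\tfrac12 e_\infty$ after a $T$-independent time $\tau_0$, giving $\tfrac12(T-\tau_0)\mathbb 1^{\top}e_\infty$. Your exact formula gives the same thing as $T\,\mathbb 1^{\top}e_\infty+O(1)$. So your argument works for every $e_0\geq 0$ and every nonzero $\epsilon\geq 0$.

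\textbf{A caveat on your admissibility aside.} For a stationary gain $L=-DH$ with diagonal entries $d_i=\pm1$, the slack $Hv-|Lv|$ is identically zero. In any row where $L$ coincides with $H$, the constraint $|w|\leq Hv$ admits only $\epsilon_i\in[-2(Hv)_i,0]$. A positive constant perturbation is therefore inadmissible there no matter how far the state stays from zero. Neither the statement nor the paper's proof imposes $w\in\mathcal W(v)$, so this does not affect your argument, but the remark as written is inaccurate.
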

\begin{proof}
    Let $M := A-LC$. Assume $\delta(t)=\epsilon$ is a sufficiently small constant signal such that $\epsilon>0$ and $e_0+M^{-1}\epsilon \geq 0$. Then, the error~\eqref{sol_e_subop} becomes    \begin{align}\label{aux_BIBO}
        e(t)&=e^{M(t+T)}e_0+\int_{-T}^t e^{M(t-\tau)}d\tau \hspace{0.5mm}\epsilon\\ 
        &=-M^{-1}\epsilon + e^{M(t+T)}[e_0+M^{-1}\epsilon]
    \end{align}
    By assumption $e_0\geq 0$. Since $M$ is Metzler and Hurwitz, $-M^{-1}$ exists and has nonnegative entries, $e^{M(t+T)} \geq 0$ and $e_0+M^{-1}\epsilon \geq 0$ by assumption, so $e^{M(t+T)}[e_0+M^{-1}\epsilon] \geq 0$ and
    \begin{align*}
        -M^{-1}\epsilon + e^{M(t+T)}[e_0+M^{-1}\epsilon]\geq -M^{-1}\epsilon >0.
    \end{align*}
    Integrating~\eqref{aux_BIBO}
    \begin{align*}
        \int_{-T}^{0} \mathbb 1^{\top}|e(t)|dt&=\int_{-T}^{0} \mathbb 1^{\top} \big(-M^{-1}\epsilon+e^{M(t+T)}[e_0+M^{-1}\epsilon] \big)dt\\ &\geq \int_{-T}^{0} \mathbb 1^{\top} \big(-M^{-1}\epsilon\big) dt.
    \end{align*}
    Since $-M^{-1}\epsilon >0$, then $\lim_{T \rightarrow \infty}\int_{-T}^0 \mathbb 1^{\top}\big(-M^{-1}\epsilon \big) dt = \infty$, which completes the proof.
\end{proof}
\begin{exmp}
\begin{figure}[h]
     \centering
     \includegraphics[scale=0.41]{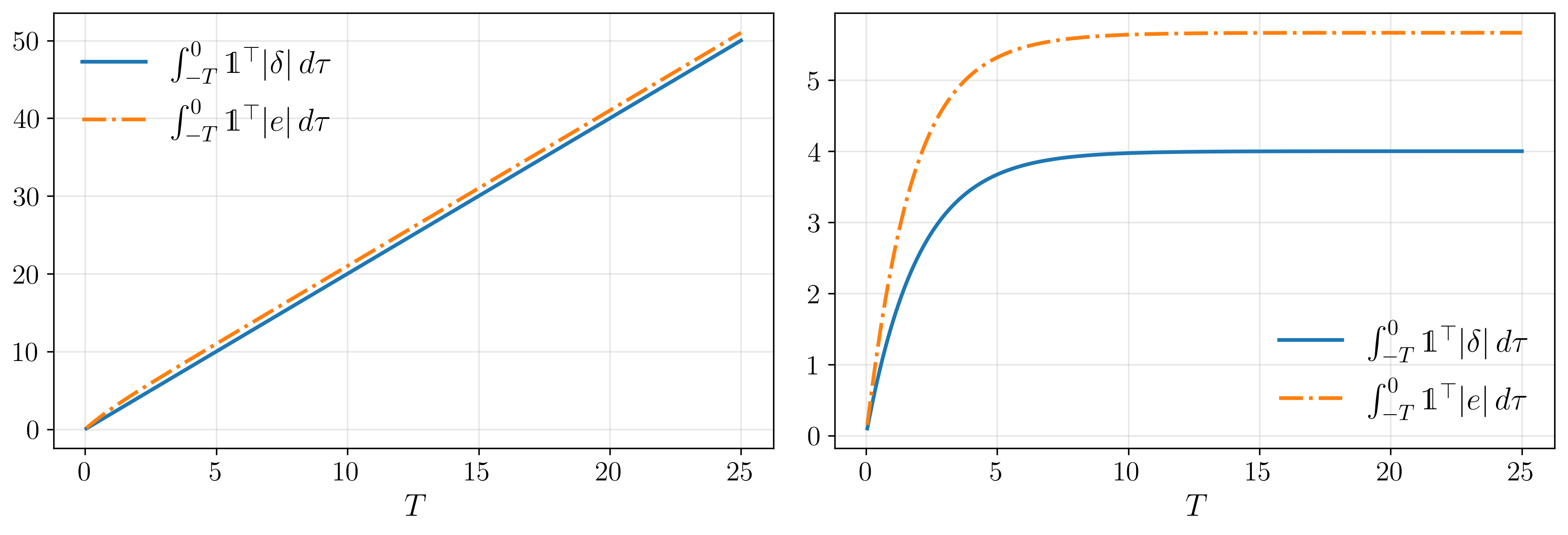}
        \caption{Integrals $\int_{-T}^0\mathbb 1^\top|\delta|\,d\tau$ (solid) and $\int_{-T}^0\mathbb 1^\top|e|\,d\tau$ (dash-dotted) for the two-state example as functions of $T$. Left: constant perturbation $\delta=\epsilon$, both integrals diverge. Right: decaying perturbation $\delta=\epsilon e^{\mu t}$, both integrals saturate, at $4$ and $17/3$ respectively.}
        \label{example4}
    \end{figure}
    Let 
    \begin{align*}
        A=\begin{bmatrix}
            1&1\\
            1&1
        \end{bmatrix}, \hspace{1mm} C=I_2, \hspace{1mm} H= 3 I_2.
    \end{align*}
    The open loop is unstable, since $A$ is Metzler with $\alpha (A)=2 >0$. Choosing $L=3I_2$ (admissible since $|L|\leq H$), $M=A-LC$ becomes 
    \begin{align*}
        M=\begin{bmatrix}
            -2&1\\
            1&-2
        \end{bmatrix}
    \end{align*}
    which is Metzler and Hurwitz with eigenvalues $-3$ and $-1$. Moreover, the inverse is 
    $$-M^{-1}=\tfrac13\begin{bmatrix}2&1\\1&2\end{bmatrix}\ge0.$$
    We compare two admissible perturbations $\delta$, taking the initial error $e(-T)=\tfrac{3}{2} \mathbb 1$ so that $e(-T)+M^{-1}\epsilon=\tfrac{1}{2}\mathbb 1 \geq 0$. Since $\mathbb 1$ is an eigenvector of $M$ with $M \mathbb 1=-\mathbb 1$, we have $e^{Mt}\mathbb 1=e^{-t} \mathbb 1$.
    \begin{itemize}
        \item Constant perturbation $\delta(t)=\epsilon$ with $\epsilon= \mathbb 1$. Here $\int_{-\infty}^0\mathbb 1^{\top}|\delta|d \tau=\int_{-\infty}^0 2 d\tau=\infty.$ By~\eqref{aux_BIBO},
        \begin{align*}
            e(t)&=-M^{-1}\epsilon+e^{M(t+T)}(e_0+M^{-1}\epsilon)\\ &=\mathbb 1+\tfrac12 e^{-(t+T)}\mathbb 1=\big(1+\tfrac12 e^{-(t+T)}\big)\mathbb 1\ge0,
        \end{align*}
        so 
        \begin{align*}
            \mathbb 1^{\top} |e(t)|=\big( 1+\tfrac{1}{2}e^{-(t+T)}\big)\mathbb 1^{\top}\mathbb 1=2+e^{-(t+T)}
        \end{align*}
        and
        \begin{align*}
            &\lim_{T\rightarrow \infty}\int_{-T}^0 \mathbb 1^\top|e(\tau)|\,d\tau\\
            &~~~~~~~~~~~~~=\lim_{T\rightarrow \infty} \int_{-T}^0 \big(2+e^{-(\tau+T)}\big)d\tau=\infty.
        \end{align*}
        \item Decaying perturbation $\delta(t)=\epsilon e^{\mu t}$, with $\epsilon=\mathbb 1$, $\mu=\tfrac{1}{2}$. Here $$\int_{-\infty}^{0}\mathbb 1^{\top}|\delta|d\tau=\int_{-\infty}^{0}2e^{\mu \tau}d\tau =\tfrac{2}{\mu}=4.$$ Since $e(-T)$, $\epsilon$ and $M\mathbb 1=-\mathbb 1$ are all proportional to $\mathbb 1$, the error stays proportional to $\mathbb 1$, writing $e(t)=a(t)\mathbb 1$ and substituting $\dot e=Me+\delta$ gives the equation
        \begin{align}\label{a.2}
            \dot a \mathbb 1=aM \mathbb 1+ e^{\mu t}\mathbb 1=(-a+e^{\mu t})\mathbb 1
        \end{align}
        where $a(-T)=\tfrac{3}{2}$ from $e(-T)=\frac{3}{2}\mathbb 1$. Solving $\dot a(t)=-a(t)+e^{\mu t}$ gives
        \begin{align}\label{a}
            a(t)=\tfrac32 e^{-(t+T)}+\frac{e^{\mu t}-e^{-t-(1+\mu)T}}{1+\mu}\ge 0.
        \end{align}
        Hence, $\mathbb 1^{\top}|e(t)|=2a(t)$, and since 
        \begin{align*}
            \lim_{T\rightarrow \infty}\int_{-T}^{0} \tfrac{3}{2}e^{-(\tau+T)}d\tau=\tfrac32,\\ \lim_{T \rightarrow \infty}\tfrac{1}{1+\mu}\int_{-T}^{0} \big(e^{\mu \tau}-e^{-\tau-(1+\mu)T} \big)d\tau&=\tfrac{1}{\mu(1+\mu)}
        \end{align*}
        so
        \begin{align*}
            \lim_{T\rightarrow \infty} \int_{-T}^0\mathbb 1^\top|e(\tau)|\,d\tau=\lim_{T\rightarrow \infty} 2\int_{-T}^0 a(\tau)\,d\tau=\tfrac{17}{3}.
        \end{align*}
    \end{itemize}
    Figure~\ref{example4} shows the integrals $\int_{-T}^0 \mathbb 1^{\top}|\delta|d\tau$ and $\int_{-T}^0 \mathbb 1^{\top}|e|d\tau$ for both perturbations as functions of $T$. For the constant perturbation (left), $\int_{-T}^0 \mathbb 1^{\top}|\delta|d\tau=2T$ grows without bound and the error integral grows with it, confirming that the error integral diverges despite $M$ being Hurwitz. For the decaying perturbation (right), $\int_{-T}^{0} \mathbb 1^{\top}|\delta| d\tau$ saturates at $\tfrac{2}{\mu}=4$ and the error integral saturates at $17/3$, which matches the closed-form values above and illustrates Theorem~\ref{thm_BIBO}.
\end{exmp}
In the next example we consider an irrigation network of $n$ contiguous sections, each holding a water volume $x_i(t)\geq 0$, connected to its neighbors and monitored by level sensors placed every second section. Only half the sections are instrumented, and the readings over-report in proportion to the true volume. We show that the $L_1$-MEE estimator reconstructs the volumes of all sections, instrumented or not, with error decaying to zero, and that the computation scales to networks of hundreds of sections.
\section{Example: Irrigation network}\label{sect_exmpl_scal}
\begin{figure*}[!ht]
\centering
\begin{tikzpicture}[scale=0.95, transform shape,
  reservoir/.style={draw, thick, minimum width=0.8cm, minimum height=0.6cm,
                    fill=blue!10, rounded corners=1pt},
  gauge/.style={draw, thick, circle, minimum size=0.35cm, fill=orange!30,
                inner sep=0pt, font=\scriptsize},
  arrow/.style={-{Stealth[length=2mm]}, thick},
  lossarrow/.style={-{Stealth[length=2mm]}, thick, red!60!black},
  futurearrow/.style={-{Stealth[length=2mm]}, thick, gray!50, dashed},
  every node/.style={font=\small}
]
\foreach \i in {1,...,7} {
  \node[reservoir] (r\i) at (\i*1.4, 0) {$x_{\i}$};
}
\node at (8*1.4, 0) {$\cdots$};
\node[reservoir] (rn) at (9*1.4, 0) {$x_n$};
\foreach \i/\j in {1/2, 2/3, 3/4, 4/5, 5/6, 6/7} {
  \draw[arrow] (r\i.east) -- node[above, font=\scriptsize] {$\beta$} (r\j.west);
  \draw[arrow] (r\j.west) to[bend left=30] (r\i.east);
}
\draw[arrow] (r7.east) -- (8*1.4 - 0.2, 0);
\draw[arrow] (8*1.4 + 0.2, 0) -- (rn.west);
\foreach \i/\name in {1/$w_1$, 2/$w_2$, 3/$w_3$, 4/$w_4$, 5/$w_5$, 6/$w_6$, 7/$w_7$, 9/$w_n$} {
  \draw[arrow] (\i*1.4, 1.1) node[above] {\name} -- (\i*1.4, 0.32);
}
\foreach \i in {1,2,3,4,5,6,7,9} {
  \draw[lossarrow] (\i*1.4 + 0.25, -0.32) -- ++(0.35, -0.35)
    node[right, font=\scriptsize, red!60!black] {$\gamma$};
}
\foreach \i/\j/\rname in {1/1/r1, 3/2/r3, 5/3/r5, 7/4/r7, 9/m/rn} {
  \node[gauge] (g\j) at (\i*1.4, -1.3) {\j};
  \draw[arrow] (\rname.south) -- ++(0,-0.4) -- (g\j.north)
    node[midway, right, font=\scriptsize] {$y_\j$};
  \node[font=\scriptsize, left=1pt of g\j] {$v_\j$};
}
\end{tikzpicture}
\caption{Irrigation network with $n$ sections. Volumes $x_i$ exchange water bidirectionally with their neighbors at rate $\beta$ and dissipate locally at rate $\gamma$ (red arrows). Disturbances $w_i$ act on every section; gauges placed every $k=2$ sections report measurements $y_j$ corrupted by state-dependent noise $v_j$.}
\label{fig:irrigation_corridor}
\end{figure*}
An irrigation network distributes water through a system of channels that supply crops across a cultivated area, with the flow regulated by gates and monitored by level sensors. Keeping the water at appropriate levels along the channels requires knowing how much water each stretch actually holds, but sensors are costly, so only a few points are instrumented and their readings are imperfect. The network is line-shaped and subdivided into $n$ contiguous sections, indexed from $i=1$ to $i=n$, each holding a volume $x_i(t)\geq 0$ of water. The dynamics are governed by three effects. First, each section loses water locally at a rate $\gamma >0$, representing evaporation, infiltration, or leakage through small fissures, which drains section $i$ in proportion to the volume it holds. Second, adjacent sections are connected by open gates, so water is exchanged bidirectionally at rate $\beta >0$ between neighbors, modeling the diffusive mixing through the connecting gates. Third, process disturbances $w_i$ capture unmodeled inflows from pump leakages. A small subset of sections is equipped with gauges, which report the local volume corrupted by state-dependent (over-reading) noise $v_j \in [0, (Cx)_j]$. In this setting sensor inaccuracy scales with the true volume, so a section holding more water is measured less accurately in absolute terms. The task is to reconstruct the volumes in all $n$ sections, including the unmeasured ones, from these few noisy readings. A control input $u$, whose joint design with the estimator is a natural extension paralleling the framework of~\cite{TAC_paper}, is left as future work. The system dynamics corresponding to Figure~\ref{fig:irrigation_corridor} are modeled as
\begin{align}\label{state-space}
    \dot x_1&=-\gamma x_1+\beta x_2+w_1\\
    \dot x_i &=-\gamma x_i + \beta (x_{i-1}+x_{i+1}) +w_i, \hspace{1mm} i=2,\dots, n-1 \notag\\  
    \dot x_n&=-\gamma x_n+\beta x_{n-1}+w_n \notag
\end{align}
Stacking the section volumes into $x \in \mathbb R^n_+$, the sensor readings into $y\in \mathbb R^m_+$, the disturbances into $w \in \mathbb R^n$ and $v \in \mathbb R^m$, the network can be represented as the LTI positive system
\begin{align*}
    \dot x(t)=Ax(t)+w(t), \hspace{2mm} y(t)=Cx(t)+v(t)
\end{align*}
matching the $L_1$-MEE setup of equation~\eqref{cont_dyn} with $u\equiv 0$. The disturbances satisfy $0\leq v \leq Cx$ and $|w|\leq Hv$ as in~\eqref{admissible_w},~\eqref{admissible_v}. Assume the network is homogeneous and level sensors are placed every $k=2$ sections. 
Then the system matrices $A\in \mathbb R^{n \times n}$, $C \in \mathbb R^{m \times n}$ are
\begin{align*}
    \scalebox{.85}{$A= \begin{bmatrix}
        -\gamma& \beta &0 & \cdots & 0\\
        \beta & -\gamma & \beta & \cdots & 0\\
        0& \beta & -\gamma & \ddots & \vdots\\
        \vdots & \vdots& \ddots & \ddots & \beta\\
        0&0&\cdots & \beta & -\gamma
    \end{bmatrix}, \hspace{2mm} C=\begin{bmatrix}
        1&0&0&0&0&\cdots & 0\\
        0&0&1&0&0&\cdots & 0\\
        0&0&0&0&1&\cdots & 0\\
        \vdots & &&&&\ddots&\vdots\\
        0&0&0&0&0&\cdots & 1
    \end{bmatrix}$}
\end{align*}
The matrix $A$ is a tridiagonal Toeplitz matrix, hence its eigenvalues are $-\gamma +2\beta \cos (k\pi / (n+1))$, $k=1\dots n$ and its spectral abscissa is $\alpha(A)=-\gamma+2\beta \cos(\pi / (n+1))$, positive whenever $\gamma < 2\beta \cos(\pi/ (n+1))$. The matrix $H\in \mathbb R^{n\times m}_+$ specifies how much process disturbance each section is allowed, and ties this to the sensor that monitors it. Sections are grouped in consecutive blocks of $k$, one block per sensor, so section $i$ is monitored by sensor $\lceil i/k \rceil$ and receives a disturbance budget $\eta >0$ from it
\begin{align*}
    H_{i,j}=\begin{cases}
        \eta& j = \lceil i/k \rceil\\
        0& \mathrm{otherwise}.
    \end{cases}
\end{align*}
Each row of $H$  has exactly one nonzero entry, in the column of the sensor monitoring that section, and each column has $k$ nonzero entries, one per section in its block. For $k=2$ this means sections 1,2 draw on the first sensor, sections 3, 4 on the second and so on. In particular no column of $H$ vanishes, so Assumption~\ref{ass}\eqref{cond:ass:E} holds. The parameter $\eta$ sets how much unmodeled inflow is admissible per unit of sensor noise, a larger $\eta$ permits larger process disturbances for the same measurement error.  
The cost weights are uniform, $\psi = \rho_{\psi} \mathbb 1_n$ and $\theta = \rho_{\theta} \mathbb 1_m$, where $\rho_{\psi}, \rho_{\theta} >0$ are tunable.  

The choice of $\gamma, \hspace{1mm}\beta, \hspace{1mm}\eta,\hspace{1mm} \rho_{\psi},\hspace{1mm} \rho_{\theta}$ depends on conditions~\eqref{cond:ass:A-HC},~\eqref{cond:ass:r-E.Ts}. Fix $\gamma=0.8, \hspace{1mm}\beta = 0.5,\hspace{1mm} \eta=0.5,\hspace{1mm} \rho_{\psi}=1,\hspace{1mm} \rho_{\theta}=2$ and assume that the network has $n=10$ sections. Under these parameters the Metzler condition holds and the spectral abscissa of $A$ is $\alpha(A)\approx 0.16$, so the open-loop dynamics are unstable. For $n=10$, $C$ has $m=5$ rows and the matrix $H\in \mathbb R^{10 \times 5}$ takes the form
\begin{align*}
   \scalebox{.9}{$ H^{\top}=\begin{bmatrix}
        \eta&\eta&0&0&0&0&0&0&0&0\\
        0&0&\eta&\eta&0&0&0&0&0&0\\
        0&0&0&0&\eta&\eta&0&0&0&0\\
        0&0&0&0&0&0&\eta&\eta&0&0\\
        0&0&0&0&0&0&0&0&\eta&\eta\\
    \end{bmatrix}$}
\end{align*}
\subsubsection{Finite Horizon}
Figure~\ref{fin-hor-1} reports the finite-horizon solution on $[-T,0]$ with
$T=15$, plotted for  $t=\tau+T\in[0,T]$. $q(t)$ decreases monotonically over the horizon, from its stationary value at the initial time to $0$ at the terminal time, consistent with Lemma~\ref{lem_monotone}. The entries of $L(t)$ are piecewise constant, switching only when a component of $\psi - q(t)$ changes sign, and each nonzero entry takes values in $\{-\eta,\eta\}$ as indicated by \eqref{eq:sol:L_t}. The estimator is initialized at an initial estimate $\chi$ drawn componentwise uniformly on $[-2, 2]$, below the true initial volumes $x(-T)$, so that the initial error $e(-T)=x(-T)-\chi \geq 0$ as required by~\eqref{cond:ass:x(-T)}. The true volumes $x_i$ (solid) and estimates $\bar x_i$ (dashed) agree after an initial
transient, and the error $e=x-\bar x$ stays nonnegative and decays, confirming
the positive error dynamics of Remark~\ref{rem-error}.
\begin{figure}[h]
        \centering
        \includegraphics[scale=0.35]{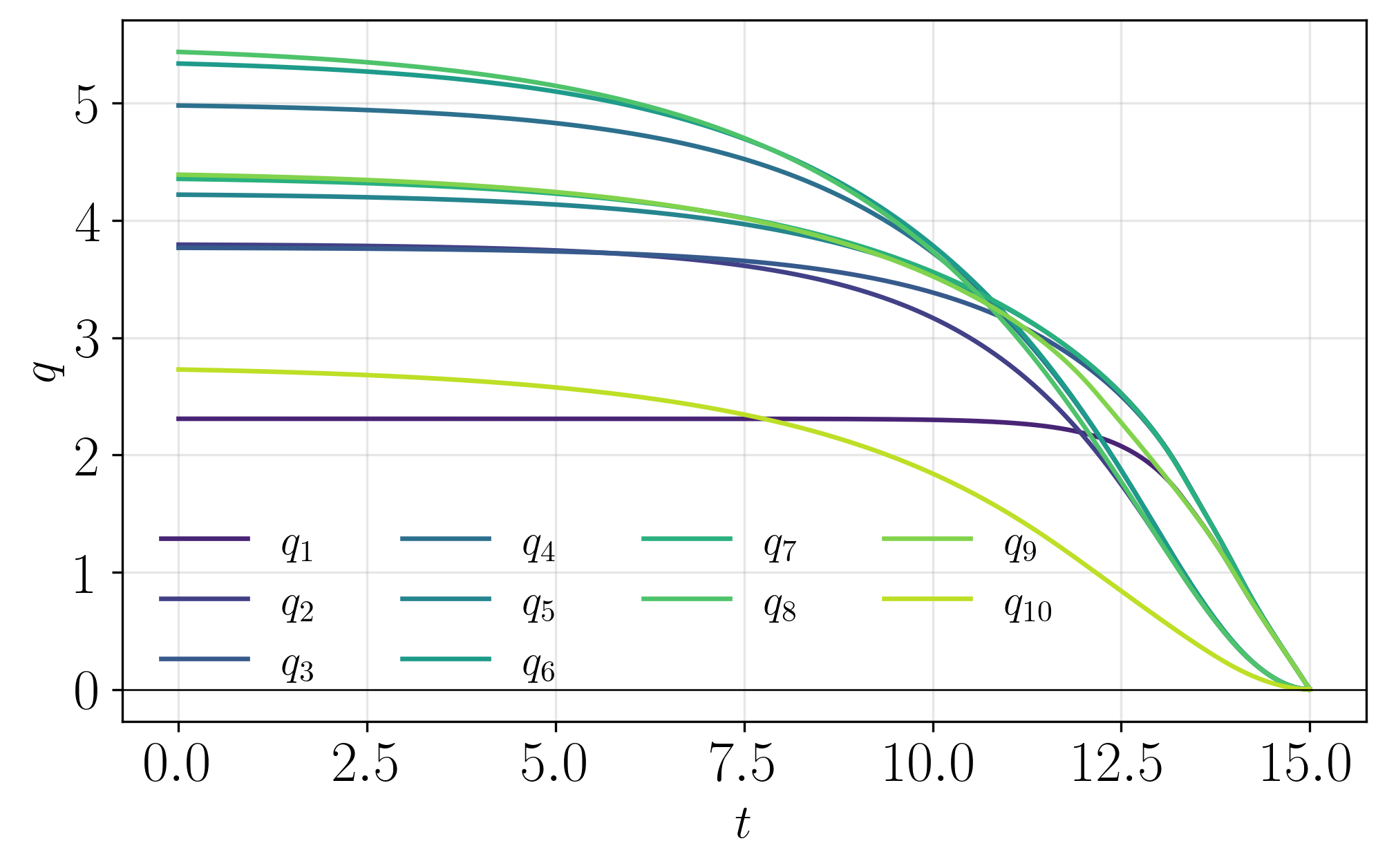}
        \includegraphics[scale=0.35]{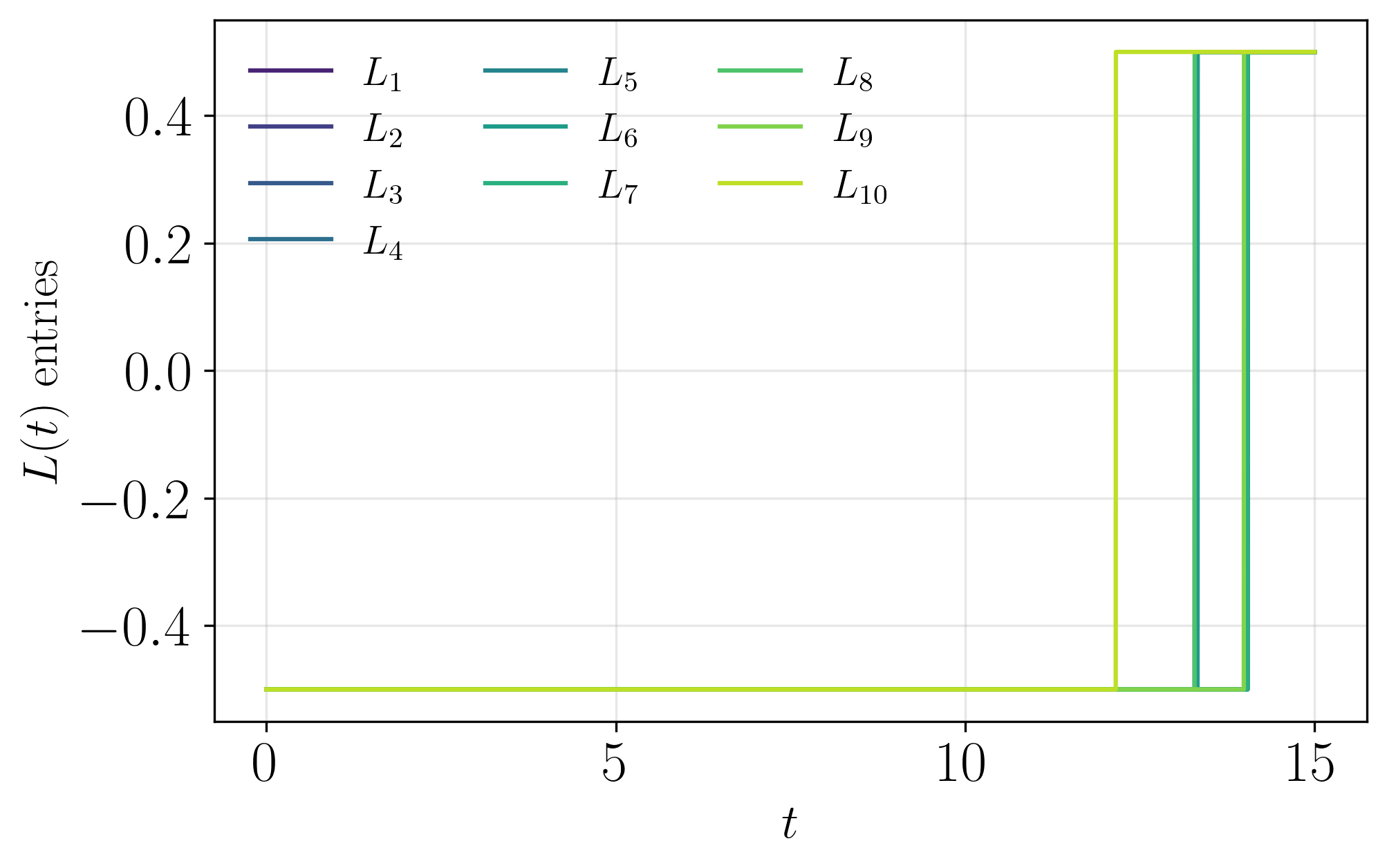}
        \includegraphics[scale=0.35]{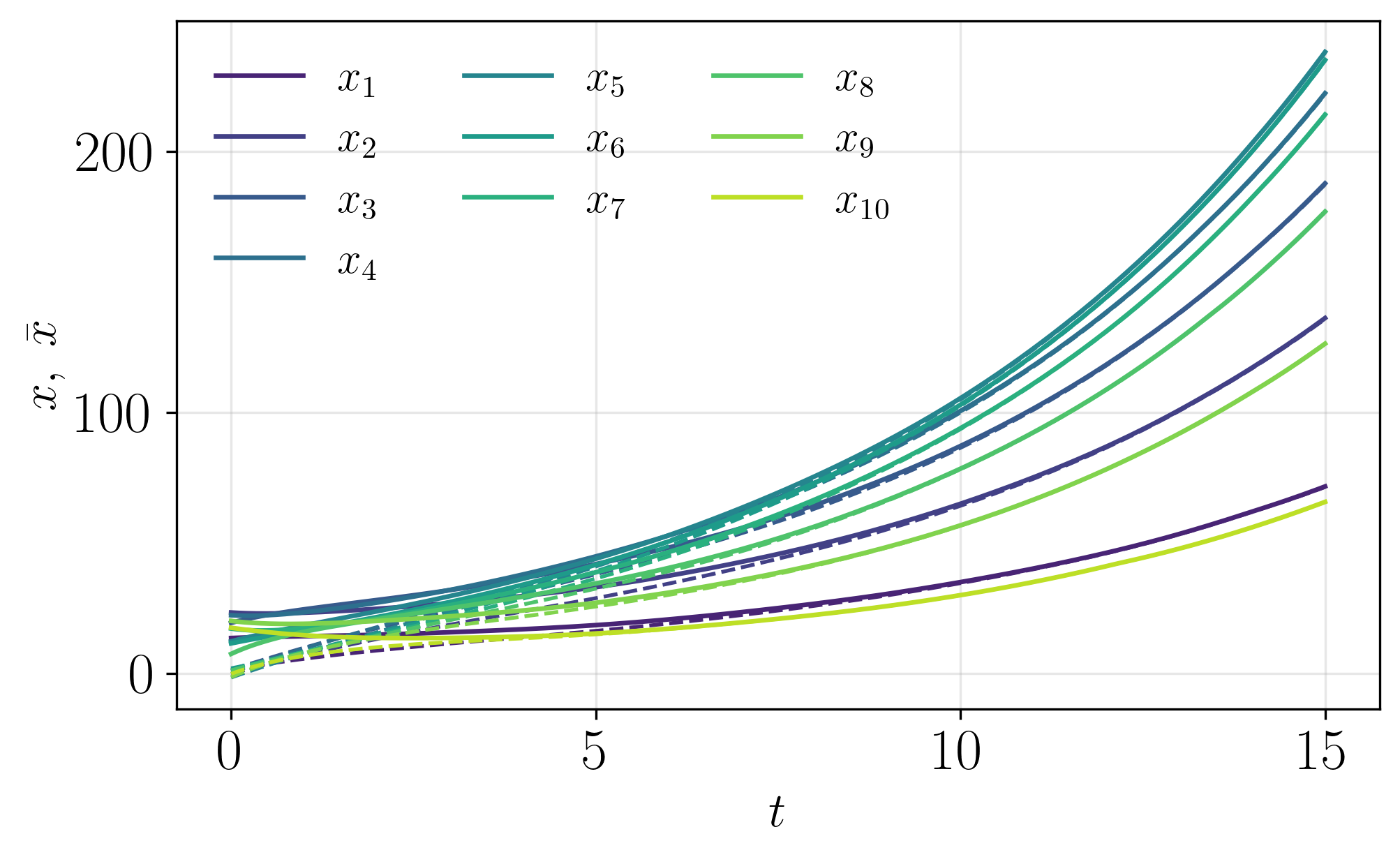}
        \includegraphics[scale=0.35]{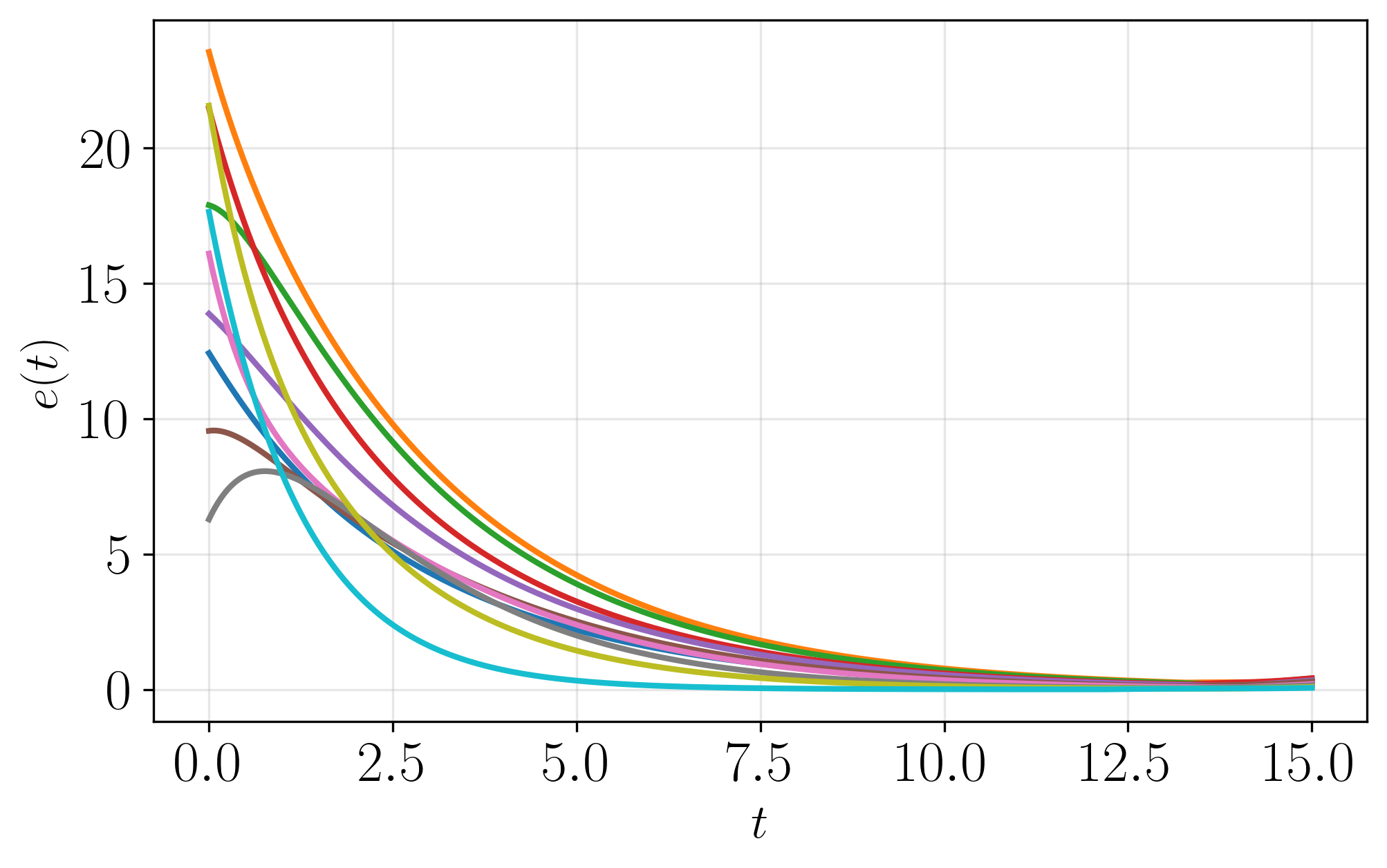}
        \caption{Finite Horizon: (Upper left) $q(t)$ over time. (Upper right) $[L(t)]_i$ over time. (Lower left) state trajectories (solid) vs estimator trajectories (dashed) over time. (Lower right) error system trajectories over time.}
        \label{fin-hor-1}
    \end{figure}
\begin{figure}[h]
        \centering
        \includegraphics[scale=0.35]{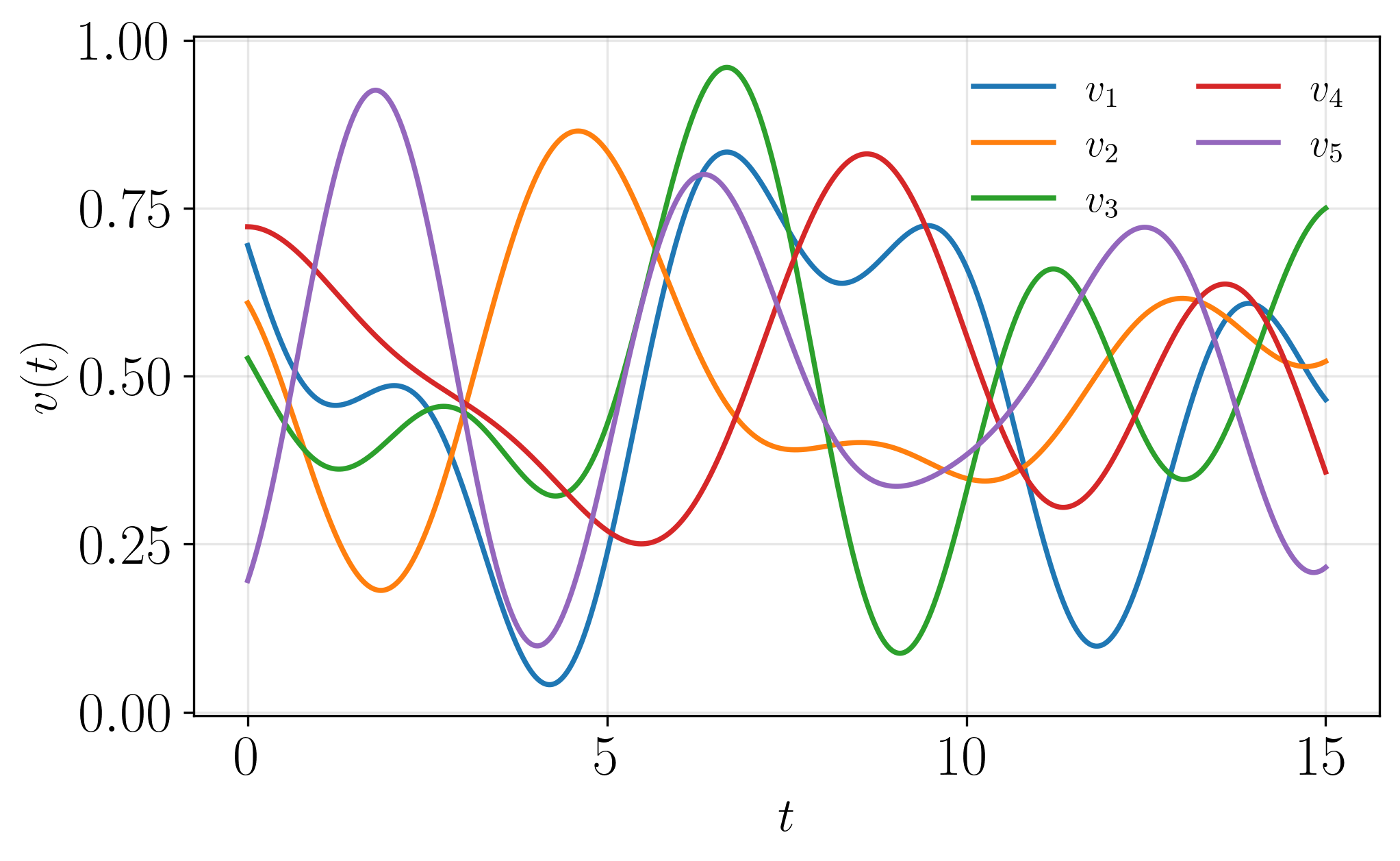}
        \includegraphics[scale=0.35]{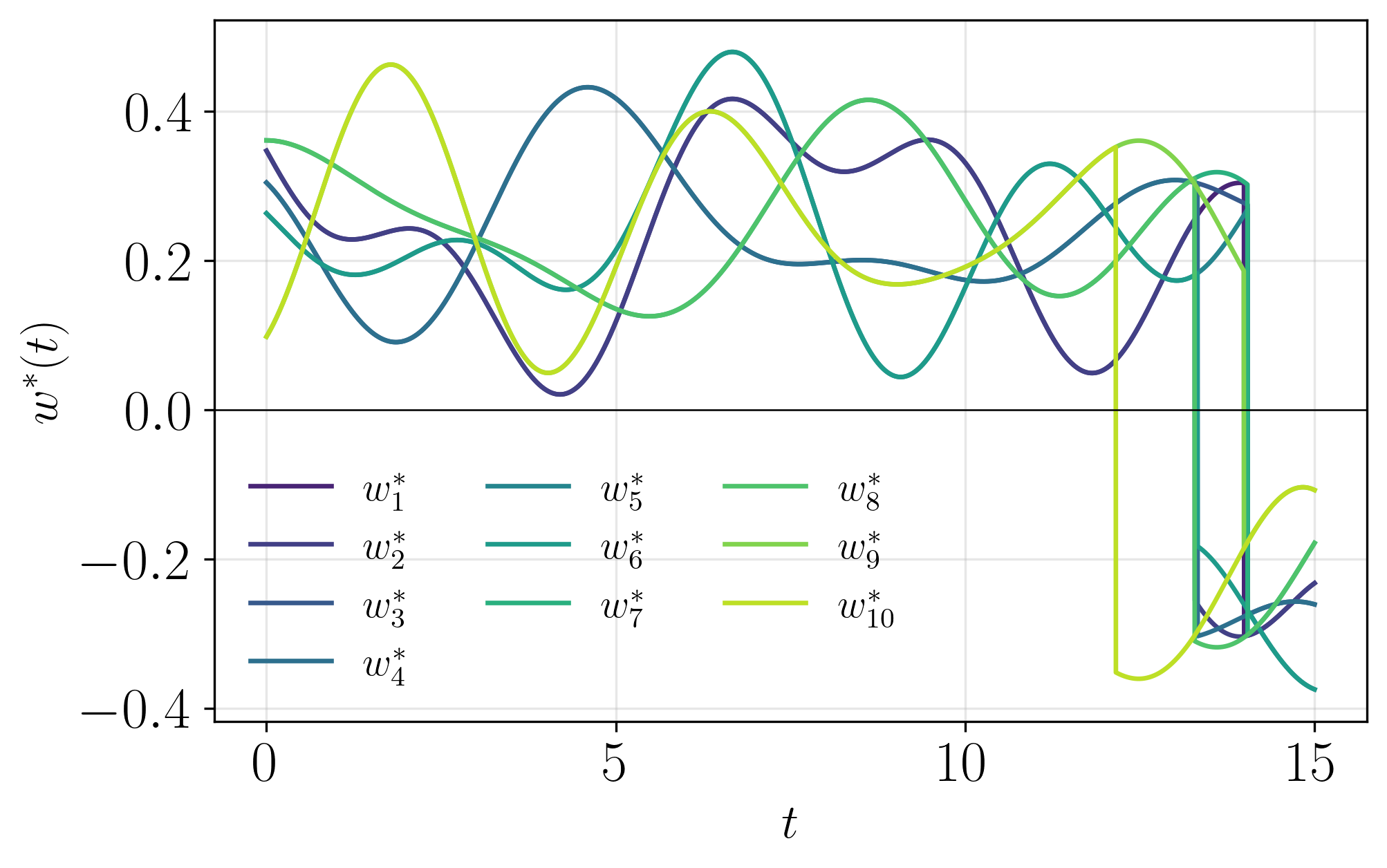}
        \caption{Measurement noise $v$ (left) and optimal disturbance $w^*$ (right) evolution over time.}
        \label{fin-hor-dist}
    \end{figure}
\subsubsection{Infinite Horizon} 
    \begin{figure}[h]
        \centering
        \includegraphics[scale=0.42]{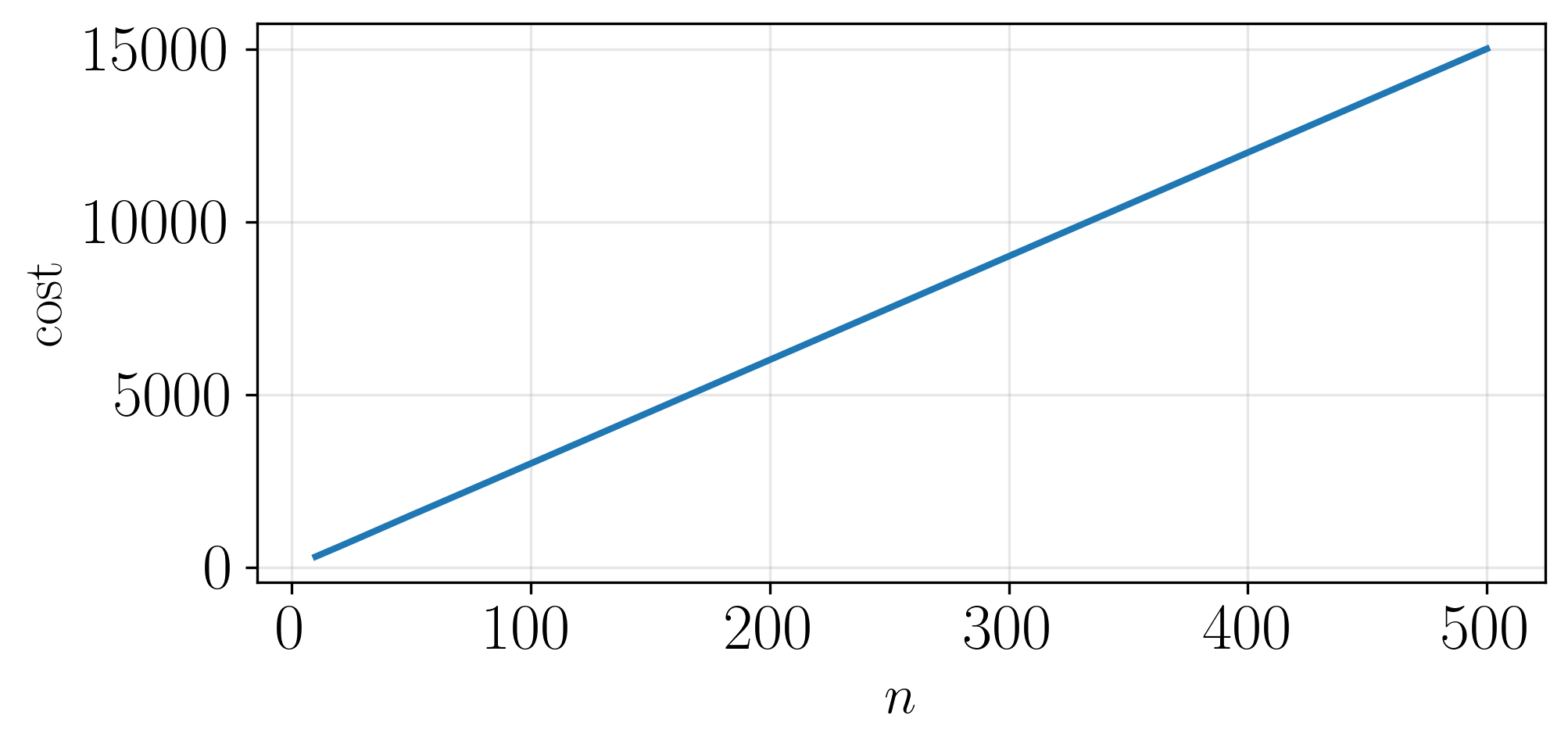}
        \caption{Cost of the stationary estimator as the network grows.}
        \label{inf-hor-2}
    \end{figure}
    \begin{figure}[h]
        \centering
        \includegraphics[scale=0.4]{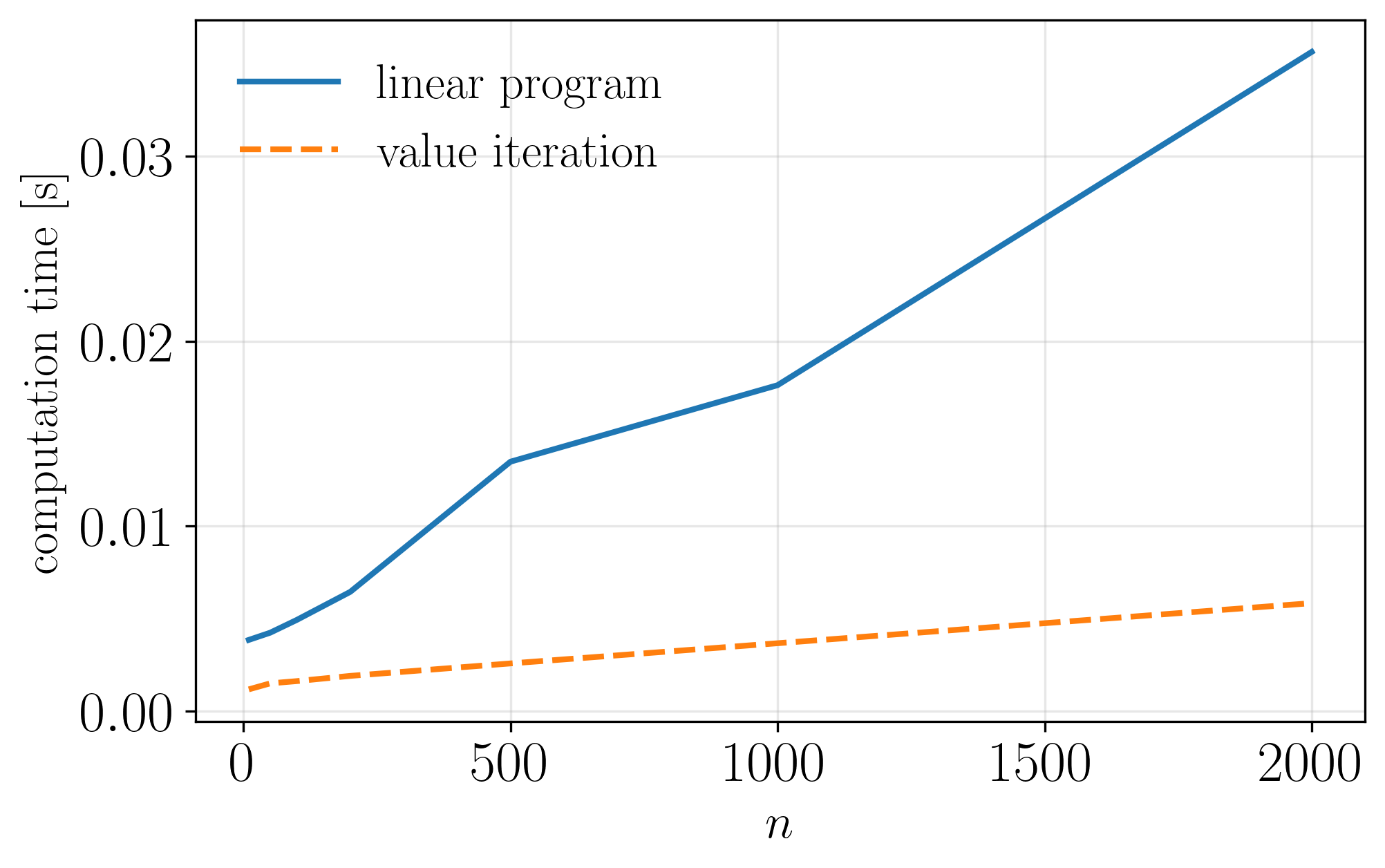}
        \caption{Time required to compute the stationary solution $q$ of~\eqref{ARE} with the linear program~\eqref{primal_LP} and the value iteration scheme~\eqref{p_seq}.}
        \label{inf-hor-3}
    \end{figure}
By Theorem~\ref{detec_apriori} with $h=20$, for which $A-HC+hI\geq 0$ and~\eqref{apriori_detect} holds, the pair $((\theta^{\top}+\psi^{\top}L)C, A-LC)$ is PZSD for every admissible gain. Hence, by Theorem~\ref{thm_q_LP1}, the minimal nonnegative solution $q$ of~\eqref{ARE} coincides with the optimizer of the linear program~\eqref{primal_LP}. For $n=10$, all entries of $\psi-q$ are negative, so the stationary gain is unique and equals $L=H$. It renders the error dynamics stable with $\alpha(A-LC)\approx-0.49$, so that under the disturbance $w=Lv$ the estimation error of the stationary estimator decays exponentially (Theorem~\ref{thm_detect_1}). Moreover, $q>0$, in agreement with Theorem~\ref{thm_detect_2}. Since $0<\psi_i<q_i$ for all $i$, the condition of Proposition~\ref{prop_static_optimal} does not hold, and the optimal gain equals $L=H$ except during the final $2.85$ time units, where its entries equal $-\eta$ (Remark~\ref{rem_optimal_inf}).

Figure~\ref{inf-hor-2} shows the cost of the stationary estimator over a window of length $T=40$. It grows approximately linearly with $n$, i.e. the cost per section does not deteriorate as the network grows. Moreover, it stays within $3\%$ of the minimum value (Remark~\ref{rem_optimal_inf}) for all sizes tested, e.g. $303$ against $296$ for $n=10$, so the stationary estimator is nearly optimal while being time-invariant. Figure~\ref{inf-hor-3} reports the time required to compute the stationary solution $q$ using the linear program~\eqref{primal_LP} and the value iteration~\eqref{p_seq}. Both scale approximately linearly with $n$, and the value iteration is consistently faster. Each of its iterations only requires a sparse matrix–vector product, whose cost grows linearly with $n$ for this network, while the number of iterations needed for convergence did not change with the network size in our experiments. The linear program, in contrast, is solved by an interior-point method, whose iterations involve the solution of linear systems with additional overhead. Both methods use sparse matrices (SciPy). The linear program is formulated with CVXPY and solved with the interior-point solver Clarabel. 
\vspace{-3mm}
\section{Conclusions}
This work introduces the $L_1$-minimum energy estimation problem for positive systems with state-dependent disturbances, and demonstrates that it admits explicit solutions through dynamic programming. Notably, the resulting optimal estimator is a Luenberger observer whose gain and optimal disturbance follow from a backward differential equation in the finite horizon, while in the infinite horizon an algebraic equation yields a stationary estimator with a static gain. We further proposed a condition under which the stationary estimator is optimal and when its associated gain stabilizes the error dynamics through positive zero-state detectability and $H$-detectability, which reflect that the gain must preserve positivity of the error in addition to being stabilizing. We further showed that the stationary solution can be computed by a linear program under appropriate detectability conditions and that the estimation error remains of finite $L_1$-norm under perturbations of finite $L_1$-norm. Since the number of unknowns of the stationary solution grows linearly with the state dimension and is associated with a sparse gain, the approach scales to large networks, as illustrated on an irrigation system.

Several directions remain open. An exciting direction of ongoing work is the joint design of a control input with the estimator, combining the present framework with the linear regulator for positive systems to obtain an output-feedback scheme with explicit solutions. A second direction concerns extending the framework to a general disturbance input matrix, and studying systematically how positive zero-state detectability and $H$-detectability relate to the detectability and controllability requirements of quadratic minimum-energy estimation. Another direction is to move beyond deterministic disturbances, either by introducing stochastic models that connect the linear-cost formulation to filtering under random noise, or by identifying other disturbance classes that retain the explicit form and linear-programming tractability exhibited here. It is also of relevance to identify applications for which the state-dependent disturbance constraints adopted here are a natural modeling choice. Finally, a worst-case formulation, in which the disturbance is chosen adversarially rather than to explain the observations, would yield a robust estimator under a linear performance measure for positive systems.
\vspace{-3mm}
\section{Acknowledgments}
The author would like to thank Prof. Anders Rantzer and Dr. Mark Jeeninga for their constructive feedback and discussions, which led to a significant improvement in the clarity and exposition of the paper.

 \bibliographystyle{abbrv} 
 \bibliography{cas-refs}
\end{document}